\theoremstyle{definition}
\newtheorem{exmp}{Example}[section]
\newcommand{\N}{\mathbb{N}}
\newtheorem{proposition}{Proposition}[section]
\newtheorem{theorem}[proposition]{Theorem}
\newtheorem{definition}[proposition]{Definition}
\newtheorem{lemma}[proposition]{Lemma}
\newtheorem{corollary}[proposition]{Corollary}
\newtheorem{remark}[proposition]{Remark}
\begin{document}
	
	\title{On open books and embeddings of smooth and contact manifolds}
	
	\subjclass{Primary: 57R40. Secondary: 57R17.}
	
	\keywords{Open book, Embedding, contact structures.}

	\author{Arijit Nath}
	\address{Indian Institute of Technology Madras}
	\email{arijit2357@gmail.com}
	
	\author{Kuldeep Saha}
	\address{Indian Institute of Science Education and Research Bhopal}
	\email{kuldeep.saha@gmail.com}
	
	\begin{abstract}
		  We discuss embedding of manifolds in the category of open books, contact manifolds and contact open books. We prove an open book version of the Haefliger--Hirsch embedding theorem by showing that every $k$-connected closed $n$-manifold ($n\geq 7$, $k < \frac{n-4}{2}$) admits an open book embedding in the trivial open book of $\mathbb{S}^{2n-k}$. We then prove that every closed manifold $M^{2n+1}$ that bounds an achiral Lefschetz fibration, admits open book embedding in the trivial open book of $\mathbb{S}^{2\lfloor\frac{3n}{2}\rfloor + 3}$. We also prove that every closed manifold $M^{2n+1}$ bounding an achiral Lefschetz fibration admits a contact structure that isocontact embeds in the standard contact structure on $\mathbb{R}^{2n+3}.$ Finally, we give various examples of contact open book embeddings of contact $(2n+1)$-manifolds in the trivial supporting open book of the standard contact structure on $\mathbb{S}^{4n+1}.$ 
	\end{abstract}
	
	\maketitle

	\section{Introduction}
	
	Embedding of manifolds is a fundamental problem in topology. In particular, embedding in $\mathbb{R}^N$ or $\mathbb{S}^N$ is a much studied problem. The Whitney embedding theorem says that every $n$-manifold embeds in $\mathbb{S}^{2n}$. The works of Haefliger--Hirsch \cite{HH} and Hirsch \cite{Hi} showed that every orientable $n$-manifold can be embedded in $\mathbb{S}^{2n-1}$. Haefliger--Hirsch \cite{HH} further generalized this to prove that every $k$-connected oriented $n$-manifold embeds in $\mathbb{S}^{2n-k-1}$. After Nash \cite{N} solved the $C^1$-isometric embedding problem, people started focusing on embedding problems with a given geometric structure. In the present article, we will be discussing embedding problems in three categories of geometric/topological structures: open books, contact structures and contact open books.
	
	An open book decomposition on a closed $n$-manifold is obtained by removing a codimension-$2$ closed submanifold and fibering the complement over $\mathbb{S}^1$. In particular, an open book is determined by a fiber $\Sigma^{n-1}$ (called \emph{page}) and the return map $\phi$ (called \emph{monodromy}) of this fibration. We denote such an open book by $\mathcal{O}b(\Sigma,\phi)$. An \emph{open book embedding} of $M_1 = \mathcal{O}b(\Sigma_1,\phi_1)$ in $M_2 = \mathcal{O}b(\Sigma_2,\phi_2)$ is an embedding $f$ of $M_1$ in $M_2$ such that the restriction of $\mathcal{O}b(\Sigma_2,\phi_2)$ on $f(M_1)$ induces the open book $\mathcal{O}b(\Sigma_1,\phi_1)$. In the present article we will be considering the case : $M_2 = \mathbb{S}^{N+1} = \mathcal{O}b(\mathbb{D}^{N}, id)$ for some $N$. This is called the trivial open book of $\mathbb{S}^{N+1}$. Clearly, an open book embedding in the trivial open book gives a stronger form of embedding than just smooth embedding.
	
	Combining the construction of open book decomposition due to Winkelnkemper \cite{Wi} and embedding and isotopy results of Haefliger--Hirsch \cite{HH}, we get the following open book version of the Haefliger--Hirsch embedding theorem.
	
	\begin{theorem}\label{open book thm 1}
		Let $n \geq 7$ and $0 \leq k < \frac{1}{2}(n-4)$. Then, every $k$-connected closed manifold $M^n$ admits an open book embedding in $\mathbb{S}^{2n-k} = \mathcal{O}b(\mathbb{D}^{2n-k-1},id)$.
	\end{theorem}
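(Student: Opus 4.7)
The plan is to combine Winkelnkemper's open book construction with the embedding and isotopy theorems of Haefliger--Hirsch. First, applying Winkelnkemper's construction to the $k$-connected closed manifold $M^n$ (with $n\ge 7$), I would write
$$ M^n \;=\; \mathcal{O}b(\Sigma^{n-1}, \phi), $$
where $\Sigma$ is a compact $(n-1)$-manifold with boundary and $\phi$ is a self-diffeomorphism of $\Sigma$ equal to the identity on a collar of $\partial \Sigma$. The hypotheses $n\ge 7$ and the $k$-connectivity of $M$ let one arrange, via the handle-theoretic form of Winkelnkemper's argument, that $\Sigma$ is built from $\mathbb{D}^{n-1}$ by attaching handles of index strictly greater than $k$; in particular, the page $\Sigma$ is itself $k$-connected.

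Next, observe that the hypothesis $k<(n-4)/2$ is equivalent to $(n-1)>2k+3$, which is precisely the metastable condition for a $k$-connected $(n-1)$-manifold to embed in $\mathbb{R}^{2n-k-1}$. Hence Haefliger--Hirsch produces a proper smooth embedding
$$ F_0 : (\Sigma, \partial \Sigma) \;\hookrightarrow\; (\mathbb{D}^{2n-k-1}, \mathbb{S}^{2n-k-2}) $$
with a standard product collar of $\partial\Sigma$ inside the boundary sphere. In the same metastable range, the Haefliger--Hirsch isotopy theorem asserts that any two proper smooth embeddings of $\Sigma$ into $\mathbb{D}^{2n-k-1}$ agreeing on $\partial\Sigma$ are isotopic rel boundary. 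Applying this to the pair $F_0$ and $F_0\circ \phi^{-1}$ (which coincide on $\partial\Sigma$ since $\phi$ fixes a collar there) yields a smooth family of proper embeddings $\{F_t : \Sigma \hookrightarrow \mathbb{D}^{2n-k-1}\}_{t\in[0,1]}$ with $F_1 = F_0\circ \phi^{-1}$ and $F_t|_{\partial\Sigma}=F_0|_{\partial\Sigma}$ for all $t$.

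To assemble the open book embedding, I would decompose $\mathcal{O}b(\Sigma,\phi)$ as the mapping torus $\Sigma\times[0,1]/(x,1)\sim(\phi(x),0)$ glued to the binding neighborhood $\partial\Sigma\times \mathbb{D}^2$. On the mapping torus piece, the formula $(x,t)\mapsto (F_t(x),t)\in \mathbb{D}^{2n-k-1}\times \mathbb{S}^1$ is well-defined and smoothly embedded thanks to the choice of $F_t$. This extends over the binding collar using the product structure inherited from the page embedding, producing the required open book embedding
$$ \mathcal{O}b(\Sigma,\phi) \;\hookrightarrow\; \mathcal{O}b(\mathbb{D}^{2n-k-1}, \mathrm{id}) \;=\; \mathbb{S}^{2n-k}. $$

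The main obstacle I anticipate is the first step: guaranteeing that $M$ admits an open book whose page is $k$-connected. Winkelnkemper's original construction provides this directly when $n$ is odd and $M$ is simply connected, but in the general $k$-connected setting (and especially when $n$ is even) one must verify that the signature and Euler-characteristic obstructions appearing in the construction vanish under the $k$-connectivity hypothesis, and that the resulting handle decomposition can be chosen to have no handles of index $\le k$. A secondary technical issue is formulating the Haefliger--Hirsch isotopy theorem in its relative (rel boundary with prescribed collar) form; this reduces to the absolute statement after passing to the double of $\Sigma$ and invoking uniqueness of tubular neighborhoods of $\partial \Sigma$ inside $\mathbb{S}^{2n-k-2}$.
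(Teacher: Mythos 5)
Your proposal follows essentially the same route as the paper: Winkelnkemper's construction gives an open book with $k$-connected page, the relative Haefliger--Hirsch embedding and isotopy theorems give a proper embedding of the page in $(\mathbb{D}^{2n-k-1},\partial\mathbb{D}^{2n-k-1})$ together with an isotopy rel boundary from that embedding to its composition with the monodromy, and the isotopy extension theorem assembles this into an open book embedding into $\mathcal{O}b(\mathbb{D}^{2n-k-1},id)=\mathbb{S}^{2n-k}$. The technical points you flag (the rel-boundary form of Haefliger--Hirsch via doubling, and the $k$-connectivity of the Winkelnkemper page) are exactly what the paper supplies in Lemma \ref{relative HH isotopy} and Lemma \ref{le wink}; the only slip is notational, since with your mapping-torus convention $(x,1)\sim(\phi(x),0)$ the second embedding should be $F_0\circ\phi$ rather than $F_0\circ\phi^{-1}$.
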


   Next we prove a smooth open book embedding theorem for manifolds bounding an \emph{achiral Lefschetz fibration}. An achiral Lefschetz fibration is just like a Lefschetz fibration over a disk $D^2$ with Weinstein fibers, where the monodromy can be a composition of both positive and negative Dehn--Seidel twists. For precise definitions see Section \ref{weinstein}.
   
   \begin{theorem} \label{top sein embed}
   	Let $M^{2n+1}$ be a closed manifold that bounds an achiral Lefschetz fibration. Then, $M^{2n+1}$ admits open book embedding in $\mathcal{O}b(\mathbb{D}^{2\lfloor \frac{3n}{2} \rfloor + 1},id) = \mathbb{S}^{2\lfloor\frac{3n}{2}\rfloor + 3}$.
   \end{theorem}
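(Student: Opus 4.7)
The plan is to convert the achiral Lefschetz fibration (ALF) structure into an open book decomposition of $M$ with a Weinstein page, embed the page in a ball of the correct dimension, and then extend the Dehn--Seidel twists that constitute the monodromy.

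\emph{Step 1: Extract the boundary open book.} If $\pi \colon X^{2n+2}\to D^2$ is the ALF with Weinstein fiber $F^{2n}$ and $\partial X=M$, the standard construction equips $M$ with an open book $\mathcal{O}b(F,\phi)$ whose monodromy is a product
\[
\phi \;=\; \tau_{L_1}^{\epsilon_1}\circ\cdots\circ\tau_{L_m}^{\epsilon_m},
\]
where each $L_i\cong S^n\subset F$ is a Lagrangian vanishing sphere associated with a Lefschetz critical point and $\epsilon_i=\pm 1$. It therefore suffices to produce an embedding $F\hookrightarrow \mathbb{D}^{N}$, with $N=2\lfloor 3n/2\rfloor+1$, together with a diffeomorphism $\Phi\colon \mathbb{D}^{N}\to\mathbb{D}^{N}$ extending $\phi$ on $F$ and isotopic to $\mathrm{id}$ rel $\partial\mathbb{D}^{N}$; then $\mathcal{O}b(\mathbb{D}^{N},\Phi)=\mathcal{O}b(\mathbb{D}^{N},\mathrm{id})=\mathbb{S}^{2\lfloor 3n/2\rfloor+3}$, realising the desired open book embedding.

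\emph{Step 2: Embed the Weinstein page.} Since $F$ is a Weinstein domain of dimension $2n$, it admits a Morse function with critical points of index at most $n$, so it is homotopy equivalent to an $n$-dimensional CW complex. Applying a Haefliger--Hirsch type embedding result handle-by-handle (exactly as in the proof of Theorem \ref{open book thm 1}) produces an embedding $F\hookrightarrow\mathbb{D}^{N}$ in which the attaching and belt spheres of the $n$-handles, and in particular the vanishing spheres $L_i$, can be arranged to be unknotted. The codimension $N-2n$ is precisely $n$ (for odd $n$) or $n+1$ (for even $n$), which, by general position, lets us choose pairwise disjoint embedded $(n+1)$-disks $\Delta_i\subset\mathbb{D}^{N}$ with $\Delta_i\cap F=\partial\Delta_i=L_i$ and interiors in the complement of $F$.

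\emph{Step 3: Extend each Dehn--Seidel twist.} The Dehn--Seidel twist $\tau_{L_i}$ is supported in a tubular neighbourhood $\nu(L_i)\subset F$ diffeomorphic to a neighbourhood of the zero section in $T^*S^n$. Using the disk $\Delta_i$ as a null-homotopy for $L_i$ in $\mathbb{D}^{N}$, I would identify a tubular neighbourhood of $L_i\cup\Delta_i$ inside $\mathbb{D}^{N}$ with a standard ball in which $L_i$ sits as an unknotted $n$-sphere bounding $\Delta_i$. In that standard model the generalised Dehn twist can be realised as a $2\pi$-rotation in the normal directions to $\Delta_i$; because $\Delta_i$ is a genuine embedded disk, this rotation is isotopic, rel the boundary of the model ball, to the identity. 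This yields a diffeomorphism $\widetilde{\tau_{L_i}}\colon\mathbb{D}^{N}\to\mathbb{D}^{N}$ restricting to $\tau_{L_i}$ on $F$ and isotopic to $\mathrm{id}$ rel $\partial\mathbb{D}^{N}$. Setting $\Phi=\widetilde{\tau_{L_1}}^{\epsilon_1}\circ\cdots\circ\widetilde{\tau_{L_m}}^{\epsilon_m}$ completes the construction.

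\emph{Main obstacle.} The delicate step is Step 3: producing, for every vanishing sphere, an \emph{ambient} extension of the Dehn--Seidel twist which is globally isotopic to the identity rel boundary, while keeping the various extensions compatible (disjoint supports, or at least well-behaved under composition). This is where the dimension $2\lfloor 3n/2\rfloor+1$ enters essentially: we need codimension large enough to (i) embed the handle decomposition of $F$ with vanishing spheres bounding pairwise disjoint disks in the complement and (ii) identify neighbourhoods of $L_i\cup\Delta_i$ with standard balls so that the model-twist is null-isotopic. The rest of the argument is a routine gluing of the extended monodromy diffeomorphism to the trivial open book on $\mathbb{S}^{2\lfloor 3n/2\rfloor+3}$.
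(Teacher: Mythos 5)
Your Steps 1 and 2 (reading off the boundary open book and embedding the Weinstein page $F^{2n}$ into a disk via its $n$-handle decomposition) agree with the paper, which does this via Lemma \ref{embedding of n-handlebody}. But Step 3 — the heart of the matter — has two concrete problems. First, the general position claim is false: to push the interior of an $(n+1)$-disk $\Delta_i$ off the $2n$-dimensional page $F$ inside $\mathbb{D}^N$ you need $(n+1)+2n<N$, i.e.\ $N\geq 3n+2$, whereas $N=2\lfloor 3n/2\rfloor+1\leq 3n+1$ (and even the ambient disk $\mathbb{D}^{2\lfloor 3n/2\rfloor+2}$ actually used in the paper's Lemma \ref{top sein lemma} only satisfies this for $n$ even). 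So the disks $\Delta_i$ with interiors in the complement of $F$ are not available by transversality. Second, even granting the $\Delta_i$, a ``$2\pi$-rotation in the normal directions to $\Delta_i$'' fixes $\Delta_i$, hence fixes $L_i$ pointwise; but the Dehn--Seidel twist restricts to the antipodal map on $L_i$. So the ambient diffeomorphism you construct cannot restrict to $\tau_{L_i}$ on the page, and you give no argument that it induces anything isotopic to $\tau_{L_i}$ in $\mathrm{Diff}(F,\partial F)$ while simultaneously being isotopic to the identity rel $\partial\mathbb{D}^N$ — these two requirements are exactly where the content of the theorem lies.

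The paper's route around this is worth comparing with. Instead of capping $L_i$ with a disk, it uses the Lagrangian neighborhood $N(L_i)\cong DT^*\mathbb{S}^n$ and the standard inclusion $DT^*\mathbb{S}^n\hookrightarrow DT^*\mathbb{S}^{n+k}$ induced by $\mathbb{S}^n\subset\mathbb{S}^{n+k}$, under which the Dehn--Seidel twist of the big cotangent bundle restricts to that of the small one. Since $DT^*\mathbb{S}^{n+k}$ is the page of the standard open book of $\mathbb{S}^{2(n+k)+1}$ with monodromy $\tau_{n+k}$, rotating that open book gives an ambient isotopy starting at the identity whose time-one map realizes the twist on the page; damping this rotation in a collar $\mathbb{S}^N\times[-1,2]\subset\mathbb{D}^{N+1}$ makes it rel boundary. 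Haefliger--Hirsch isotopy uniqueness (Lemma \ref{relative HH isotopy}) is then used to carry the given embedding of $N(L_i)$ into this standard position and back. If you want to salvage your approach, you would need to replace ``disk in the complement'' by this kind of standard local model (essentially a Lefschetz thimble with its ambient monodromy flow), which is what the extra $\lfloor n/2\rfloor$ of codimension is really being spent on.
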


   In fact, every smooth $(2n+1)$-manifold $M^{2n+1}$ bounding an achiral Lefschetz fibration can be embedded in a much lower codimension in both the smooth and contact category. For the definition of isocontact embedding see Section \ref{isocontact}.

   \begin{theorem} \label{codim 2 embedding}
   	
   	Let $M^{2n+1}$ be a closed oriented manifold that bounds an achiral Lefschetz fibration.
   	
   	\begin{enumerate}
   		
   		\item There exists a contact structure $\xi_0$ on $M^{2n+1}$ such that $(M^{2n+1},\xi_0)$ isocontact embeds in $(\mathbb{R}^{2n+3},\xi_{std}).$
   		
   		\item If $M^{2n+1}$ is $(n-1)$-connected, then for $n \equiv 4 \pmod 8$, every contact structure $(M^{2n+1},\xi)$ isocontact embeds in $(\mathbb{R}^{2n+3},\xi_{std}).$ 
   		
   	\end{enumerate}

   \end{theorem}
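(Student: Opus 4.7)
The overall strategy is to leverage the achiral Lefschetz fibration $\pi:W^{2n+2}\to D^2$ with $\partial W=M$ as a scaffold for the embedding. The induced boundary open book on $M$ has page the Weinstein manifold $F^{2n}$ and monodromy $\phi=\tau_{c_k}^{\epsilon_k}\cdots\tau_{c_1}^{\epsilon_1}$, a product of (generalized) Dehn--Seidel twists along Lagrangian vanishing $n$-spheres $c_i\subset F$ with signs $\epsilon_i\in\{\pm 1\}$. I would construct the codimension-two isocontact embedding at the level of abstract open books: embed $F$ as a symplectic submanifold of codimension two inside $(\mathbb{C}^{n+1},\omega_{std})$, extend each $c_i$ to a Lagrangian $n$-sphere $\widetilde{c}_i\subset\mathbb{C}^{n+1}$, and set $\widetilde{\phi}:=\tau_{\widetilde{c}_k}^{\epsilon_k}\cdots\tau_{\widetilde{c}_1}^{\epsilon_1}\in\mathrm{Symp}_c(\mathbb{C}^{n+1})$, whose restriction to $F$ is $\phi$. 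The inclusion then induces an open-book embedding $\mathcal{O}b(F,\phi)\hookrightarrow\mathcal{O}b(\mathbb{C}^{n+1},\widetilde{\phi})$.

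For part (1), the embedding of $F$ as a Weinstein subdomain of codimension two inside $\mathbb{C}^{n+1}$ is available via the stable Weinstein embedding theorem of Cieliebak--Eliashberg, or more directly by applying Gromov's $h$-principle for symplectic embeddings once one knows that $F$ embeds smoothly with trivial symplectic normal bundle. The Lagrangian caps $\widetilde{c}_i$ are produced using a symplectic tubular neighborhood of $F$: locally the pair $(F,\mathbb{C}^{n+1})$ is modelled on $(T^{*}S^{n},\, T^{*}S^{n}\oplus\underline{\mathbb{C}})$ near $c_i$, and capping $c_i$ by the real locus of a transverse Darboux ball gives $\widetilde{c}_i$. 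Since each $\widetilde{c}_i$ bounds a Lagrangian disk in $\mathbb{C}^{n+1}$, $\widetilde{\phi}$ is smoothly isotopic to the identity and $\mathcal{O}b(\mathbb{C}^{n+1},\widetilde{\phi})\cong\mathbb{R}^{2n+3}$. The contact structure supported by this open book is Weinstein-fillable by a deformation of $\mathbb{C}^{n+2}$, hence isotopic to $\xi_{std}$, and the contact structure $\xi_0$ on $M$ is the one supported by $\mathcal{O}b(F,\phi)$, yielding the isocontact embedding $(M,\xi_0)\hookrightarrow(\mathbb{R}^{2n+3},\xi_{std})$.

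Part (2) would follow from a classification argument for contact structures on $(n-1)$-connected $(2n+1)$-manifolds. Under the connectivity hypothesis, obstructions to homotoping two almost contact structures lie in $H^{i}(M;\pi_{i-1}(U/O))$ for $i\leq 2n+1$, and Bott periodicity identifies these homotopy groups; the specific condition $n\equiv 4\pmod{8}$ forces all the relevant groups to vanish, so any two almost contact structures on $M$ are homotopic. Combined with an $h$-principle (Borman--Eliashberg--Murphy in the overtwisted regime, or Eliashberg's classification of Weinstein structures after cancelling the negative Lefschetz handles against stabilizing positive ones) this promotes almost contact homotopy to genuine contact isotopy, so every contact structure $\xi$ on $M$ is isotopic to $\xi_0$ and inherits the isocontact embedding of part (1).

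The main obstacle is the Lagrangian-extension step: one must produce $\widetilde{c}_i\subset\mathbb{C}^{n+1}$ so that the compactly supported ambient Dehn--Seidel twists $\tau_{\widetilde{c}_i}$ restrict on $F$ to the given twists $\tau_{c_i}$, and so that distinct caps remain disjoint on overlapping pages of the ambient open book. Both issues reduce to a model computation inside the normal bundle $T^{*}S^{n}\oplus\underline{\mathbb{C}}$ together with a disjointness isotopy among the caps; once settled, the remainder of the argument is bookkeeping assembled from standard open-book and contact-topology inputs.
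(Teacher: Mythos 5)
Your strategy for part (1) breaks down at the step you yourself flag as the main obstacle, and it breaks down for a reason that cannot be repaired: there are no closed Lagrangian spheres in $(\mathbb{C}^{n+1},\omega_{std})$ along which to perform the ambient Dehn--Seidel twists. First, dimensionally, a Dehn--Seidel twist on the $(2n+2)$-dimensional page $\mathbb{C}^{n+1}$ must be modelled on a Lagrangian $(n+1)$-sphere, not the $n$-spheres $\widetilde{c}_i$ you describe; but even after correcting this, Gromov's theorem on the non-existence of closed exact Lagrangian submanifolds of standard $\mathbb{C}^{m}$ (and spheres of dimension $\geq 2$ are automatically exact) rules out any such $\widetilde{c}_i$, so the ambient monodromy $\widetilde{\phi}$ cannot be built as a product of genuine Dehn--Seidel twists. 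This is exactly why the paper does \emph{not} attempt a codimension-two contact \emph{open book} embedding: its open book embedding result for such $M$ (Theorem \ref{top sein embed}) lands only in $\mathbb{S}^{2\lfloor 3n/2\rfloor+3}$ and realizes the twists merely by smooth ambient isotopies. For the codimension-two statement the paper argues entirely differently: $W^{2n+2}$ has handles of index $\leq n+1$, so it embeds smoothly in $\mathbb{R}^{2n+3}$ (Lemma \ref{embedding of n-handlebody}), giving $M\hookrightarrow\mathbb{R}^{2n+3}$ with trivial normal bundle; obstruction theory plus Gromov's existence $h$-principle for contact structures on open manifolds produces a contact structure $\eta$ on $\mathbb{R}^{2n+3}$ containing $(M,\xi_0)$; and the Casals--Pancholi--Presas codimension-two $h$-principle converts the resulting formal contact embedding into an isocontact embedding into $(\mathbb{R}^{2n+3},\xi_{std})$. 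Your proposal never reaches, and does not mention, this last $h$-principle, which is the essential input for getting $\xi_{std}$ rather than some unidentified contact structure on $\mathbb{R}^{2n+3}$.

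Part (2) of your proposal also contains a genuine error: you propose to show that all almost contact structures on $M$ are homotopic and then ``promote almost contact homotopy to genuine contact isotopy.'' That promotion is false in general --- it holds only for overtwisted structures (Borman--Eliashberg--Murphy), and the given $\xi$ is arbitrary; if it were true, part (2) would follow trivially from part (1) with no connectivity or congruence hypothesis doing any work at the embedding stage. The paper instead uses the hypotheses where they are actually needed: it takes the given $\xi$, puts the contact form $\alpha+r^{2}d\theta$ on a tubular neighborhood $M\times\mathbb{D}^{2}\subset\mathbb{R}^{2n+3}$, and extends it as an almost contact structure over $\mathbb{R}^{2n+3}$; the obstructions live in $H^{i}(M;\pi_{i}(\Gamma_{n+2}))$ with $\pi_{i}(\Gamma_{n+2})\cong\pi_{i+1}(SO)$ (note your coefficient groups $\pi_{i-1}(U/O)$ are not the right ones), and the $(n-1)$-connectivity of $M$ together with $n\equiv 4\pmod 8$ kills them. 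Gromov's $h$-principle rel the neighborhood of $M$ then yields an isocontact embedding of $(M,\xi)$ itself into some $(\mathbb{R}^{2n+3},\eta)$, and one finishes as in part (1). So the congruence condition enters through the extension problem for the specific $\xi$, not through a classification of contact structures on $M$.
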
 
   
   Giroux and Pardon \cite{GP} has proved that every Weinstein or Stein manifold admits an achiral Lefschetz fibration structure. Therefore, if $M^{2n+1}$ does not admit embedding in $\mathbb{R}^{2n+3}$, then $M^{2n+1}$ can not admit any Stein or Weinstein filling. For example, spaces like $\mathbb{R}P^{2n+1}$, $S^{2m+1} \times \mathbb{C}P^n$, for $n$ large, can not admit Stein or Weinstein filling. Moreover, given such an $M^{2n+1}$, if $\mathcal{O}b(\Sigma_M,\phi_M)$ is an open book supporting a contact structure $\xi_M$ on $M$. Then, $\phi_M$ cannot be written as a product of Dehn--Seidel twists. 

\

We also prove some \emph{contact open book embedding} results. The trivial contact open book of $\mathbb{S}^{2N+1}$, supporting the standard contact structure $\xi_{std}$, has page the standard symplectic disk $(\mathbb{D}^{2N},d\lambda_0)$ and monodromy identity, and is denoted by $\mathcal{O}b(\mathbb{D}^{2N},d\lambda_0,id)$. By the $h$-principles of Gromov \cite{Gr}, every contact $(2n+1)$-manifold contact open book embeds in $\mathcal{O}b(\mathbb{D}^{4n+2},d\lambda_0,id)$. We give some examples of contact open book embeddings of simply connected contact $(2n+1)$-manifolds in the trivial contact open book of $(\mathbb{S}^{4n+1},\xi_{std})$. Note that Kasuya \cite{Ka} has already proved that every $2$-connected contact $(2n+1)$-manifold isocontact embeds in $(\mathbb{S}^{4n+1},\xi_{std})$. First we show the following.

\begin{theorem}\label{contact open book thm}
	Let $(V^{2n},\partial V^{2n},d\lambda_V)$ be a simply connected exact symplectic manifold. If $(V^{2n},\partial V^{2n}$\\$,d\lambda_V)$ has a proper isosymplectic embedding in $(\mathbb{D}^{4n},\partial \mathbb{D}^{4n},d\lambda_0)$, then any open book $\mathcal{O}b(V^{2n},d\lambda_V,\phi)$ contact open book embeds in $\mathcal{O}b(\mathbb{D}^{4n},d\lambda_0,id)$ for $n \geq 2$.

\end{theorem}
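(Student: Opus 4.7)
The plan is to build the contact open book embedding at the level of the abstract open book decomposition and then verify the contact compatibility. Recall that $\mathcal{O}b(V,d\lambda_V,\phi)$ is the union of the Liouville mapping torus $V_\phi$, obtained from $V \times [0,2\pi]$ by identifying $(x,2\pi)$ with $(\phi(x),0)$, together with the binding region $\partial V \times \mathbb{D}^2$; the supported contact form is assembled from $\lambda_V$ and the angular coordinate on $\mathbb{S}^1$. Similarly $\mathcal{O}b(\mathbb{D}^{4n},d\lambda_0,\mathrm{id}) = \mathbb{S}^{4n+1}$ decomposes as the trivial mapping torus $\mathbb{D}^{4n} \times \mathbb{S}^1$ glued to $\partial \mathbb{D}^{4n} \times \mathbb{D}^2$.

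The heart of the proof is to construct a smooth family $E_t\colon V \hookrightarrow \mathbb{D}^{4n}$, $t \in [0,2\pi]$, of proper isosymplectic embeddings satisfying $E_0 = e$ and $E_{2\pi} = e \circ \phi$, where $e$ is the given proper isosymplectic embedding. Granted such a family, the formula $\tilde F(x,t) = (E_t(x),t)$ defines a map $V \times [0,2\pi] \to \mathbb{D}^{4n} \times [0,2\pi]$ that descends to an embedding of mapping tori $V_\phi \hookrightarrow \mathbb{D}^{4n} \times \mathbb{S}^1$, because the identification $(x,2\pi) \sim (\phi(x),0)$ on the source corresponds, via $E_{2\pi} = E_0 \circ \phi$, to the trivial identification on the target. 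Properness of $e$ forces $E_t(\partial V) \subset \partial \mathbb{D}^{4n}$ for every $t$, so $\tilde F$ extends across the binding via $e|_{\partial V} \times \mathrm{id}_{\mathbb{D}^2}$ to a smooth open book embedding $F\colon \mathcal{O}b(V,d\lambda_V,\phi) \hookrightarrow \mathbb{S}^{4n+1}$.

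The main technical step, and the hardest part, is the existence of the isosymplectic isotopy $E_t$. Here I would invoke Gromov's h-principle for isosymplectic embeddings: the codimension $2n \geq 4$ (using $n \geq 2$) is subcritical, so the space of proper isosymplectic embeddings of $(V,d\lambda_V)$ in $(\mathbb{D}^{4n},d\lambda_0)$ is weakly homotopy equivalent to its formal analogue, consisting of smooth proper embeddings paired with a symplectic bundle monomorphism $TV \to T\mathbb{D}^{4n}$ along them. It therefore suffices to exhibit a formal isotopy between $e$ and $e \circ \phi$. An underlying smooth isotopy exists by Haefliger's metastable theorem, using that $V$ is simply connected, the codimension is at least $4$, and $\mathbb{D}^{4n}$ is contractible. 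The symplectic bundle data of $e \circ \phi$ differs from that of $e$ by the automorphism $d\phi \oplus \mathrm{id}_\nu$ of $e^*T\mathbb{D}^{4n} \cong TV \oplus \nu$, where $\nu$ is the symplectic normal bundle of rank $2n$; since $\nu$ has rank equal to $\dim V$ and $V$ is simply connected, this stabilized automorphism can be homotoped to the identity through symplectic bundle automorphisms by standard obstruction theory, invoking that $\mathrm{Sp}(4n)$ is connected.

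Finally, I would verify that $F$ is actually a \emph{contact} open book embedding rather than merely a smooth one. Using the Thurston--Winkelnkemper--Giroux description of the supported contact forms as $\lambda_V + K\,dt$ on the mapping torus, together with its standard modification in a collar of the binding, the condition that $F$ pull back the ambient contact structure to the one supported by $\mathcal{O}b(V,d\lambda_V,\phi)$ reduces to the pointwise alignment of $E_t^*\lambda_0$ with $\lambda_V$ on each page slice (absorbing exact discrepancies by a Moser-type reparametrization in the $t$-direction) together with the matching of forms near $\partial V$ inherited from $e|_{\partial V}$. Once the isotopy $E_t$ has been produced, this final step is careful but essentially routine.
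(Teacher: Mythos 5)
Your proposal follows essentially the same route as the paper: produce a smooth isotopy between $e$ and $e\circ\phi$ from the relative Haefliger--Hirsch results (using simple connectivity and the large codimension), upgrade it to a formal isosymplectic isotopy, and then apply the parametric relative $h$-principle for isosymplectic embeddings to obtain a genuine family $E_t$ of proper isosymplectic embeddings joining $e$ to $e\circ\phi$. The only structural difference is cosmetic: you glue the mapping-torus embedding directly out of the family $E_t$, whereas the paper first converts the isosymplectic isotopy into an ambient symplectic isotopy $\Psi_t$ of $(\mathbb{D}^{4n},d\lambda_0)$ (an Auroux-type isotopy-extension lemma, Lemma \ref{symplectic isotopy lemma}) and then appeals to its Definition \ref{contact open book embed def} of contact open book embedding; the two constructions produce the same embedding since one may take $E_t=\Psi_t\circ e$, and the ambient-isotopy formulation makes the final contact-compatibility check (your ``Moser-type reparametrization'') automatic.

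One step of your formal argument is incorrectly justified and should be repaired. You propose to homotope the symplectic bundle \emph{automorphism} $d\phi\oplus\mathrm{id}_\nu$ of $e^*T\mathbb{D}^{4n}$ to the identity, ``invoking that $\mathrm{Sp}(4n)$ is connected.'' Connectedness only kills the obstruction over the $0$-skeleton; homotoping an automorphism of a rank-$4n$ symplectic bundle to the identity over the $2n$-complex $V$ meets obstructions in $H^{i}(V;\pi_i(\mathrm{Sp}(4n,\mathbb{R})))$, and since $\mathrm{Sp}(4n,\mathbb{R})\simeq U(2n)$ has $\pi_3\cong\mathbb{Z}$, these need not vanish (e.g.\ when $H^3(V)\neq 0$), so the automorphism statement is false in general. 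What you actually need is weaker and true: the two bundle \emph{monomorphisms} $De$ and $De\circ D\phi$ from $TV$ into $T\mathbb{D}^{4n}$ are homotopic through isosymplectic homomorphisms, because the relevant fiber is the complex Stiefel manifold $St^{\mathbb{C}}(2n,n)\simeq U(2n)/U(n)$, which is $2n$-connected, so all obstructions over the $2n$-complex $V$ vanish. This is exactly the argument the paper uses, and substituting it makes your proof go through.
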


\noindent As an application of Theorem \ref{contact open book thm}, we find various classes of contact $(2n+1)$-manifolds that admit contact open book embedding in the trivial open book of $(\mathbb{S}^{4n+1},\xi_{std})$. For instance, we show the following.

\begin{corollary} \label{corollary1}
	Let $(W^{2n},d\lambda_W)$ be a simply connected sub-critical Weinstein domain. Then $\mathcal{O}b(W^{2n},d\lambda_W,\phi_W)$ contact open book embeds in $\mathcal{O}b(\mathbb{D}^{4n},d\lambda_0,id)$, for $n \geq 3$.
\end{corollary}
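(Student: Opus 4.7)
The corollary is an application of Theorem \ref{contact open book thm} with $V = W$; the only hypothesis to verify is that the simply connected sub-critical Weinstein domain $(W^{2n}, \partial W, d\lambda_W)$ admits a proper isosymplectic embedding into $(\mathbb{D}^{4n}, \partial \mathbb{D}^{4n}, d\lambda_0)$. My plan is to combine Cieliebak's splitting theorem for sub-critical Weinstein domains with Gromov's $h$-principle for isosymplectic embeddings in large codimension.

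First I would invoke Cieliebak's splitting theorem: any sub-critical Weinstein domain of real dimension $2n$ is Weinstein-deformation equivalent to a stabilization $(W'^{2n-2} \times \mathbb{D}^2, \lambda_{W'} + \lambda_{std})$, for some Weinstein domain $W'$ of dimension $2n-2$. Up to deformation, then, it suffices to produce a proper isosymplectic embedding of $W' \times \mathbb{D}^2$ into $\mathbb{D}^{4n}$.

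Next, I would embed $W'$ smoothly and properly into $\mathbb{D}^{4n-2}$. Observe that $W'$ has the homotopy type of a simply connected CW complex of dimension at most $n-1$ (the sub-critical bound on handle indices is inherited from $W$), and we are in codimension $2n\geq 6$, so the existence of such a smooth proper embedding is immediate from Haefliger--Hirsch. I would then upgrade this to an isosymplectic embedding $W' \hookrightarrow \mathbb{D}^{4n-2}$ via Gromov's $h$-principle for isosymplectic embeddings in positive codimension, once the formal data are supplied: a symplectic bundle monomorphism $TW' \to T\mathbb{D}^{4n-2}|_{W'}$ homotopic to $df$ through bundle monomorphisms. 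Taking the product with the identity on $\mathbb{D}^2$ yields an isosymplectic embedding $W' \times \mathbb{D}^2 \hookrightarrow \mathbb{D}^{4n-2} \times \mathbb{D}^2$, and composition with the standard corner-smoothed inclusion $\mathbb{D}^{4n-2} \times \mathbb{D}^2 \hookrightarrow \mathbb{D}^{4n}$ completes the construction. Theorem \ref{contact open book thm} then delivers the required contact open book embedding.

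The main obstacle is producing the formal symplectic bundle monomorphism required to apply the $h$-principle. The hypothesis $n \geq 3$ becomes essential here: because $W'$ is simply connected of homotopy dimension at most $n-1$ and the fiber of symplectic monomorphisms $\mathbb{R}^{2n-2} \hookrightarrow \mathbb{R}^{4n-2}$ is highly connected when the codimension $2n$ is large, all obstructions in $H^*(W'; \pi_*(\text{Sp-Grassmannian}))$ vanish by dimension considerations, and a formal lift of $df$ exists. The remaining verifications — that the resulting map is proper with $\partial W$ landing in $\partial \mathbb{D}^{4n}$, and that the corner smoothing can be implemented symplectically without disturbing the Liouville structure — are routine but require some bookkeeping.
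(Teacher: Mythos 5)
Your proposal is correct and follows essentially the same route as the paper: Cieliebak's splitting theorem to write $W \cong W_0^{2n-2} \times \mathbb{D}^2$, Gromov's $h$-principle to get a proper isosymplectic embedding of the $(2n-2)$-dimensional factor into $(\mathbb{D}^{4n-2}, d\lambda_0)$, taking the product with $\mathbb{D}^2$, and then invoking Theorem \ref{contact open book thm}. Your added verification of the formal data via the connectivity of the complex Stiefel manifold is a detail the paper leaves implicit but is consistent with its argument.
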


\noindent Another such class of contact manifolds is given by the following. 

\begin{corollary} \label{corollary2}
	
	Let $(W^{2n},d\lambda_W)$ be a simply connected Weinstein domain which has the homotopy type of a CW complex that has no nonzero even dimensional cells. Then $\mathcal{O}b(W^{2n},d\lambda_W,\phi_W)$ contact open book embeds in $\mathcal{O}b(\mathbb{D}^{4n},d\lambda_0,id)$, for $n \geq 3$.
	
\end{corollary}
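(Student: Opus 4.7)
The plan is to reduce Corollary~\ref{corollary2} to Theorem~\ref{contact open book thm} by constructing a proper isosymplectic embedding of $(W^{2n}, \partial W^{2n}, d\lambda_W)$ into $(\mathbb{D}^{4n}, \partial \mathbb{D}^{4n}, d\lambda_0)$. Once such an embedding is produced, the conclusion is immediate from Theorem~\ref{contact open book thm} applied with monodromy $\phi_W$, so all the work is on the symplectic side.

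I first examine the Weinstein handle decomposition of $W$. Being Weinstein, $W^{2n}$ admits only handles of index $\leq n$, and the hypothesis of no nonzero even dimensional cells forces each handle beyond the single 0-handle to have odd index. When $n$ is even this means every handle has index $< n$, so $W$ is sub-critical and Corollary~\ref{corollary1} already yields the conclusion. The essential new content is therefore the case $n$ odd, where critical handles of (odd) index $n$ may appear alongside the sub-critical odd-index handles.

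For that case I build the embedding by induction over the handles. The 0-handle $(\mathbb{D}^{2n}, d\lambda_0)$ embeds as a standard coordinate sub-disk of $(\mathbb{D}^{4n}, d\lambda_0)$, properly on the boundary. Suppose the Weinstein sub-domain $W_j$ obtained after attaching the first $j$ handles has already been properly isosymplectically embedded. To attach the next odd-index Weinstein handle, I must realize its isotropic attaching $(k-1)$-sphere, together with its isotropic framing data, inside $\partial\mathbb{D}^{4n}\setminus W_j$, and then glue in a standard Weinstein handle neighborhood. The first task is supplied by Gromov's h-principle for isotropic embeddings in $(\mathbb{S}^{4n-1}, \xi_{std})$, applicable because $n \geq 3$ places the codimension $4n-k$ well inside the stable range; the second is the isotropic/Weinstein handle neighborhood theorem, which says the extension is determined by the attaching sphere and its framing.

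The main obstacle is verifying that at each inductive step the formal data fed into the h-principle actually exists and is compatible with the embedding already produced on $\partial W_j$. The relevant obstructions to trivializing the symplectic normal bundles live in cohomology groups of $W$ concentrated in the dimensions of its cells: simple-connectedness kills the low-degree obstructions, and the absence of nonzero even dimensional cells annihilates those that would arise from the attaching spheres of the odd-index handles. With these obstructions vanishing, the induction can be carried through, producing the required proper isosymplectic embedding and, via Theorem~\ref{contact open book thm}, the desired contact open book embedding.
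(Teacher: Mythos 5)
Your reduction to Theorem \ref{contact open book thm} is the same as the paper's, but the way you try to produce the proper isosymplectic embedding of $(W^{2n},\partial W^{2n},d\lambda_W)$ into $(\mathbb{D}^{4n},\partial\mathbb{D}^{4n},d\lambda_0)$ diverges from the paper and has genuine gaps. The first is structural: you pass from ``$W$ has the homotopy type of a CW complex with no nonzero even cells'' to ``every Weinstein handle of $W$ beyond the $0$-handle has odd index,'' and in the even-$n$ case to ``$W$ is subcritical.'' The hypothesis constrains only the homotopy type, not the Weinstein handle decomposition; a Weinstein domain can carry even-index critical points sitting in smoothly cancelling pairs that need not cancel as Weinstein handles, so neither implication is justified. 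In particular the appeal to Corollary \ref{corollary1} for $n$ even does not go through as stated, and the entire handle-by-handle induction rests on a handle structure you have not shown exists.

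The second gap is that the inductive step is asserted rather than proved. Attaching the next Weinstein $k$-handle embeddedly requires (i) realizing the attaching isotropic $(k-1)$-sphere in the contact boundary in the correct isotopy class \emph{relative to the already embedded $\partial W_j$}, and (ii) matching the isotropic framing and the trivialization of the conformal symplectic normal bundle prescribed by the abstract handle data, so that the resulting domain is the given $(W,d\lambda_W)$ and the embedding restricts to the one already built. You invoke ``obstructions concentrated in the dimensions of the cells'' without identifying the obstruction groups or computing them, so it is never visible where the hypothesis ``no nonzero even cells'' actually enters. The paper isolates exactly this point differently: it shows $(TW,J_W)$ is complex trivial because the obstructions lie in $H^i(W;\pi_{i-1}U)$, and Bott periodicity ($\pi_{i-1}U=0$ for $i$ odd) together with $H^i(W)=0$ for $i$ even nonzero kills them all; triviality forces the relative Segre class to vanish, so by the Whitney sum formula and Lemma \ref{intersection number} a generic isosymplectic immersion from Theorem \ref{gromov h-principle immersion} has zero self-intersection number and can be upgraded to an embedding via Lemma \ref{symp embed lemma} and Theorem \ref{gromov h-principle embedding}. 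Your argument needs an analogous precise bundle computation (and a correct treatment of the handle structure) before it closes.
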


\noindent For example, let $n = 2m+1$ and say $(W^{2n},d\lambda_W)$ is obtained by plumbing odd dimensional unit cotangent bundles $DT^*\mathbb{S}^{2m+1}$ with their canonical symplectic forms. Then, for any relative symplectomorphism $\phi$ of $(W,d\lambda_0)$, $\mathcal{O}b(W^{2n},d\lambda_W,\phi)$ contact open book embeds in $\mathcal{O}b(\mathbb{D}^{4n},d\lambda_0, id)$.

 In recent times, some explicit constructions of smooth and contact open book embedding were given in \cite{EF},\cite{EL},\cite{PPS}, \cite{S} and \cite{S2}. While much is known about contact open book embeddings of $3$-manifolds, similar problems for higher dimensional contact manifolds are yet to be studied in full generality.

 \subsection{Acknowledgment} We thank Dishant M. Pancholi for his encouragement and for suggesting the problem of open book embedding. We also thank John B. Etnyre for many helpful comments and suggestions. The first author is supported by the CSIR, India (Fellowship Ref. no. 09/084(0688)/2016-EMR-I).

  \section{preliminaries}
   
  \subsection{Open books}\label{open book def}
   An open book is a decomposition of a manifold into a trivial $2$-disk bundle over a co-dimension $2$ submanifold and a mapping torus.
   
   \begin{definition}[Open book decomposition]\label{def: op book}
   
   An open book decomposition of a closed oriented  manifold $M$ consists of a co-dimension $2$ oriented submanifold $B$ with a trivial normal bundle in $M$ and a fibration $ \pi: M \setminus B \rightarrow \mathbb{S}^1$ such that $\pi^{-1}(\theta)$ is an interior of a co-dimension $1$ submanifold $N_{\theta}$ and $\partial N_{\theta} = B$, for all $\theta \in \mathbb{S}^1.$ The submanifold $B$ is called the \emph{binding} and $\Sigma = \overline{\N_{\theta}}$ is called the \emph{page}. 
   \end{definition}

  	  \noindent One can see from Definition \ref{def: op book} that $\Sigma$ is a manifold with nonempty boundary and the \emph{monodromy} of the fibration $\pi$, denoted by $\phi$, is a diffeomorphism of $\Sigma$ that is identity on a collar neighborhood of the boundary $\partial \Sigma$. This shows that $M$ can be seen as $\mathcal{MT}(\Sigma, \phi) \cup_{id} \partial \Sigma \times \mathbb{D}^2$, where $\mathcal{MT}(\Sigma, \phi)$ is the mapping torus of $\phi$ and $id$ denotes the identity map of $\partial \Sigma \times \mathbb{S}^1.$ Such a decomposition of $M$, known as \emph{abstract open book}, is equivalent to the above definition of open book and is denoted by $\mathcal{O}b(\Sigma, \phi)$. We will be using this definition of open book throughout the article. Following are some importand facts about open books.

   \begin{enumerate}

   	\item The page $\Sigma$ and the isotopy class of the monodromy $\phi$ uniquely determines the diffeomorphism type of $M$. In particular, we can assume that $\phi$ belongs to the relative diffeotopy group $Diffeo(\Sigma,\partial \Sigma)$.
   	
   	\item If $\phi_1,\phi_2 \in Diffeo(\Sigma,\partial \Sigma)$, then $\mathcal{O}b(V, \phi_1 \circ \phi_2) \cong \mathcal{O}b(V, \phi_2 \circ \phi_1)$.
   	
   	\item In general, two open books $\mathcal{O}b(\Sigma_1,\phi_1)$ and $\mathcal{O}b(\Sigma_2,\phi_2)$ are called \emph{equivalent} if there exists a proper diffeomorphism $h : (\Sigma_1, \partial \Sigma_1) \rightarrow (\Sigma_2, \partial \Sigma_2)$ such that $h \circ \phi_2 = \phi_1 \circ h$.
   	
   \end{enumerate}

   We now define the notion of embedding in the category of open books. Let $M^n$ and $V^N$ be two manifolds admitting open book decompositions. Assume that $N \geq n+1$. 
   
   \begin{definition}[Open book embedding]\label{ob embed def}
   	$M^n$ admits an open book embedding in $V^N$ if there is an open book $\mathcal{O}b(\Sigma_M^{n-1},\phi_M)$ of $M$ and an open book $\mathcal{O}b(\Sigma_V^{N-1},\phi_V)$ of $V$ such that the following conditions hold.
   	
   	\begin{enumerate}
   		\item There exists a proper embedding $f: (\Sigma_M,\partial \Sigma_M) \rightarrow (\Sigma_V,\partial \Sigma_V)$,
   		\item $\phi_V \circ f$ is isotopic (relative to boundary) to $f \circ \phi_M$.
   	\end{enumerate}
   	
   	We say, $M^n$ open book embeds in $V^N$ with respect to the open book $\mathcal{O}b(\Sigma_V,\phi_V)$ \emph{or} $M^n$ open book embeds in $\mathcal{O}b(\Sigma_V,\phi_V)$.  	
   \end{definition}

   \begin{exmp}
   	$\mathbb{S}^n = \mathcal{O}b(\mathbb{D}^{n-1},id)$ admits a canonical open book embedding in $\mathcal{O}b(\mathbb{D}^{N-1},id) = \mathbb{S}^N$ for $N-n \geq 1$. 
   \end{exmp}

   \begin{exmp}
   	Consider the unit disk cotangent bundle $DT^*\mathbb{S}^1$ and let $\tau$ be a Dehn twist along its core circle. It is known that $\mathbb{S}^3 = \mathcal{O}b(DT^*\mathbb{S}^1,\tau^k)$ open book embeds in $\mathcal{O}b(\mathbb{D}^4,id)$ for all $k \in \mathbb{Z}$. For a proof see Corollary $9$ of \cite{PPS}. 
   \end{exmp}

  To prove our results on open book embeddings we shall use the notion of a \emph{flexible embedding}.

  \begin{definition}[Flexible embedding] \label{flex embed}
  	A proper embedding $f: (\Sigma^n,\partial \Sigma^n) \rightarrow (V^{n+k},\partial V^{n+k})$ is called flexible if for every diffeomorphism $\phi$ of $(\Sigma,\partial \Sigma)$ there is an isotopy $\Phi_t$ ($t \in [0,1]$) of $(V^{n+k},\partial V^{n+k})$ such that $\Phi_0 = id$ and $\Phi_1 \circ f = f \circ \phi$. Here $\partial \Sigma$ and $\partial V$ could be empty sets.
  	
  \end{definition}

  \begin{exmp}\label{flex exmp}
  	Let $f$ be an embedding of a closed manifold $M^m$ in $\mathbb{R}^{2m+1}$ and $\phi \in Diffeo(M)$. By Wu \cite{Wu}, $f$ and $f\circ \phi$ are isotopic, say via $\phi_t$. One can then extend this to an ambient isotopy $\Phi_t$ of $\mathbb{R}^{2m+1}$ such that $\Phi_0 = id$ and $\Phi_1 \circ f = f \circ \phi$. Thus, any embedding of $M^m$ in $\mathbb{R}^{2m+1}$ is flexible.
  \end{exmp}

   \subsection{Winkelnkemper open books}
   Winkelnkemper \cite{Wi} showed existence of open book decomposition for simply connected manifolds of dimension greater than $6$. The prototype for Winkelnkemper's construction, as mentioned in \cite{Wi}, is the following example.
   
   \begin{exmp}
   	Let $V$ be any compact manifold with non-empty boundary. Consider the quotient space $W = \frac{V \times [0,1]}{(x,t) \sim (x,\frac{1}{2})}$. Let $N \subset \partial W$ be the image of $\partial V \times [0,1]$ under the quotient map. $N$ then divides $\partial W$ into two parts: $\partial_{up}W$ and $\partial_{lo}W$. If $h$ is a diffeomorphism of the triple $(\partial W, \partial_{up}W, \partial_{lo}W)$ then $W \cup_h W$ has an open book decomposition with binding $N$. See Figure \ref{wink exmp}.
   \end{exmp} 
   
   \begin{figure}[htbp] 
   	
   	\centering
   	\def\svgwidth{8cm}
\begingroup%
  \makeatletter%
  \providecommand\color[2][]{%
    \errmessage{(Inkscape) Color is used for the text in Inkscape, but the package 'color.sty' is not loaded}%
    \renewcommand\color[2][]{}%
  }%
  \providecommand\transparent[1]{%
    \errmessage{(Inkscape) Transparency is used (non-zero) for the text in Inkscape, but the package 'transparent.sty' is not loaded}%
    \renewcommand\transparent[1]{}%
  }%
  \providecommand\rotatebox[2]{#2}%
  \newcommand*\fsize{\dimexpr\f@size pt\relax}%
  \newcommand*\lineheight[1]{\fontsize{\fsize}{#1\fsize}\selectfont}%
  \ifx\svgwidth\undefined%
    \setlength{\unitlength}{313.10594444bp}%
    \ifx\svgscale\undefined%
      \relax%
    \else%
      \setlength{\unitlength}{\unitlength * \real{\svgscale}}%
    \fi%
  \else%
    \setlength{\unitlength}{\svgwidth}%
  \fi%
  \global\let\svgwidth\undefined%
  \global\let\svgscale\undefined%
  \makeatother%
  \begin{picture}(1,0.43717555)%
    \lineheight{1}%
    \setlength\tabcolsep{0pt}%
    \put(0,0){\includegraphics[width=\unitlength,page=1]{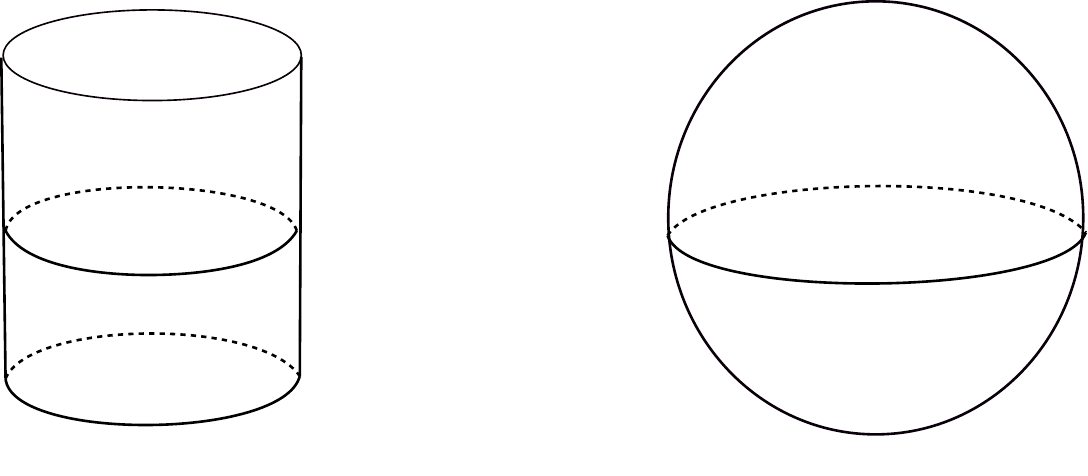}}%
    \put(0.73906112,0.318584){\color[rgb]{0,0,0}\makebox(0,0)[lt]{\lineheight{1.25}\smash{\begin{tabular}[t]{l}$\partial_{up}W$\end{tabular}}}}%
    \put(0.74187869,0.08287819){\color[rgb]{0,0,0}\makebox(0,0)[lt]{\lineheight{1.25}\smash{\begin{tabular}[t]{l}$\partial_{lo}W$\end{tabular}}}}%
    \put(0.78260922,0.17341149){\color[rgb]{0,0,0}\makebox(0,0)[lt]{\lineheight{1.25}\smash{\begin{tabular}[t]{l}$N$\end{tabular}}}}%
    \put(0.06526408,0.00320518){\color[rgb]{0,0,0}\makebox(0,0)[lt]{\lineheight{1.25}\smash{\begin{tabular}[t]{l}$V \times [0,1]$\end{tabular}}}}%
    \put(0.28681247,0.08367255){\color[rgb]{0,0,0}\makebox(0,0)[lt]{\lineheight{1.25}\smash{\begin{tabular}[t]{l}$0$\end{tabular}}}}%
    \put(0.28843355,0.3714254){\color[rgb]{0,0,0}\makebox(0,0)[lt]{\lineheight{1.25}\smash{\begin{tabular}[t]{l}$1$\end{tabular}}}}%
    \put(0.28779658,0.21521786){\color[rgb]{0,0,0}\makebox(0,0)[lt]{\lineheight{1.25}\smash{\begin{tabular}[t]{l}$\frac{1}{2}$\end{tabular}}}}%
    \put(0,0){\includegraphics[width=\unitlength,page=2]{drawing-2.pdf}}%
  \end{picture}%
\endgroup%

   	\caption{Prototype example for Winkelnkemper open book}
   	\label{wink exmp}
   	
   \end{figure}

    We briefly review the idea behind Winkelnkemper's construction. As in \cite{Wi}, we assume that $M^n$ is a simply connected closed $n$-manifold and $n = 2k+1$ for $k \geq 3$. 
   
    \noindent Take a minimal handle decomposition of $M^n$. Let $W_1$ denote the handlebody constructed by attaching handles of dimension up to $k$ and let $W_2$ denote the dual handlebody. Then $M^n = W_1 \cup_h W_2$, where $W_1$ and $W_2$ are attached along their common boundary $E = W_1 \cap W_2 = \partial W_1 = \partial W_2$ via some diffeomorphism $h$ of $E$. The following lemma is a crucial step in Winkelnkemper's argument.
   
   \begin{lemma}[Winkelnkemper \cite{Wi}]\label{assert}
   	There exists a $k$-complex $K \subset E = \partial W_1 = \partial W_2$ such that both inclusions $K \subset W_j$ ($j = 1, 2$) are homotopy equivalences.
   \end{lemma}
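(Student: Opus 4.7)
The plan is to construct $K$ as a $k$-dimensional spine of $W_1$ pushed into the boundary $E$, and then to verify via a homotopy-theoretic argument that the same $K$ also serves as a spine of $W_2$.

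Start with a self-indexing Morse function $f : M \to [0, n+1]$ compatible with the decomposition, so that $W_1 = f^{-1}[0, k+\tfrac{1}{2}]$ carries the critical points of index at most $k$ and $W_2 = f^{-1}[k+\tfrac{1}{2}, n+1]$ carries those of index at least $k+1$. Because $M$ is simply connected and $k \geq 3$, handle trades and cancellations arrange both $W_1$ and $W_2$ to be simply connected. Fix a Morse--Smale gradient; the union $K_1 = \bigcup_{p} W^u(p) \subset W_1$ over critical points $p$ of index at most $k$ is a $k$-dimensional CW subcomplex, and $W_1$ deformation retracts onto $K_1$ along the flow.

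Next, push $K_1$ into $E$. Inside each $j$-handle $\mathbb{D}^j \times \mathbb{D}^{n-j} \subset W_1$ with $j \leq k$, the corresponding cell of $K_1$ is the core $\mathbb{D}^j \times \{0\}$, and since $n-j \geq k+1 \geq 4$ there is ample room for a radial ambient isotopy to move this core to $\mathbb{D}^j \times \{v\}$ for a fixed $v \in \partial \mathbb{D}^{n-j}$, whose image lies in $\partial W_1 = E$. Performing these pushes one handle at a time from lowest to highest index, compatibly with the attaching maps of the subsequent handles (possible because each attaching region is a small neighborhood of the boundary of an already-pushed cell), assembles an ambient isotopy of $W_1$ carrying $K_1$ to an embedded $k$-complex $K \subset E$. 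The inclusion $K \hookrightarrow W_1$ thereby remains a homotopy equivalence.

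The main obstacle is to verify that $K \hookrightarrow W_2$ is also a homotopy equivalence. From the dual perspective, $W_2$ is built from a collar of $E$ by attaching handles of index $\geq k+1$, so the pair $(W_2, E)$ is $k$-connected, $W_2$ has the homotopy type of a $k$-dimensional CW complex, and $\pi_i(E) \to \pi_i(W_2)$ is an isomorphism for $i < k$ and a surjection for $i = k$. The analogous statement for $(W_1, E)$, combined with $K \simeq W_1$, already gives that $\pi_i(K) \to \pi_i(W_2)$ is an isomorphism for $i < k$, so the crux is matching $\pi_k$. The plan is to choose the handle decomposition symmetrically, aligning the $k$-handle cores of $W_1$ with the cocores of the dual $(k+1)$-handles of $W_2$ in $E$; Poincaré--Lefschetz duality on $M$ together with the Mayer--Vietoris sequence of $M = W_1 \cup_E W_2$ then force $H_k(W_1)$ and $H_k(W_2)$ to have matching ranks, so the induced surjection $H_k(K) = H_k(W_1) \twoheadrightarrow H_k(W_2)$ is an isomorphism (with minimal decompositions controlling torsion). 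Whitehead's theorem, applied to simply connected $K$ and $W_2$ of $\leq k$-dimensional homotopy type, then yields the desired homotopy equivalence.
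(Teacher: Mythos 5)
The paper itself does not prove this lemma --- it is quoted from Winkelnkemper's announcement \cite{Wi} --- so your argument has to stand on its own, and it has a genuine gap at exactly the step you yourself flag as the crux. Your $K$ is the spine of $W_1$ pushed into $E$, and you need the composite $H_k(K)\cong H_k(W_1)\to H_k(W_2)$, induced by $K\subset E\subset W_2$, to be an isomorphism. This map factors through $H_k(E)$, and $H_k(E)\to H_k(W_2)$ kills precisely the classes of the attaching spheres of the $(k+1)$-handles of $W_2$; there is no a priori relation between those classes and the classes in $H_k(E)$ carried by the pushed cores of the $k$-handles of $W_1$. Concretely, if $W_1\cong S^k\times D^{k+1}$ then $E\cong S^k\times S^k$ with $H_k(E)=\mathbb{Z}\langle a\rangle\oplus\mathbb{Z}\langle b\rangle$, where $a$ is the pushed core and $b$ the belt sphere; a single $(k+1)$-handle attached along a sphere in class $\alpha a+\beta b$ gives $H_k(W_2)=\mathbb{Z}^2/\langle\alpha a+\beta b\rangle\cong\mathbb{Z}$, in which the image of $a$ is $\pm\beta$ times a generator. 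So your $K$ fails to be a homology equivalence into $W_2$ whenever $\beta\neq\pm1$, and nothing in your argument excludes this. The assertion ``the induced surjection $H_k(K)\twoheadrightarrow H_k(W_2)$'' is therefore unjustified twice over: the map need not be surjective, and equality of ranks of source and target does not make a particular homomorphism between them an isomorphism. Showing that a minimal decomposition can be chosen, and the $k$-cells of $K$ positioned in $E$, so that they map to a generating set of $H_k(W_2)$ compatible with the $(k+1)$-attaching spheres is precisely the content of Winkelnkemper's Assertion; your sentence ``the plan is to choose the handle decomposition symmetrically'' names this step but does not carry it out. (The final appeal to Whitehead's theorem is fine once this is done, since everything in sight is simply connected for $k\geq 3$.)

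A secondary problem is the embeddedness of $K$ in $E$. Cells of index $<k$ push into the boundary and stay disjoint by general position, but $K$ is a $k$-complex and $E$ is $2k$-dimensional, so you are at the critical dimension. The pushed cores of distinct $k$-handles are disjoint because the handles are, but each core's attaching $(k-1)$-sphere must still be joined to the previously pushed $(k-1)$-skeleton by the trace of a homotopy inside $\partial W_1^{(k-1)}$; these traces are again $k$-dimensional, live in a common region, and in general position meet each other and the cores in isolated double points. Your parenthetical ``each attaching region is a small neighborhood of the boundary of an already-pushed cell'' is false in general: an attaching sphere can wander arbitrarily over the boundary of the previous stage. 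Removing the double points requires the Whitney trick in the simply connected $2k$-manifold $E$ (one place where $k\geq 3$ is genuinely needed) or some substitute; as written, your construction only yields a map of a $k$-complex into $E$, whereas the regular neighborhood $V\subset E$ taken in the next step of Winkelnkemper's construction requires an honest embedded subcomplex.
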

   
   Now let $V$ be a regular neighborhood of $K$ in $E$. This $V$ will be a candidate for the page and $\partial V$ will be the binding. Since $K$ has co-dimension $\geq 3$, $V$ and $\partial V$ are simply connected. Take a collar $\partial V \times [0,1]$ of $V$ and regard $W_1$ and $W_2$ as a relative cobordism between $V$ and the closure of the complement of $V \cup \partial V \times [0,1]$ in $E$. The assertion then implies that both $W_1$ and $W_2$ are relative $h$-cobordisms. Then, by the relative $h$-cobordism theorem, $W_1 = V \times [0,1] = W_2$, and by contracting the collar neighborhood $\partial V \times [0,1]$ we get the desired open book with binding $\partial V$ and page $V$. We call this open book the \emph{Winkelnkemper open book}. We observe that if $M^{2k+1}$ is $l$-connected, then by the above construction we get an open book whose pages are also $l$-connected. The reason is that we can then start with a handlebody decomposition of $M^{2k+1}$ such that both $W_1$ and $W_2$ have no handles upto dimension $l$. We note this in the following lemma.
   
   \begin{lemma}\label{le wink}
   	If $M^{2k+1}$ is a closed $l$-connected manifold ($l\geq1$), then the Winkelnkemper open book has $l$-connected pages.
   \end{lemma}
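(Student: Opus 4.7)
The plan is to refine Winkelnkemper's construction by starting with a handle decomposition of $M^{2k+1}$ that has no handles in a low-index range from either end. Since $M$ is simply connected and $\dim M = 2k+1 \geq 7$, Smale's handle-trading technique applies: from the $l$-connectedness of $M$ together with the dimension bound $2k+1 \geq 2l+3$ (valid in the nontrivial range $l \leq k-1$), one can successively cancel $1$-handles against higher-index handles, then $2$-handles, and so on, to arrange a handle decomposition of $M$ with no handles of index in $\{1, 2, \ldots, l\}$. Reading the decomposition upside-down, an index-$j$ handle becomes an index-$(2k+1-j)$ handle, so by the same argument applied to the dual decomposition we may further assume there are no handles of index in $\{2k+1-l, \ldots, 2k\}$.

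With this refined decomposition, $W_1$ (the union of handles of index $\leq k$) consists of a single $0$-handle together with handles of index in $\{l+1, \ldots, k\}$, and is therefore homotopy equivalent to a CW-complex with one $0$-cell and all remaining cells of dimension $\geq l+1$. In particular $W_1$ is $l$-connected. Symmetrically, $W_2$ is a $0$-handle (dual to the unique $(2k+1)$-handle) with handles of index in $\{l+1, \ldots, k\}$ attached, and is also $l$-connected.

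Now invoke Lemma~\ref{assert}, which provides a $k$-complex $K \subset E = \partial W_1 = \partial W_2$ such that both inclusions $K \hookrightarrow W_j$ are homotopy equivalences. Hence $K$ is $l$-connected. Since by construction the page $V$ is a regular neighborhood of $K$ in $E$, it deformation retracts onto $K$, so $\pi_i(V) \cong \pi_i(K) = 0$ for $i \leq l$, which is exactly the $l$-connectedness of the page.

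The main technical point is the handle-cancellation step; once that is in place the rest is formal, plugging the improved decomposition into the scheme recalled before the lemma. The dimension bound $2k+1 \geq 2l+3$ is what makes handle trading legitimate, and it is what restricts us to $l \leq k-1$; the remaining extreme case $l = k$ forces $M^{2k+1}$ to be a homotopy sphere by Poincaré duality, for which the standard open book with page $\mathbb{D}^{2k}$ is already $l$-connected, so the conclusion holds trivially there.
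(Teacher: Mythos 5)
Your proof is correct and follows essentially the same route as the paper: the paper simply asserts that one can start with a handle decomposition of $M^{2k+1}$ in which $W_1$ and $W_2$ have no handles of index $1,\dots,l$, and then reads off $l$-connectedness of the page from Lemma~\ref{assert} and the regular-neighborhood construction. Your write-up just supplies the justification (Smale's handle trading below the middle dimension) that the paper leaves implicit, plus the trivial extreme case $l=k$.
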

   
   \subsection{Embedding and isotopy of manifolds in Euclidean space}

   Let $M^n$ be a closed $k$-connected manifold. Let $M_0$ denote $M \setminus \{pt.\}$. Haefliger--Hirsch \cite{HH} proved the following generalizations of Whitney's embedding theorem \cite{Wh} and Wu's isotopy theorem \cite{Wu}.
   
   \begin{theorem}[Haefliger--Hirsch \cite{HH}]\label{hh embedding}
   	Assume $0 \leq k < \frac{1}{2}(n-4)$. If $M_0$ can be immersed in $\mathbb{R}^{2n-k-1}$ with a normal vector field, then $M^n$ can be embedded in $\mathbb{R}^{2n-k-1}$.
   \end{theorem}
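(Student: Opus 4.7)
My plan is to follow the classical Haefliger--Hirsch strategy: upgrade the given immersion $f_0\colon M_0 \to \R^{2n-k-1}$ (equipped with its normal vector field $\nu$) to an embedding of $M_0$ by cancelling double points, then extend the embedding across the deleted point of $M$. By general position, assume $f_0$ is a transverse immersion; then its self-intersection set $\Delta \subset \R^{2n-k-1}$ is a smooth submanifold of dimension $2n - (2n-k-1) = k+1$, with preimage $\tilde\Delta \subset M_0$ a closed $(k+1)$-manifold carrying a free $\Z/2$ involution.

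To cancel the double points, apply a Whitney-type trick. The hypothesis $k < (n-4)/2$ gives $2(k+2) < n$, so the Whitney disks pairing sheets of intersection have small enough dimension to be placed in general position in $\R^{2n-k-1}$ and made disjoint from $f_0(M_0)$ by a standard dimension count. The $k$-connectivity of $M$ kills the homotopy-theoretic obstructions to producing these disks, while the chosen normal vector field $\nu$ supplies the framing data that allows us to execute the Whitney move ambiently. Iterating this process yields an embedding of $M_0$ in $\R^{2n-k-1}$. To extend across the puncture $p \in M$, restrict to a small linking sphere $S^{n-1}$ around $p$: the codimension $n-k$ together with the bound $k < (n-4)/2$ places this sphere in the Haefliger unknotting range for embeddings of $S^{n-1}$ in $\R^{2n-k-1}$, so it bounds an embedded disk, and capping off gives the desired embedding of $M$.

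The main obstacle is the double-point-cancellation step. Both the existence of the Whitney disks (which relies on the $k$-connectivity of $M$ to kill the relevant homotopy obstructions) and a consistent framing for the Whitney move (which relies on the normal vector field $\nu$) must be arranged simultaneously; tracking this bookkeeping is the crux of the argument, and is precisely what the normal-vector-field hypothesis is tailored to enable.
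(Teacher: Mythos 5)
Your outline diverges from the actual Haefliger--Hirsch argument at both main steps, and each divergence hides a genuine gap. For the first step, the self-intersection locus of a generic immersion $f_0\colon M_0 \to \mathbb{R}^{2n-k-1}$ is a $(k+1)$-dimensional manifold, not a finite set of points, so the classical Whitney trick (which cancels isolated double points in pairs of opposite sign) does not apply; there is no canonical pairing of a positive-dimensional double-point set and no standard ``Whitney disk bundle'' move to invoke. What Haefliger--Hirsch actually prove (their Theorem 3.1, which the paper simply quotes) is that, because $M$ is $k$-connected, the open manifold $M_0$ compresses into a neighborhood of a spine of dimension at most $n-k-1$, and a generic map of such a complex into $\mathbb{R}^{2n-k-1}$ is already an embedding by a dimension count; no Whitney move and no normal vector field are needed for this step.

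The second and more serious gap is in the capping-off step. Haefliger unknotting tells you that the linking sphere $f(\partial D_1)\cong S^{n-1}$ bounds an embedded disk \emph{somewhere} in $\mathbb{R}^{2n-k-1}$, but what is required is a disk disjoint from the image of the rest of $M$ --- that is the entire difficulty. The paper's proof handles this by showing $f(\partial M_1)$ is null-homotopic in the complement $X = \mathbb{R}^{2n-k-1}\setminus f(M_2)$: the normal vector field $v$ is used precisely here, to push a copy of $M_1$ off $f(M_2)$ via $x \mapsto f(x)+\lambda(x)v(x)$ and thereby exhibit an explicit null-homology of $f(\partial M_1)$ in $X$; then Alexander and Poincar\'e duality together with the $k$-connectivity of $M$ give $H_i(X)=0$ for $i\leq n-2$, so Hurewicz upgrades null-homologous to null-homotopic; finally their Theorem 3.2 (an embedding theorem for proper maps of open disks into highly connected manifolds, applicable since $2(2n-k-1)\geq 3(n+1)$ in the stated range) converts the null-homotopy into an embedded disk in $X$ relative to its boundary. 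Your proposal assigns the normal vector field the wrong job (framing Whitney disks) and omits the duality--Hurewicz argument that makes the complement connected enough for the capping disk to be found there.
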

   
   \begin{theorem}[Haefliger--Hirsch \cite{HH}]\label{hh isotopy}
   	Assume $0 \leq k \leq \frac{1}{2}(n-4)$. If $M^n$ is orientable there is a $1-1$ correspondence between the isotopy classes of embeddings of $M$ in $\mathbb{R}^{2n-k}$ and the regular homotopy classes of immersions of $M_0$ in $\mathbb{R}^{2n-k}$ with a normal vector field.
   \end{theorem}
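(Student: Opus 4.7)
The plan is to construct a map $\Phi$ sending an isotopy class of embeddings $M \hookrightarrow \mathbb{R}^{2n-k}$ to a regular homotopy class of immersions $M_0 \to \mathbb{R}^{2n-k}$ equipped with a normal vector field, and then to show that $\Phi$ is a bijection. The construction of $\Phi$ refines the map implicitly used in the proof of Theorem~\ref{hh embedding}.

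To define $\Phi$, take an embedding $f : M \hookrightarrow \mathbb{R}^{2n-k}$, delete a point $p \in M$, and restrict to obtain $g = f|_{M_0} : M_0 \to \mathbb{R}^{2n-k}$. The hypothesis $k \leq (n-4)/2$ gives codimension $n-k \geq n/2 + 2$, so at $p$ one can choose a unit normal vector $v_p$ in the fibre $S^{n-k-1}$ of the normal sphere bundle and extend it to a nonvanishing section $\nu$ of the normal bundle of $g$ over $M_0$; the connectivity of the fibre together with the fact that $M_0$ has the homotopy type of an $(n-1)$-complex ensures that the pair $(g,\nu)$ is well-defined up to regular homotopy of pairs, and the assignment $\Phi([f]) := [(g,\nu)]$ then descends to isotopy classes. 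Orientability of $M$ is used implicitly here to make the sign of $v_p$ consistent.

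For surjectivity, given a pair $(g,\nu)$, use $\nu$ as a preferred direction to cap off the puncture of $M_0$ by a small embedded $n$-disk pushed along $\nu$, producing an immersion $\hat{f} : M \to \mathbb{R}^{2n-k}$. An application of (the argument of) Theorem~\ref{hh embedding}, together with a Whitney-trick argument removing double points in pairs (using orientability of $M$ and codimension $n-k\geq 4$) away from the capping disk, promotes $\hat{f}$ to an embedding $\tilde{f}$ whose associated pair is regularly homotopic to $(g,\nu)$. For injectivity, a regular homotopy $(g_t,\nu_t)$ between the pairs associated to two embeddings $f_0, f_1$ lifts, via the same capping-off procedure in families, to a one-parameter family of immersions $\hat{f}_t : M \to \mathbb{R}^{2n-k}$ with $\hat{f}_0 = f_0$ and $\hat{f}_1 = f_1$; a parametric Whitney trick then converts this family into a genuine ambient isotopy between $f_0$ and $f_1$.

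The main obstacle is the parametric Whitney trick used in the injectivity step: self-intersections of the family $\hat{f}_t$ appear as birth--death pairs which must be cancelled by embedded Whitney disks depending continuously on $t$, and one must simultaneously arrange that these Whitney disks avoid the capping $n$-disks at the puncture. This is precisely where the inequality $k \leq (n-4)/2$ is sharp, since it is needed to guarantee that the Whitney disks in a $1$-parameter family are themselves embedded and in general position with the image, and that the capping $n$-disk sits in codimension large enough to be moved away from the self-intersection locus.
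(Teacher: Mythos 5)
Your construction of $\Phi$ has a genuine gap at the very first step: the claim that the normal field $\nu$ on $M_0$ is ``well-defined up to regular homotopy of pairs'' by connectivity of the fibre is false, and this is exactly the subtle point of the Haefliger--Hirsch argument. The homotopy classes of nonvanishing normal sections over $M_0$ form a torsor over $H^{n-k-1}(M_0;\pi_{n-k-1}(\mathbb{S}^{n-k-1})) \cong H^{n-k-1}(M_0;\mathbb{Z})$, which is in general nonzero: since $M$ is only $k$-connected, $M_0$ has the homotopy type of an $(n-k-1)$-complex, so this is precisely the top (and surviving) obstruction group. Two different choices of $\nu$ therefore give pairs $(g,\nu)$ that are \emph{not} regularly homotopic, so your $\Phi$ is not well-defined, and with an arbitrary choice of $\nu$ it would also fail to be injective. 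The paper resolves this by singling out a \emph{canonical} normal field: writing $X$ for the complement of $f(M_2)$, one shows that the difference class $d(v_1,v_2)\in H^{n-k-1}(M_0)\cong H_n(X)$ of two normal fields equals $[f_{v_1}(M)]-[f_{v_2}(M)]$ for the pushed-off copies $f_{v_i}(M)$, hence that there is exactly one homotopy class of $v$ for which $f_v(M)$ is null-homologous in $X$. That distinguished class is what the correspondence uses; any proof must reproduce this normalization or something equivalent, and your ``orientability makes the sign of $v_p$ consistent'' remark does not do this job.

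The surjectivity and injectivity steps also rely on a tool that is unavailable in this range. For $k>0$ a generic immersion of $M^n$ in $\mathbb{R}^{2n-k}$ has a $k$-dimensional (not zero-dimensional) double-point locus, so ``removing double points in pairs'' by Whitney disks, and a fortiori a parametric Whitney trick, does not apply. The paper's injectivity argument instead (i) uses Theorem~\ref{hh 3.1} to upgrade the regular homotopy of $(f_1|_{M_0},v_1)$ and $(f_2|_{M_0},v_2)$ to an isotopy and extends it ambiently so that $f_1=f_2$ on $M_1$ with $v_1=v_2$; (ii) uses the null-homology normalization together with the high connectivity of $X$ (computed via Poincar\'e--Alexander duality and Hurewicz) to conclude that $f_1|_{D_1}$ and $f_2|_{D_1}$ are homotopic rel boundary in $X$; and (iii) invokes the disk embedding-and-isotopy theorem (Theorem~\ref{hh 3.2}) --- not the Whitney trick --- to convert that homotopy into an isotopy. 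Both of your Whitney-trick steps need to be replaced by appeals to Theorem~\ref{hh 3.2} in the highly connected complement.
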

   
   The proof of Theorem \ref{hh embedding} and Theorem \ref{hh isotopy} is based on the following theorems of Haefliger and Hirsch.
   
   \begin{theorem}[\cite{HH}]\label{hh 3.1}
   	Let $M^n$ be a $k$-connected manifold.
   	\begin{enumerate}
   		\item If $m \geq 2n-k-1$, then $M_0$ can be immersed in $\mathbb{R}^m$, and any immersion is regularly homotopic to an embedding.
   		\item If $m \geq 2n-k$, any two embeddings $f$ and $g$ of $M_0$ in $\mathbb{R}^m$ are regularly homotopic. If $G$ is a regular homotopy connecting $f$ and $g$, there is a regular homotopy $G_t$ of $G$ such that $G_0 = G$, $G_1$ is an isotopy, and for each $t$, $G_t$ connects $f$ to $g$. 
   	\end{enumerate}
   \end{theorem}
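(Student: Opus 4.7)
The plan is to invoke the Smale--Hirsch h-principle for immersions of open manifolds, combine it with the Whitney trick to pass from immersions to embeddings, and finish with the isotopy extension theorem to upgrade a regular homotopy to an ambient isotopy.

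For part (1), note that since $M$ is closed, $k$-connected and of dimension $n$, the open manifold $M_0$ deformation retracts onto an $(n-1)$-dimensional CW complex whose cells lie in dimensions $0$ and $k+1,\ldots,n-1$. By the Smale--Hirsch theorem, immersions $M_0 \to \R^m$ correspond bijectively to bundle monomorphisms $TM_0 \hookrightarrow M_0 \times \R^m$, which are sections of an associated Stiefel bundle with fibre $V_n(\R^m)$, an $(m-n-1)$-connected space. A standard obstruction-theoretic count using the cell structure of $M_0$ (the obstructions live in $H^{j}(M_0; \pi_{j-1}(V_n(\R^m)))$ for $j$ in the set of cell dimensions) shows that the bound $m \geq 2n-k-1$ is exactly what is needed to kill all obstructions, producing an immersion $f: M_0 \to \R^m$. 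To replace $f$ by an embedding, I would put $f$ in general position so that its self-intersection set $S$ is a submanifold of dimension $2n-m$. Using the $k$-connectedness of $M$ to null-homotope components of $S$ and the Whitney trick (valid since the codimension $m-n$ is at least $n-k-1 \geq 3$), one cancels pairs of double points via regular homotopies supported in neighbourhoods of embedded Whitney disks. Iterating produces an embedding regularly homotopic to $f$.

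For part (2), the extra dimension ($m \geq 2n-k$) lets the same obstruction theory run on $M_0 \times [0,1]$ with boundary conditions given by $f$ and $g$: the relevant monomorphism obstruction now vanishes one dimension higher, and the resulting section yields a regular homotopy between $f$ and $g$. Given such a regular homotopy $G$, consider its track $\tilde G : M_0 \times [0,1] \to \R^m \times [0,1]$ defined by $\tilde G(x,t) = (G_t(x), t)$, and put it in general position relative to its boundary (where $\tilde G$ is already an embedding, since $f, g$ are). The self-intersection set then lies in the interior $M_0 \times (0,1)$; applying the Whitney trick there cancels double points in pairs through a deformation of $G$ that fixes the slices at $t = 0$ and $t = 1$. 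This yields the desired $G_t$ with $G_0 = G$ and $G_1$ an isotopy, with each intermediate $G_t$ still connecting $f$ to $g$.

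The main obstacle is the parameterised Whitney trick in part (2): one must verify that the Whitney disks can be chosen with interiors strictly in the open slice $M_0 \times (0,1)$, so that neither $f$ nor $g$ is disturbed, and that their framings are compatible with the requirement that the deformation remains a \emph{regular} homotopy (not merely a homotopy through immersions in the wrong sense) throughout. The $k$-connectedness of $M$ supplies the null-homotopies used to locate the Whitney disks, and the sharp codimension bound $m \geq 2n-k$ is precisely what ensures each cancellation can be performed without introducing new intersections.
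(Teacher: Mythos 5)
This statement is not proved in the paper at all: it is quoted verbatim from Haefliger--Hirsch \cite{HH} and used as a black-box input (the paper only reproduces the proofs of Theorems \ref{hh embedding} and \ref{hh isotopy}, which \emph{use} Theorem \ref{hh 3.1}). So the comparison is between your sketch and the original argument of \cite{HH}. Your overall strategy -- Smale--Hirsch plus a geometric device to upgrade immersions to embeddings -- is the right one, but two of your steps have genuine problems.

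First, the obstruction count does not close with the cell structure you assert. You say $M_0$ retracts onto a complex with cells in dimensions $0$ and $k+1,\dots,n-1$; with top cells in dimension $n-1$, killing the obstructions in $H^{j}(M_0;\pi_{j-1}(V_n(\mathbb{R}^m)))$ forces $m\geq 2n-1$, not $m\geq 2n-k-1$. The fact you actually need (and the one \cite{HH} uses, which also appears implicitly in the paper's proof of Lemma \ref{relative HH isotopy}(3)) is that $M_0$ has a spine of dimension $n-k-1$: take a handle decomposition of $M$ with no handles of index $1,\dots,k$ \emph{and}, dually, none of index $n-k,\dots,n-1$, and delete the single $n$-handle. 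Only then does $(m-n-1)$-connectivity of $V_n(\mathbb{R}^m)$ give existence of the monomorphism for $m\geq 2n-k-1$ and uniqueness for $m\geq 2n-k$. Second, and more seriously, the passage from immersion to embedding by ``cancelling pairs of double points via Whitney disks'' does not apply here: for $2n-k-1\leq m<2n$ a generic immersion has a double-point set of positive dimension $2n-m$, not a finite collection of points, so there are no pairs to cancel and no Whitney disks in the usual sense. The mechanism in \cite{HH} is instead compression: isotope $M_0$ into an arbitrarily small neighborhood of its $(n-k-1)$-dimensional spine $K$; since $2(n-k-1)<m$, general position already makes the immersion an embedding near $K$, and the compression supplies the regular homotopy. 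This is where the openness of $M_0$ is essential -- your argument never uses it, which is a sign something is off, since the statement is false for $M$ itself. The same compression, applied relatively to $M_0\times[0,1]$ with its ends fixed, replaces your parametrised Whitney trick in part (2).
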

   
   \begin{theorem}[\cite{HH}]\label{hh 3.2}
   	
   	Let $X$ be an $m$-manifold and $E$ an open $n$-disk.
   	
   	\begin{enumerate}
   		\item Suppose $2m \geq 3(n+1)$ and $X$ is $(2n-m+1)$-connected. Let $g : E \rightarrow X$ be a proper map whose restriction to the complement of some compact set is an embedding. Then there is a homotopy, fixed outside of a compact set, which deforms $g$ into an embedding.
   		\item Suppose $2m \geq 3(n+1)$ and $X$ is $(2n-m+2)$-connected. Let $g_0 , g_1 : E \rightarrow X$ be proper embeddings which which are connected by a homotopy fixed outside of a compact set. Then $g_0$ and $g_1$ are also connected by an isotopy $g_t$, fixed outside of a compact set.
   	\end{enumerate}
   \end{theorem}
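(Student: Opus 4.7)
The plan is to follow the classical metastable-range embedding strategy: place the map in general position so that the self-intersection locus is a manifold of low dimension, then cancel these self-intersections by Whitney-style moves, using the connectivity hypothesis on $X$ to guarantee the existence of the relevant Whitney discs and the metastable dimension inequality $2m \geq 3(n+1)$ to guarantee their embeddedness. All modifications will be supported inside a compact subset of $E$.

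For part (1), since $g$ is already an embedding outside some compact set $K_0 \subset E$, fix a slightly larger compact $K$ with $K_0 \subset \mathrm{int}(K)$ and perform all subsequent homotopies with support inside $\mathrm{int}(K)$. Smooth approximation lets us assume $g$ is smooth, and a transversality perturbation relative to $E \setminus K$ puts $g|_{\mathrm{int}(K)}$ in general position, so that $g$ is an immersion whose self-intersection set $\Sigma(g)$ is a $(2n-m)$-dimensional submanifold of $X$. The metastable condition $2m \geq 3(n+1)$ is exactly what forces $2n-m < n$, keeping $\Sigma(g)$ a manageable lower-dimensional object. Using that $X$ is $(2n-m+1)$-connected, one can first do framed surgery on $\Sigma(g)$ inside $X$ to reduce to finitely many paired double points, then pair these points up by embedded Whitney $2$-discs whose boundaries lie in $g(E)$ and whose interiors are disjoint from $g(E)$. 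Existence of the discs uses the connectivity hypothesis; embeddedness and disjointness from $g(E)$ use the metastable dimension count. Performing the Whitney moves gives an ambient isotopy of $X$, compactly supported, that deforms $g$ into an embedding.

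For part (2), apply the same strategy to the trace of the homotopy. View the given homotopy as a proper map $H\colon E \times I \to X$ that is an embedding on $E \times \{0,1\}$ and outside a compact set, and run the argument of part (1) on the $(n+1)$-manifold $E \times I$ relative to its boundary together with the complement of a compact set. The parameter bumps the general-position dimension of self-intersections by one, which is exactly why the assumption is upgraded to $(2n-m+2)$-connectedness; with this upgrade, the surgery and Whitney moves go through relatively. The resulting proper embedding $E \times I \hookrightarrow X$ is the desired isotopy connecting $g_0$ and $g_1$.

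The hardest part is executing the Whitney moves while simultaneously preserving the compact support condition, keeping the Whitney discs embedded, and handling the borderline case $2n-m = 0$ (finitely many isolated double points) where one must pair intersections with opposite signs via a framing/sign-matching argument before the move can be applied. A second delicate point is the proper control at infinity: because $E$ is open, one must choose a careful exhaustion by compact sets and verify that each perturbation is genuinely supported in $\mathrm{int}(K)$ so that the final homotopy or isotopy remains fixed outside a single compact set of $E$.
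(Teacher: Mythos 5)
The paper offers no proof of this statement to compare against: it is quoted directly from Haefliger--Hirsch \cite{HH} and used as a black box in the proofs of Theorems \ref{hh embedding} and \ref{hh isotopy}. So your attempt can only be judged on its own merits, and there it has two genuine gaps. First, in the metastable range $2m \geq 3(n+1)$ one generally has $m < 2n$, and a generic smooth map $E^n \to X^m$ is \emph{not} an immersion: by Thom--Boardman the locus where $dg$ drops rank has dimension $2n-m-1$, which is nonnegative once $n \geq 5$. Your opening step ``general position makes $g$ an immersion with a $(2n-m)$-dimensional self-intersection manifold'' is therefore false as stated, and the singular locus (which sits in the closure of the double point set) has to be handled, not ignored. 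Second, and more seriously, the step ``do framed surgery on $\Sigma(g)$ inside $X$ to reduce to finitely many paired double points, then apply Whitney moves'' is the entire content of the theorem and is asserted rather than argued. Two $n$-dimensional sheets in an $m$-manifold with $2n > m$ meet generically in a positive-dimensional set, so ``finitely many isolated double points'' is not even a generic configuration here; producing it by surgery would require $\Sigma(g)$ to be appropriately framed and null-cobordant, which you do not establish, and the classical Whitney trick does not apply to positive-dimensional intersection loci. What you call the borderline case is in fact the main case.

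The actual argument in \cite{HH} does not go this route: it rests on Haefliger's metastable-range embedding and isotopy theorems, whose hypothesis is an equivariant condition on the deleted product $E \times E \setminus \Delta$, and the connectivity assumptions on $X$ enter through obstruction theory to verify that condition --- the relevant obstruction groups vanish precisely because $X$ is $(2n-m+1)$-connected (for existence) or $(2n-m+2)$-connected (for isotopy). Your reduction of part (2) to a relative version of part (1) applied to the track $E \times I \to X$ is the right idea and correctly explains why the connectivity hypothesis shifts by one, but it inherits the gaps above. To make the proposal into a proof you would either need to carry out Haefliger's higher-dimensional Whitney process in detail (removing a $(2n-m)$-dimensional double point manifold by an induction over its handles, with the $(2n-m+1)$-connectivity of $X$ supplying the null-homotopies of the membranes) or simply invoke Haefliger's theorem and do the obstruction-theoretic computation; as written, neither is done.
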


    A straight forward analysis of the proof of Theorem \ref{hh embedding} and Theorem \ref{hh isotopy} reveals that both the theorems hold for proper embeddings of a manifold $(V^n,\partial V^n)$ in an Euclidean disk $(\mathbb{D}^N,\partial \mathbb{D}^N)$. We will later review the proofs of Theorem \ref{hh embedding} and Theorem \ref{hh isotopy} and state their relative versions for manifolds with boundary in Section \ref{k-connect sect}.

   \subsection{Contact manifold and isocontact embedding}  \label{isocontact}

   A \emph{contact manifold} is an odd dimensional smooth manifold $M^{2n+1}$, together with a maximally non-integrable co-dimension $1$ distribution $\xi \subset TM$. A \emph{contact form} $\alpha$ representing $\xi$ is a local $1$-form on $M$ such that $\xi = Ker\{\alpha\}$. The contact condition is equivalent to saying that $\alpha\wedge(d\alpha)^n$ is a volume form. The $2$-form $d\alpha$ induces a conformal symplectic structure on $\xi$. $\xi$ is called \emph{co-orientable} if $TM/\xi$ is a trivial line bundle. Throughout this article we will only consider co-orientable contact structures on closed, orientable manifolds. We will denote a manifold $M$ together with a contact structure $\xi$ by $(M,\xi)$ and use $\xi_{std}$ to denote the standard contact structure on $\mathbb{R}^{2N+1}$ (\emph{or} $\mathbb{S}^{2N+1}$), given by $Ker\{dz + \Sigma^N_{i=1} x_idy_i\}$. Two contact manifolds $(M_1,\xi_1)$ and $(M_2,\xi_2)$ are called \emph{contactomorphic}if there is a diffeomorphism $h$ between them such that $Dh(\xi_1) = \xi_2$. If $\alpha_i$ is a contact form representing $\xi_i$ ($i =1,2$), then $h^*\alpha_2 = u\cdot\alpha_1$ for some positive function on $M_1$. For details on contact manifolds we refer to \cite{Ge}.

   We now define the notion of embedding in the category of contact manifolds.
   
   \begin{definition}[Isocontact embedding] $(M^{2n+1},\xi)$ admits an isocontact embedding in $(V^{2N+1},\eta)$, if there is an embedding $\iota: M \hookrightarrow V$ such that for all $p \in M$, $D\iota(T_pM)$ is transverse to $\eta_{f(p)}$ and $D\iota(T_pM)\cap{\eta}_{f(p)} = D\iota({\xi}_p)$.
   \end{definition}	
   
   \noindent If $\beta$ is a contact form representing $\eta$, then $D\iota(\xi)$ is a conformal symplectic sub-bundle of $(\eta|_{\iota(M)},d\beta)$. Gromov \cite{Gr} proved the following contact analog of the Whitney embedding theorem.
   
   \begin{theorem}[Gromov \cite{Gr}]\label{h-principle contact embed}
   	Every contact manifold $(M^{2n+1},\xi)$ has an isocontact embedding in $(\mathbb{R}^{4n+3},\xi_{std})$.
   \end{theorem}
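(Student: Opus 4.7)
The plan is to split the proof into the construction of a \emph{formal} isocontact embedding and the application of Gromov's h-principle to promote it to a genuine one. First I would record the notion: a formal isocontact embedding of $(M^{2n+1},\xi)$ into $(\mathbb{R}^{4n+3},\xi_{std})$ is a pair $(f,F)$ with $f:M\to\mathbb{R}^{4n+3}$ a smooth map and $F:TM\to T\mathbb{R}^{4n+3}$ a fiberwise injective bundle map covering $f$ satisfying $F(\xi)\subset\xi_{std}$ and such that $F|_\xi:(\xi,d\alpha|_\xi)\to(\xi_{std},d\alpha_{std})$ is a conformal symplectic monomorphism of vector bundles. Every genuine isocontact embedding yields a formal one via $F=Df$, and the h-principle supplies the converse up to isotopy.

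To produce a formal solution I would first fix a smooth embedding $f:M^{2n+1}\hookrightarrow\mathbb{R}^{4n+3}$ from Whitney's theorem. Pulled back to $M$ via $f$, the contact hyperplane $\xi_{std}$ is a symplectically trivial rank $4n+2$ bundle, since $\mathbb{R}^{4n+3}$ is contractible. The remaining task is then to realize the rank $2n$ conformal symplectic bundle $(\xi,d\alpha|_\xi)$ as a conformal symplectic sub-bundle of this trivial rank $4n+2$ symplectic bundle, and to send the Reeb direction $TM/\xi$ into a line transverse to it. By standard obstruction theory the obstructions to such a fibrewise conformal symplectic embedding lie in cohomology of $M$ with values in the homotopy of the symplectic Grassmannian $\mathrm{Sp}(4n+2)/(\mathrm{Sp}(2n)\times\mathrm{Sp}(2n+2))$, and because $\dim M=2n+1$ lies in the metastable range for this classifying space, they vanish and the desired monomorphism $F$ exists.

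I would then appeal to the h-principle of Gromov for isocontact embeddings, whose parametric and relative forms hold in codimension at least two (see \cite{Gr}). In our setting the codimension is $(4n+3)-(2n+1)=2n+2\geq 2$, so the inclusion of the space of genuine isocontact embeddings of $M$ into $(\mathbb{R}^{4n+3},\xi_{std})$ into the space of formal ones is a weak homotopy equivalence. In particular, the formal solution constructed above is homotopic through formal solutions to a genuine isocontact embedding, which proves the theorem.

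The main obstacle is the construction of the formal solution in the second step. The h-principle in the third step is used as a black box; the substantive point is to verify that the symplectic bundle $\xi$ can be embedded as a conformal symplectic sub-bundle of a trivial rank $4n+2$ symplectic bundle over a $(2n+1)$-dimensional base, and that this embedding is compatible with a choice of Reeb direction transverse to $f^*\xi_{std}$. Once this formal data is in place, Gromov's h-principle immediately delivers the genuine isocontact embedding.
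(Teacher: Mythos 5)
The paper does not prove this statement at all --- it is quoted as a theorem of Gromov and used as a black box, so there is no internal proof to compare against. Your sketch is essentially the standard derivation from the h-principle that the paper itself records later as Theorem \ref{h-principle for contact embedding}: Whitney embedding, construction of a formal contact monomorphism by obstruction theory, then Gromov's h-principle in the closed case. That outline is sound, but two points deserve correction. First, Gromov's h-principle for \emph{closed} contact manifolds requires $n\leq N-2$, i.e.\ real codimension at least $4$, not $2$ as you assert; the codimension-$2$ case is the much later Casals--Pancholi--Presas result cited elsewhere in the paper. Your application is unaffected only because the codimension here is $2(N-n)=2n+2\geq 4$ for $n\geq 1$, but the justification as written invokes a stronger statement than Gromov proved. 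Second, the obstruction-theoretic step is more naturally phrased with the complex (or conformal symplectic) Stiefel manifold of monomorphisms of $\mathbb{C}^{n}$ into $\mathbb{C}^{2n+1}$, which is homotopy equivalent to $U(2n+1)/U(n+1)$ and hence $(2n+2)$-connected, exceeding $\dim M=2n+1$; a Grassmannian only classifies the image subbundle, not the bundle map. You should also note that the formal data required by the h-principle includes a homotopy of monomorphisms from $Df$ to $F$; this exists because the real Stiefel manifold $V_{2n+1}(\mathbb{R}^{4n+3})$ is $(2n+1)$-connected. With these repairs your argument is a correct proof of the cited theorem.
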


   	\noindent When the co-dimension of a topological embedding of $(M,\xi)$ in an Euclidean space is less than $dim(M)-1$, the Chern classes of $\xi$ also give obstructions to isocontact embedding. For example, a necessary condition to isocontact embed a $3$-manifold $(M_0^3,\xi_0)$ in $(\mathbb{R}^5,\xi_{std})$ is $c_1(\xi_0) = 0$. By the recent works of Casals--Pancholi--Presas \cite{CPP}, it is now also sufficient.

   	\subsection{Almost contact structure}
   	We recall the notion of almost contact structure.

   	\begin{definition} An almost contact structure on an odd dimensional manifold $N^{2n+1}$ is an almost complex structure on its stable tangent bundle $TN\oplus\varepsilon^1_N$.
   	\end{definition}
   	
   	Thus, an almost contact structure on $N$ is an almost complex structure on $N \times \mathbb{R}$. So every almost contact structure on $N$ is given by a section of the associated $\Gamma_{n+1}$--bundle of $T(N\times\mathbb{R})$. The existence of an almost contact structure on $N$ is a necessary condition for the existence of a contact structure. For open manifolds, Gromov (\cite{Gr}) proved the following h-principle showing that this condition is also sufficient. 
   	
   	\begin{theorem} (Gromov, \cite{Gr}) \label{existence of contact structure gromov}
   		Let $K$ be a sub-complex of an open manifold $V$. Let $\bar{\xi}$ be an almost contact structure on $V$ which restricts to a contact structure in a neighborhood $Op(K)$ of $K$. Then one can homotope $\bar{\xi}$, relative to $Op(K)$, to a contact structure $\xi$ on $V$.    
   	\end{theorem}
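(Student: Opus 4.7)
The plan is to apply Gromov's h-principle for open, $\mathrm{Diff}$-invariant differential relations on open manifolds, in its form relative to a closed subset. First I would recast a contact structure as a holonomic section of a first-order differential relation. A co-orientable hyperplane field on $V^{2n+1}$ is cut out by a nowhere-vanishing $1$-form $\alpha$, and the contact condition $\alpha \wedge (d\alpha)^n \neq 0$ depends pointwise on the $1$-jet of $\alpha$, so it defines an open subset $\mathcal{R}_{\mathrm{cont}}$ of the appropriate $1$-jet bundle. Holonomic sections of $\mathcal{R}_{\mathrm{cont}}$ are genuine contact structures, while arbitrary (formal) sections are precisely almost contact structures. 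The relation is manifestly invariant under the natural action of $\mathrm{Diff}(V)$.

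Second, I would use the fact that $V$ is an open manifold to deformation retract it onto a subcomplex of positive codimension. An open manifold admits a proper Morse function without top-dimensional critical points, so there is a subcomplex $P \subset V$ with $\dim P \leq \dim V - 1$ and a deformation retraction $r_t : V \to V$ with $r_1(V) \subset Op(P)$. After subdividing, we can arrange $K \subset P$, and by shrinking a collar we may take $r_t$ to be the identity on $Op(K)$ throughout.

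Third, I would invoke the holonomic approximation theorem of Eliashberg--Mishachev for the formal section $\bar{\xi}$ near $P \setminus K$, keeping $\bar{\xi}$ fixed on $Op(K)$ where it is already holonomic. This produces an arbitrarily $C^{0}$-small isotopy $h_t$ of $V$ supported away from $Op(K)$ together with a genuine contact structure $\eta$ on $h_1(Op(P))$ whose $1$-jet is $C^{0}$-close to $h_1^{\ast}\bar{\xi}$. Because $\mathcal{R}_{\mathrm{cont}}$ is open, the straight-line interpolation between the two formal sections stays inside $\mathcal{R}_{\mathrm{cont}}$, yielding a homotopy through formal solutions from $\bar{\xi}$ to a genuine contact structure on $h_1(Op(P))$ rel $Op(K)$. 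Pulling back by the composed embedding $h_1 \circ r_1$, which is isotopic to the identity rel $Op(K)$, then produces the desired contact structure $\xi$ on all of $V$ homotopic to $\bar{\xi}$ rel $Op(K)$.

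The main obstacle is the holonomic approximation step itself, which is the technical heart of the $h$-principle. It requires that every formal section defined over a positive-codimension subcomplex $P \subset V$ be realizable as a genuine solution of $\mathcal{R}_{\mathrm{cont}}$ after an arbitrarily $C^{0}$-small wiggle of $P$; this is exactly what fails on a closed manifold, where no codimension-$\geq 1$ spine exists and the corresponding existence result for contact structures is the far deeper theorem of Borman--Eliashberg--Murphy. In our open-manifold setting, however, the retract onto $P$ supplies the needed room, and the passage from formal solution to genuine solution becomes formal once $\mathcal{R}_{\mathrm{cont}}$ is identified as open and $\mathrm{Diff}$-invariant.
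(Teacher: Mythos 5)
The paper does not prove this statement: it is quoted verbatim as a classical theorem of Gromov, with \cite{Gr} (and implicitly \cite{EM}) as the reference, so there is no in-paper argument to compare against. Your outline is the standard proof of that theorem --- recast the contact condition as an open, $\mathrm{Diff}$-invariant relation in the space of $1$-jets of nonvanishing $1$-forms, compress the open manifold onto a positive-codimension spine $P \supset K$ rel $Op(K)$, apply relative holonomic approximation near $P\setminus K$, and pull back by the compressing isotopy --- and it is essentially correct as a proof sketch.

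The one step you assert rather than justify is the identification ``arbitrary (formal) sections are precisely almost contact structures.'' A formal solution of $\mathcal{R}_{\mathrm{cont}}$ is a pair $(\alpha, A)$ with $\alpha$ a nonvanishing $1$-form and $A$ a formal derivative whose antisymmetrization is nondegenerate on $\ker\alpha$; what this gives directly is a cooriented hyperplane field with a conformal symplectic structure. The paper defines an almost contact structure as an almost complex structure on $TV\oplus\varepsilon^1_V$. These two notions agree up to (weak) homotopy equivalence of the corresponding section spaces, but that is a genuine (if standard) fiberwise obstruction-theoretic argument, and for the relative statement you need it carried out relative to $Op(K)$, where $\bar{\xi}$ is already an honest contact structure and hence already a holonomic solution. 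Without that bridge, your homotopy of formal solutions produced by holonomic approximation does not literally translate into a homotopy of almost contact structures in the paper's sense. Everything else --- the existence of a spine of positive codimension containing $K$ for an open manifold, the relative form of holonomic approximation, the openness of $\mathcal{R}_{\mathrm{cont}}$ allowing straight-line interpolation of nearby jets, and the $\mathrm{Diff}$-invariance needed for the final pullback --- is correctly in place.
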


   \subsection{Contact open book and embedding}
   
   Let $(V,\partial V,d\alpha)$ be an exact symplectic manifold which has a collar neighborhood symplectomorphic to $((-1,0] \times \partial V,d(e^t \cdot \alpha))$ for $t \in (-1,0]$. The Liouville vector field $Y$ for $d \alpha$ is defined by $\iota_Yd\alpha = \alpha$. $Y$ looks like $\frac{\partial}{\partial t}$ near $\partial V$ and therefore transverse to $\partial V$, pointing outwards. The $1$-form $e^t \cdot \alpha$ induces a contact structure on $\partial V$. Let $\phi$ be a symplectomorphism of $(V,d\alpha)$ that is identity in a collar of $\partial V$. The following lemma, due to Giroux, says that we can assume $\phi^*\alpha - \alpha$ to be exact.

   \begin{lemma}[Giroux \cite{Ko}]
   	The symplectomorphism $\phi$ of $(V,d\alpha)$ is isotopic, via symplectomorphisms which are identity near $\partial V$, to a symplectomorphism $\phi_1$ such that $\phi_1^*\alpha - \alpha$ is exact.
   \end{lemma}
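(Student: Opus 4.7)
The plan is to cancel the de Rham cohomology class $[\phi^*\alpha - \alpha] \in H^1(V;\mathbb{R})$ by post-composing $\phi$ with the time-$1$ map of a carefully chosen symplectic flow that is the identity near $\partial V$. First, since $\phi$ is a symplectomorphism, the $1$-form $\beta := \phi^*\alpha - \alpha$ is closed (as $d\beta = \phi^*d\alpha - d\alpha = 0$), and $\beta$ vanishes on the collar where $\phi$ is the identity. Hence the only obstruction to $\phi^*\alpha - \alpha$ being exact is the class $[\beta]$, and the goal is to kill it by a symplectic isotopy supported in the interior of $V$.

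Next, I would construct the correcting isotopy using the non-degeneracy of $d\alpha$. Define the vector field $X$ on $V$ by the equation $\iota_X d\alpha = -\beta$; this is uniquely determined, and $X$ vanishes near $\partial V$ because $\beta$ does. Its time-$t$ flow $\psi_t$ is therefore defined for all $t \in [0,1]$, is the identity near $\partial V$, and is symplectic since $\mathcal{L}_X d\alpha = d\iota_X d\alpha = -d\beta = 0$. Set $\phi_t := \phi \circ \psi_t$; this is a smooth path of symplectomorphisms, each identity near $\partial V$, from $\phi_0 = \phi$ to the candidate $\phi_1 := \phi \circ \psi_1$.

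The core of the argument is then a short flux computation via Cartan's formula:
\[
\frac{d}{dt}\psi_t^*\alpha \;=\; \psi_t^*\bigl(d(\iota_X\alpha) + \iota_X d\alpha\bigr) \;=\; d\bigl(\psi_t^*(\iota_X\alpha)\bigr) - \psi_t^*\beta.
\]
Integrating over $t \in [0,1]$ shows that $\psi_1^*\alpha - \alpha$ differs from $-\int_0^1 \psi_s^*\beta \, ds$ by an exact form, and because each $\psi_s$ is smoothly isotopic to the identity, $\psi_s^*$ acts trivially on $H^1(V;\mathbb{R})$, giving $[\psi_1^*\alpha - \alpha] = -[\beta]$. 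Finally,
\[
\phi_1^*\alpha - \alpha \;=\; \psi_1^*(\alpha + \beta) - \alpha \;=\; (\psi_1^*\alpha - \alpha) + \psi_1^*\beta,
\]
whose cohomology class is $-[\beta] + [\beta] = 0$, so $\phi_1^*\alpha - \alpha$ is exact as required.

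The step requiring the most care is the flux computation above, but it is a standard application of Cartan's formula combined with the homotopy invariance of de Rham cohomology. The only remaining bookkeeping concern is keeping every map in the isotopy equal to the identity near $\partial V$; this is automatic since $X$, and hence its flow $\psi_t$, is compactly supported in the interior of $V$, which in turn is forced by $\beta$ vanishing near $\partial V$.
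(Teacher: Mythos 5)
Your proof is correct: the flux computation is done with consistent signs, the vector field $X$ defined by $\iota_X d\alpha = -\beta$ does vanish near $\partial V$ so the isotopy $\phi\circ\psi_t$ stays identity there, and the cancellation $[-\beta]+[\beta]=0$ goes through. The paper does not reproduce a proof (it cites Giroux via van Koert's notes), but your argument is precisely the standard flux argument given in that reference, so there is nothing to flag.
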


   Let $\phi^*\alpha - \alpha = dh$. Here $h: V \rightarrow \mathbb{R}$ is a function well defined up to addition by constants. Note that $dt + \alpha$ is a contact form on $\mathbb{R} \times V$, where the $t$ co-ordinate is taken along $\mathbb{R}$. Take the mapping torus $\mathcal{MT}(V,\phi)$ defined by the following map.
   
   \begin{alignat*}{2}
   \Delta: (\mathbb{R} \times V, dt + \alpha) &\longrightarrow& (\mathbb{R} \times V, dt + \alpha) \\
   (t,x) &\longmapsto& (t-h,\phi(x)) 
   \end{alignat*}
   
   \noindent The contact form $dt + \alpha$ then descends to a contact form $\lambda$ on $\mathcal{MT}(V,\phi)$. Since $\phi$ is identity near $\partial V$, a contact neighborhood of the boundary of $\mathcal{MT}(V,\phi)$ looks like $((-\frac{1}{2},0) \times \partial V \times \mathbb{S}^1, e^r\cdot\alpha|_{\partial V} + dt)$. Let $A(r, R)$ denote the annulur region given by $\{z \in \mathbb{C} \ | \ r < |z| < R\}$. Define $\Phi$ as follows.

   \begin{alignat*}{2}
   \Phi: \partial V \times A(\frac{1}{2},1) &\longrightarrow& (-\frac{1}{2},0) \times \partial V \times S^1 \\
   (v, r e^{it}) &\longmapsto& (\frac{1}{2} - r, v, t) 
   \end{alignat*}
   
   \noindent We then glue $\mathcal{MT}(V,\phi)$ and $\partial V \times \mathbb{D}^2$ using the map $\Phi$. Note that $\Phi$ pulls back $\lambda$ to $(e^{\frac{1}{2}- r} \cdot \alpha|_{\partial V} + dt)$ on $\partial V \times A(\frac{1}{2}, 1)$. One can then extend this pulled back form to the interior of $\partial V \times \mathbb{D}^2$ a by defining $\beta = h_1(r)\cdot \alpha|_{\partial V} + h_2(r)\cdot dt$, where $h_1$ and $h_2$ are as in Figure \ref{h1h2}. This defines a contact structure on $W^{2n+1} = \mathcal{MT}(V,\phi) \cup_{id} \partial V \times \mathbb{D}^2$ which coincides with $\lambda$ on $\mathcal{MT}(V,\phi)$ and with $\alpha + r^2 dt$ on $\partial V \times \mathbb{D}^2$. We denote the resulting contact manifold $(W^{2n+1},\beta)$ by $\mathcal{O}b(V,d\alpha;\phi)$.
   Note that the contactomorphism type of $\mathcal{O}b(V,d\alpha;\phi)$ is determined by $(V,d\alpha)$ and the symplectic isotopy class of $\phi$.
   
   \begin{figure}[htbp]
  	
  	\begin{tikzpicture}
  	\draw[thick,->] (0,0) -- (5,0) node[anchor=north east] {$r$};
  	\draw[thick,->] (0,0) -- (0,4) node[anchor=north east] {$h_1(r)$}; 
  	\draw (3.5 cm,1pt) -- (3.5 cm,-1pt) node[anchor=north] {$\frac{1}{2}$};
  	\draw[very thick] (0,3) -- (0.5,3);
  	\draw[very thick] (0.5,3) parabola (2,2.5);
  	\draw[very thick] (4.2,1.5) parabola (2,2.5);
  	\draw[black,thick,dashed] (3.5,0) -- (3.5,4);
  	\draw (0,3) node[anchor=east]{$h_1(0)$};
  	\draw[black,thick,dashed] (0,3) -- (4,3);

  	\draw[thick,->] (8,0) -- (13,0) node[anchor=north east] {$r$};
  	\draw[thick,->] (8,0) -- (8,4) node[anchor=north east] {$h_2(r)$}; 
  	\draw (11.5 cm,1pt) -- (11.5 cm,-1pt) node[anchor=north] {$\frac{1}{2}$};
  	\draw[very thick] (8,0) -- (8.5,0);
  	\draw[very thick] (8.5,0) parabola (9.5,2);
  	\draw[very thick] (10.5,3) parabola (9.5,2);
  	\draw[very thick] (10.5,3) -- (12,3);
  	\draw[black,thick,dashed] (11.5,0) -- (11.5,3.5);
  	\draw[black,thick,dashed] (8,3) -- (12,3);
  	
  	\end{tikzpicture}	
  	\caption[]{Functions for the contact form near binding}
  	\label{h1h2}
  \end{figure}
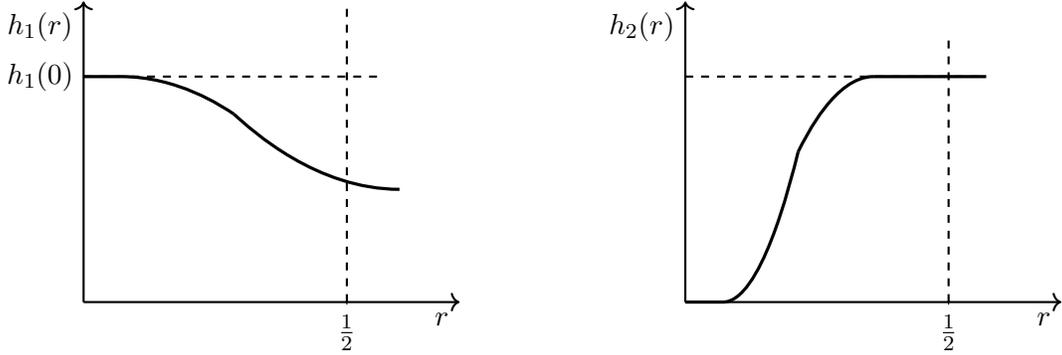

   \begin{definition}[Contact open book]
   	$\mathcal{O}b(V, d\alpha,\phi)$ is called a \emph{contact open book decomposition} with \emph{page} $(V,d\alpha)$ and \emph{binding} $(\partial V,\alpha)$. Given a manifold $M$ with a contact form $\beta$ on it, if one can find an open book $\mathcal{O}b(V_M,d\alpha_M;\phi_M)$ that is contactomorphic to $(M,\beta)$, then one says that $\mathcal{O}b(V_M,\alpha_M;\phi_M)$ is an open book decomposition of $M$ \emph{supporting} the contact form $\beta$. 
   	
   \end{definition}
   
   \noindent Given $(M,\xi)$, if there exists a contact form $\beta$ representing $\xi$ such that $(M,\beta)$ has a supporting open book, then we say that $(M,\xi)$ has a supporting open book. In particular, we will write $(M,\xi) = \mathcal{O}b(V_M,d\alpha_M;\phi_M)$ to say that $(M,\xi)$ is supported by an open book with page $(V_M,d\alpha_M)$ and monodromy $\phi_M$. Giroux \cite{Gi} has proved that every contact manifold admits a supporting open book.

   \begin{definition}[Contact open book embedding]\label{contact open book embed def}
   	$(M^{2n+1}_1,\xi_1)$ \emph{contact open book embeds} in $(M^{2N+1}_2,\xi_2)$ if there exist contact open books $\mathcal{O}b(\Sigma_i,d\alpha_i,\phi_i)$  supporting $(M_i^{2n+1},\xi_i)$, for $i = 1,2$, such that the following conditions hold.
   	
   	\begin{enumerate}
   		\item There exists a proper isosymplectic embedding $g : (\Sigma_1, \partial \Sigma_1, d\alpha_1) \rightarrow (\Sigma_2,\partial \Sigma_2, d\alpha_2)$, i.e., $g^*d\alpha_2 = d\alpha_1.$
   		\item$g \circ \phi_1 = \phi_2 \circ g$ upto contact isotopy.
   	\end{enumerate}  
   \end{definition}
   
   \noindent Definition \ref{contact open book embed def} implies that $\mathcal{MT}(\Sigma_1,\phi_1)$ isocontact embeds in $\mathcal{MT}(\Sigma_2,\phi_2)$. Since $g|_{\partial \Sigma_1}$ pulls back the contact form $\alpha_2$ to $u \cdot \alpha_1$, for some positive function $u$ on $\partial \Sigma_1$, we can extend this embedding to an isocontact embedding $\tilde{g}$ of $\mathcal{O}b(\Sigma_1,d\alpha_1,\phi_1)$ in $\mathcal{O}b(\Sigma_2,d\alpha_2,\phi_2)$ such that the restriction of $\mathcal{O}b(\Sigma_2,d\alpha_2,\phi_2)$ on the image $Im(\tilde{g})$ gives the contact open book $\mathcal{O}b(\Sigma_1,d\alpha_1,\phi_1)$.

   \subsection{Dehn-Seidel twist} \label{dehn twist} 
   
   Consider the symplectic structure on the cotangent bundle $(T^*\mathbb{S}^n,d\lambda_{can})$. Here, $\lambda_{can}$ is the canonical 1-form on $T^*\mathbb{S}^n$. In local coordinates $(x_1,x_2,...,x_{n+1},y_1,y_2,...,y_{n+1})$, $\lambda_{can}$ is given by the form $\sum y_idx_i$. Here, we regard $T^*\mathbb{S}^n$ as a submanifold of $\mathbb{R}^{2n+2} \cong \mathbb{R}^{n+1} \times \mathbb{R}^{n+1}$. A point $(\vec{x},\vec{y}) \in \mathbb{R}^{n+1} \times \mathbb{R}^{n+1}$, represents a point in $T^*\mathbb{S}^n$ if and only if it satisfies the relations: $|\vec{x}| = 1$ and $\vec{x} \cdot \vec{y} = 0$. Here, $\vec{y} \equiv (y_1,..,y_{n+1})$ and $\vec{x} \equiv (x_1,..,x_{n+1})$. Let $\sigma_t : T^*\mathbb{S}^n \rightarrow T^*\mathbb{S}^n$ be defined as follows.
   
   \begin{equation*}
   \sigma_t(\vec{x},\vec{y}) = \begin{pmatrix}
   \cos t&|\vec{y}|^{-1}\sin t\   \\  -|\vec{y}|\sin t&\cos t 
   \end{pmatrix}
   \begin{pmatrix}
   \vec{x} \ \\ \vec{y}
   \end{pmatrix}
   \end{equation*}

   \noindent For $k \in \mathbb{Z}_{>0}$, let $g_k : [0,\infty) \rightarrow \mathbb{R}$ be a smooth function that satisfies the following properties.
   
   \begin{enumerate}
   	\item $g_k(0) = k\pi$ and ${g'_k}(0) < 0$.
   	\item Fix $p_0 > 0$. The function $g_k(|\vec{y}|)$ decreases to $0$ at $p_0$ and then remains $0$ for all $\vec{y}$ with $|\vec{y}|>p_0$. See Figure \ref{dehntwistpic} below. 
   \end{enumerate}

   \begin{figure}[htbp]
   	
   	\begin{tikzpicture}
   	
   	\draw[thick,->] (0,0) -- (4,0) node[anchor=north east] {$|\vec{y}|$} ;
   	\draw[thick,->] (0,0) -- (0,4)node[anchor=north east] {$g_k$}; 
   	\draw[very thick] (0,3) parabola  (2,1);
   	\draw[very thick](3,0) parabola (2,1);
   	\draw[very thick] (3,0) -- (3.5,0);
   	\draw (3 cm,1pt) -- (3 cm,-1pt) node[anchor=north] {$p_0$};
   	
   	\draw (0,3) node[anchor=east] {$k\pi$};
   	
   	\end{tikzpicture}
   	\caption{Cut-off function for defining $k$-fold Dehn-Seidel twist.}	
   	\label{dehntwistpic}
   	
   \end{figure}
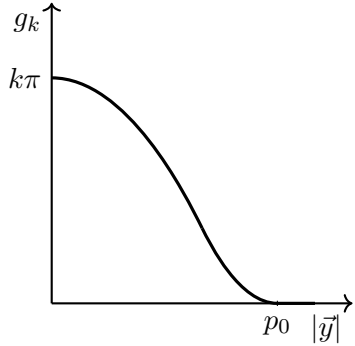

   Now we can define the \textit{positive $k$-fold Dehn-Seidel twist} as follows.
   
   $$\tau_k(\vec{x},\vec{y}) = \begin{cases}
   \sigma_{g_k(|\vec{y}|)}(\vec{x},\vec{y})  \  \   for \ \ \vec{y} \neq \vec{0}\\
   -Id  \  \ for \ \ \vec{y} = \vec{0}
   \end{cases}$$

   \noindent The Dehn-Seidel twist is a proper symplectomorphism of $T^*\mathbb{S}^n$. From Figure \ref{dehntwistpic}, we see that $\tau_k$ has compact support. Therefore, choosing $p_0$ properly, $\tau_k$ can be defined on the unit disk bundle $(DT^*\mathbb{S}^n,d\lambda_{can})$, such that it is identity near boundary. In fact, we can choose the support as small as we wish without affecting the symplectic isotopy class of the resulting $\tau_k$.

   Similarly, for $k<0$, we can define the \textit{negative $k$-fold Dehn-Seidel twist}. For $k = 0$, $\tau_0$ is defined to be the identity map of $DT^*\mathbb{S}^n$. Sometimes we may say just \emph{Dehn twist} instead of \emph{Dehn-Seidel twist}.
   
   \begin{exmp}
   	An important open book of $(\mathbb{S}^{2n+1},\xi_{std})$ is given with page $(DT^*\mathbb{S}^n,d\lambda^n_{can})$ and monodromy a positive Dehn-Seidel twist $\tau_n$. We call this the \emph{standard open book} of $\mathbb{S}^{2n+1}$.
   \end{exmp}

   \subsection{Lefschetz fibrations on Weinstein domains} \label{weinstein} We now recall the notion of an \emph{abstract Weinstein Lefschetz fibration} as defined by Giroux--Pardon \cite{GP}. For details on Weinstein manifolds/domains we refer to \cite{CE}.

   Let $\lambda$ be a $1$-form on a manifold $(V,\partial V)$ such that $\omega = d\lambda$ is symplectic. $\lambda$ is called a \emph{Liouville form} and the vector field $X$, defined by $\iota_X \omega = \lambda$, is called the \emph{Liouville field} of $\lambda$. The pair $(V,\lambda)$ is called an \emph{exact symplectic manifold}. One can also describe $(V,\lambda)$ by the triple $(V, \omega, X)$. The union of the components of $\partial V$ where $X$ points outward (inward) is denoted by $\partial_+V$($\partial_-V$). $\partial_+V$ ($\partial_-V$) is called the \emph{convex boundary} (\emph{concave boundary}) of the \emph{Liouville cobordism} given by $(V, \omega, X)$ (see Figure \ref{livcob}).

   \begin{figure}[htbp] 
   	\centering
   	\def\svgwidth{7cm}
\begingroup%
  \makeatletter%
  \providecommand\color[2][]{%
    \errmessage{(Inkscape) Color is used for the text in Inkscape, but the package 'color.sty' is not loaded}%
    \renewcommand\color[2][]{}%
  }%
  \providecommand\transparent[1]{%
    \errmessage{(Inkscape) Transparency is used (non-zero) for the text in Inkscape, but the package 'transparent.sty' is not loaded}%
    \renewcommand\transparent[1]{}%
  }%
  \providecommand\rotatebox[2]{#2}%
  \newcommand*\fsize{\dimexpr\f@size pt\relax}%
  \newcommand*\lineheight[1]{\fontsize{\fsize}{#1\fsize}\selectfont}%
  \ifx\svgwidth\undefined%
    \setlength{\unitlength}{381.98390621bp}%
    \ifx\svgscale\undefined%
      \relax%
    \else%
      \setlength{\unitlength}{\unitlength * \real{\svgscale}}%
    \fi%
  \else%
    \setlength{\unitlength}{\svgwidth}%
  \fi%
  \global\let\svgwidth\undefined%
  \global\let\svgscale\undefined%
  \makeatother%
  \begin{picture}(1,0.46325245)%
    \lineheight{1}%
    \setlength\tabcolsep{0pt}%
    \put(0,0){\includegraphics[width=\unitlength,page=1]{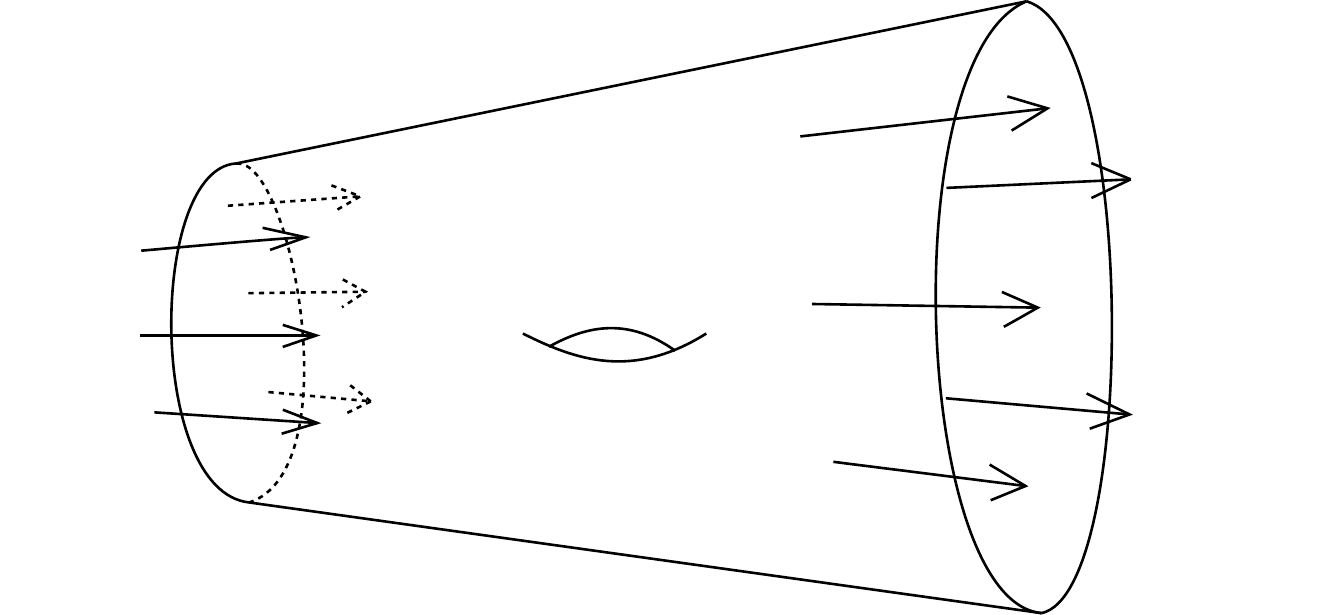}}%
    \put(0.4129622,0.25588482){\color[rgb]{0,0,0}\makebox(0,0)[lt]{\lineheight{1.25}\smash{\begin{tabular}[t]{l}$(V,\omega)$\end{tabular}}}}%
    \put(-0.00198138,0.09625504){\color[rgb]{0,0,0}\makebox(0,0)[lt]{\lineheight{1.25}\smash{\begin{tabular}[t]{l}$\partial_-V$\end{tabular}}}}%
    \put(0.83840145,0.08334475){\color[rgb]{0,0,0}\makebox(0,0)[lt]{\lineheight{1.25}\smash{\begin{tabular}[t]{l}$\partial_+V$\end{tabular}}}}%
    \put(0.14898531,0.2222814){\color[rgb]{0,0,0}\makebox(0,0)[lt]{\lineheight{1.25}\smash{\begin{tabular}[t]{l}$X$\end{tabular}}}}%
    \put(0.71007238,0.25251551){\color[rgb]{0,0,0}\makebox(0,0)[lt]{\lineheight{1.25}\smash{\begin{tabular}[t]{l}$X$\end{tabular}}}}%
  \end{picture}%
\endgroup%

   	\caption{Liouville cobordism with concave boundary $\partial_-V$ and convex boundary $\partial_+V$.}
   	\label{livcob}
   \end{figure}

   \begin{definition}[Weinstein manifolds and Weinstein domains]

   	A \emph{Weinstein manifold} $(W,\omega,X,\phi)$ is a symplectic manifold $(W,\omega)$ with a complete Liouville vector field $X$ which is gradient-like for an exhausting Morse function $\phi : W \rightarrow \mathbb{R}$. A \emph{Weinstein cobordism} $(W_1,\omega,X,\phi)$ is a Liouville cobordism $(W,\omega,X)$ whose Liouville field $X$ is gradient-like for a Morse function $\phi$ on $W_1$ which is constant on the boundary. A Weinstein cobordism with $\partial_-W_1 = \emptyset$ is called a \emph{Weinstein domain}. 
   	
   \end{definition}

   \begin{definition}[Abstract Weinstein Lefschetz fibration]
	An abstract Weinstein Lefschetz fibration is a tuple $W = (W_0 ; L_1 ,\dots, L_m)$
	consisting of a Weinstein domain $W_0^{2n}$ (the “central fiber”) along with a finite sequence of
	exact parameterized Lagrangian spheres $L_1,\dots, L_m \subset W_0$ (the “vanishing cycles”).
	
	Given an abstract Weinstein Lefschetz fibration $W = (W_0; L_1,\dots, L_m)$, one can construct a Weinstein domain $W^{2n+2}$ (its “total space”) by attaching critical Weinstein handles to the
	stabilization $W_0 \times \mathbb{D}^2$ along Legendrians $\Lambda_j \subset W_0 \times S^1 \subset \partial(W_0 \times \mathbb{D}^2)$ near $\frac{2\pi j}{m} \in \mathbb{S}^1$, obtained by lifting the exact Lagrangians $L_j$.
\end{definition}

So an abstract Weinstein Lefschetz fibration $W^{2n+1}$ is a Lefschetz fibration over $\mathbb{D}^2$ with fiber $W_0^{2n}$ and monodromy positive Dehn twists along the vanishing cycles: $L_1,\cdots L_m.$

\begin{definition}[Achiral Lefschetz fibration] \label{achiral LF}
	An achiral Lefschetz fibration $V^{2n+2}$ is just like a Lefschetz fibrtion over $\mathbb{D}^2$ with Weinstein page $V^{2n}_0$. But the monodromy $\phi$ can be represented by composing both positive and negative Dehn-Seidel twists along some Lagrangian spheres in $V^{2n}_0$.
	
	We denote such an achiral Lefschetz fibration by $LF(V^{2n}_0,\phi)$.

\end{definition} 

 \begin{definition}[Deformation equivalence]
	
	Two Weinstein domains $(W_1,J_1,\phi_1)$ and $(W_2,J_2,\phi_2)$ are called deformation equivalent (or can be \emph{deformed} from one to another) if there is a diffeomorphism $h : W_1 \rightarrow W_2$ such that the Weinstein structures $(J_1,\phi_1)$ and $(h^*J_2, h^*\phi_2)$ on $W_1$ are homotopic.
	
\end{definition}

Giroux and Pardon have proved that every Weinstein domain can be deformed to an abstract Weinstein Lefschetz fibration.

\begin{theorem}[Giroux--Pardon,\cite{GP}] \label{GP}
	
	Let $W$ be a Weinstein domain. There exists an abstract Weinstein Lefschetz fibration $\tilde{W} = (W_0; L_1,\dots, L_m)$ whose total space (which we again denote by $\tilde{W}$) is deformation equivalent to $W.$
	
\end{theorem}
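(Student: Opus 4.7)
The plan is to adapt Donaldson's asymptotically holomorphic technique for Lefschetz pencils on closed symplectic manifolds to the Weinstein (with boundary) setting. Starting from a Weinstein domain $(W,\omega,X,\phi)$, I would first fix an $\omega$-compatible almost complex structure $J$ that is adapted to the Liouville flow near $\partial W$, so that $J$ is standardized on a collar and the contact structure on $\partial_+ W$ is $J$-invariant. Using the Liouville completion, $W$ can be extended slightly so that the Lefschetz fibration eventually constructed can be arranged to behave in a controlled way near the boundary.

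The core geometric input is a hermitian line bundle $(L,\nabla)\to W$ with curvature $-i\omega$. Following Donaldson and Auroux, I would construct, for $k$ sufficiently large, a pair of asymptotically $J$-holomorphic sections $(s_0,s_1)$ of $L^{\otimes k}$ that are uniformly transverse in the Donaldson sense. The meromorphic ratio $f=s_0/s_1$ then defines, off a codimension-$4$ base locus, a map to $\mathbb{CP}^1$ whose only critical points are of Lefschetz type. Restricting to an appropriate disc in $\mathbb{CP}^1$, and exploiting the standardized collar of $W$ to make $f$ a trivial fibration near $\partial W$, produces a Lefschetz fibration $f\colon W\to\mathbb{D}^2$ whose generic fiber $W_0$ inherits a Weinstein structure from the symplectic reduction of $(W,\omega)$ together with the restricted Liouville field.

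Third, I would read off the vanishing cycles. Near each critical value of $f$, the local model $(z_1,\dots,z_{n+1})\mapsto\sum z_i^2$ yields a Lagrangian vanishing sphere in the nearby regular fiber by parallel transport along a vanishing path; choosing a system of arcs from a base point to the critical values gives the exact parameterized Lagrangian spheres $L_1,\dots,L_m\subset W_0$. This realizes $W$ as the total space of an abstract Weinstein Lefschetz fibration $(W_0;L_1,\dots,L_m)$. Finally, to establish deformation equivalence, I would compare the Weinstein handle decomposition associated to this Lefschetz fibration (stabilize $W_0\times\mathbb{D}^2$ and attach critical Weinstein handles along the Legendrian lifts of $L_j$) with a Weinstein handle decomposition of $(W,\omega,X,\phi)$ coming from $\phi$ itself, using the flexibility of Weinstein structures (Cieliebak--Eliashberg) to absorb the subcritical handle data into the stabilization $W_0\times\mathbb{D}^2$.

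The main obstacle is the estimated transversality step in the with-boundary, non-compact setting. Donaldson's original arguments and Auroux's pencil refinement are global constructions on closed symplectic manifolds; carrying out the perturbative scheme while keeping $(s_0,s_1)$ in a standard form on the Liouville collar, so that the resulting $f$ is a genuine fibration near $\partial W$ and the Weinstein structure on the fibers extends smoothly, requires careful uniform estimates in the cylindrical end. The secondary technical point is to organize the vanishing cycles and the corresponding handle attachments so that the reconstructed total space is Weinstein homotopic, not merely diffeomorphic, to $(W,\omega,X,\phi)$.
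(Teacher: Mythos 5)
This theorem is not proved in the paper at all: it is imported verbatim from Giroux--Pardon \cite{GP}, so there is no internal argument to compare yours against, and your proposal has to be judged against the original. At the level of strategy you have the right flavor --- a Donaldson-type quantitative transversality construction producing a pair of sections whose ratio is a Lefschetz fibration, followed by reading off vanishing cycles via parallel transport. But the route you describe is not the one \cite{GP} actually takes, and the difference matters. Giroux and Pardon do not run Donaldson's asymptotically holomorphic perturbation scheme directly on the Weinstein domain with its compatible almost complex structure; they first invoke the Cieliebak--Eliashberg existence theorem to deform the Weinstein structure to a Stein structure, and then work with genuinely holomorphic functions (the line bundle is trivial since $\omega$ is exact), proving a quantitative transversality statement for pairs of holomorphic functions with bounds measured against $k\phi$ for the plurisubharmonic exhaustion $\phi$. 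Passing to the holomorphic category is precisely what lets them control the behavior at the boundary and in the completion --- the issue you correctly single out as the ``main obstacle'' --- because the abundance of holomorphic functions on a Stein manifold replaces the delicate localized perturbation arguments that Donaldson needs on a closed manifold.

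As written, then, your proposal has a genuine gap rather than being a complete alternative proof: the entire analytic core (uniform estimated transversality for asymptotically holomorphic sections on a manifold with boundary, while keeping the sections in standard form on the collar so that the map is a trivial fibration near $\partial W$) is deferred, and it is far from clear that it can be carried out in the almost complex setting without essentially redoing the Stein reduction. The final step is also underspecified: deformation equivalence of the total space of the abstract fibration $(W_0;L_1,\dots,L_m)$ with $(W,\omega,X,\phi)$ is not a matter of ``absorbing subcritical handles''; one must match the Weinstein handle decomposition induced by the Lefschetz fibration (subcritical part $W_0\times\mathbb{D}^2$ plus critical handles along the Legendrian lifts of the $L_j$) with the given Weinstein structure up to homotopy, which in \cite{GP} is a separate argument. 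If you want to pursue this, I would recommend either following the Stein reduction or making precise exactly which uniform estimates you need on the cylindrical end and why they hold.
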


Thus, Theorem \ref{GP} shows that any manifold bounding a Weinstein domain has an induced open book decomposition coming from an abstract Weinstein Lefschetz fibration on the domain.

\section{Open book embedding of $k$-connected manifolds in $\mathbb{S}^{2n-k}$}\label{k-connect sect}

In this section, we prove Theorem \ref{open book thm 1}. We want to use a relative version of Theorem \ref{hh embedding} and Theorem \ref{hh isotopy}, which can be stated as follows. 

\begin{lemma}\label{relative HH isotopy}
	Let $(V^n, \partial V^n)$ be a $k$-connected manifold. Let $V_0$ denote the manifold obtained by removing a small $n$-disk from the interior of $V$.
	
	\begin{enumerate}
		
		\item If $V_0$ can be immersed in $\mathbb{R}^{2n-k-1}$ with a normal vector field, then $V^n$ admits a proper embedding in $(\mathbb{D}^{2n-k}, \partial \mathbb{D}^{2n-k})$.
		
		\item There is a $1-1$ correspondence between the isotopy classes of proper embeddings of $V$ in $\mathbb{D}^{2n-k}$ and the regular homotopy classes of proper immersions of $V_0$ in $(\mathbb{D}^{2n-k}, \partial \mathbb{D}^{2n-k})$ with a normal vector field.
		
		\item Any two proper embeddings $f$ and $g$ of $V^n$ in $\mathbb{D}^{2n-k+1}$ are isotopic. 
		
	\end{enumerate}
	
\end{lemma}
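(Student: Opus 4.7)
The plan is to prove each part by repeating the original Haefliger--Hirsch arguments in the proper category, with $(\mathbb{D}^N,\partial\mathbb{D}^N)$ playing the role of $\R^N$. The key technical input is the relative analogues of Theorem \ref{hh 3.1} and Theorem \ref{hh 3.2}, which should hold verbatim once one replaces maps/immersions/embeddings by proper ones of pairs: the obstructions appearing in the original proofs live in cohomology groups whose vanishing depends only on dimensions and connectivity, and these are unchanged by passing to the relative setting, provided all general position and handle-cancellation arguments are carried out inside a collar neighborhood of $\partial V$ where everything is kept fixed. Note that since $V$ is $k$-connected, $V_0$ is $k$-connected as well and $(V,V_0)$ is $(n-1)$-connected, so the connectivity hypotheses needed for the deformation arguments in Theorem \ref{hh 3.2} are automatically satisfied in the range $k<(n-4)/2$.

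For (1), starting from a proper immersion of $V_0$ in $\R^{2n-k-1}$ with a normal vector field, apply the relative form of Theorem \ref{hh 3.1}(1) to obtain a proper embedding $f_0$ of $V_0$ in $\mathbb{D}^{2n-k-1}$, which we view as the equatorial disk inside $\mathbb{D}^{2n-k}$, arranged so that $f_0(\partial V)\subset \partial \mathbb{D}^{2n-k}$. Use the normal vector field to extend $f_0$ to a continuous proper map of $V=V_0\cup D^n$ into $\mathbb{D}^{2n-k}$ by capping off the removed $n$-disk across the equator into the interior of $\mathbb{D}^{2n-k}$. Finally, apply the relative form of Theorem \ref{hh 3.2}(1) with $E$ the open $n$-disk filling in $V_0$ and $X$ the complement in $\mathbb{D}^{2n-k}$ of a compact subset of $f_0(V_0)$, whose connectivity is high enough since $k<(n-4)/2$, to deform this extended map into a proper embedding without disturbing things near $\partial V$.

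For (2), the correspondence sends a proper embedding $f\colon V\hookrightarrow \mathbb{D}^{2n-k}$ to the proper immersion $f|_{V_0}$ together with the normal vector field obtained from the direction used to cap off the removed disk; part (1) provides surjectivity, and injectivity follows by applying the relative form of Theorem \ref{hh 3.2}(2) to extend an isotopy of the $V_0$-parts (furnished by Theorem \ref{hh 3.1}(2) together with the normal vector field matching condition) across the capping disk. For (3), any two proper embeddings of $V_0$ in $\mathbb{D}^{2n-k+1}$ are regularly homotopic through embeddings by the relative form of Theorem \ref{hh 3.1}(2), hence isotopic, and the isotopy extends over the filling disk by a second application of Theorem \ref{hh 3.2}(2), since the one extra codimension makes the relevant complement still sufficiently connected.

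The main obstacle is verifying that the general position and engulfing steps in the proofs of Theorems \ref{hh 3.1}--\ref{hh 3.2} can be performed relative to a collar of $\partial V$ — that is, that one can always perturb maps, regular homotopies, and isotopies to be the identity on a neighborhood of $\partial V$ while retaining the desired transversality and embedding properties. Once this is granted, the connectivity bookkeeping is identical to the closed case, and all three statements follow by the same arguments.
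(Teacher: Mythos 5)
Your overall strategy---rerun the Haefliger--Hirsch arguments wholesale in the proper category, taking relative analogues of Theorems \ref{hh 3.1} and \ref{hh 3.2} as given---is not what the paper does, and as written it contains a genuine gap: those relative analogues are precisely the content that needs proving, and your proposal explicitly defers them (``once this is granted \dots''). Observing that the obstruction groups ``depend only on dimensions and connectivity'' does not address the real difficulty, namely that the general-position, engulfing, and normal-vector-field arguments of \cite{HH} must all be carried out while fixing a collar of $\partial V$; you correctly identify this as the main obstacle but do not resolve it. In addition, your capping-off step in part (1) silently skips the Alexander-duality/Hurewicz computation showing that the boundary of the removed $n$-disk is null-homotopic in the complement of the rest of the image --- that computation is the heart of the proof of Theorem \ref{hh embedding} and would have to be redone in the disk, where the duality statement changes.

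The paper avoids all of this by \emph{reducing to the closed, absolute statements} instead of relativizing them. For (1), it embeds all of $V$ in the boundary sphere $\mathbb{S}^{2n-k-1}=\partial\mathbb{D}^{2n-k}$ via Theorem \ref{hh embedding} and then pushes the interior of $V$ into the interior of the disk along the radial field $\psi R$, with $\psi$ vanishing near $\partial V$; the target dimension in the lemma is one higher than in the closed theorem, which is exactly what makes this push-in possible. For (2) and (3), given proper embeddings $g_1,g_2$ agreeing near $\partial V$, it forms the double $D(V)$ --- a closed $k$-connected manifold --- and the embeddings $\iota_i=f_0\cup g_i$ of $D(V)$, applies the closed-case results (Theorems \ref{hh 3.1} and \ref{hh isotopy}, and the dimension-shifted analysis for (3)) to produce an isotopy that can be arranged to fix a neighborhood of $N(\partial V)\cup f_0(V)$, and restricts that isotopy back to $V$. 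If you wish to keep your route, you must actually prove the relative versions of Theorems \ref{hh 3.1} and \ref{hh 3.2}; the doubling trick is the cheaper path and is the one the paper takes.
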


The proof of Lemma \ref{relative HH isotopy} uses ideas similar to the proof of Theorem \ref{hh embedding} and Theorem \ref{hh isotopy} work. Let us review the main steps behind these proofs from \cite{HH}.

\subsection{Proofs of Theorem \ref{hh embedding} and Theorem \ref{hh isotopy}}

\begin{proof}[Proof of Theorem \ref{hh embedding}:] 
	Recall that $M^n_0 = M^n \setminus \{x_0\}$, where $x_0$ is a point in $M^n$. Let $f_0 : M^n_0 \rightarrow \mathbb{R}^{2n-k-1}$ be an immersion with a normal vector field $v$. By Theorem \ref{hh 3.1}, $f$ is regularly homotopic to an embedding. So we may assume $f_0$ to be an embedding. Let $D_i$ be an embedded closed disk of radius $i$ around a point $x_0 \in M^n$ for $i = 1,2$. Let $M_i = M \setminus int(D_i)$. The claim is that $f(\partial M_1)$ is an $(n-1)$-sphere homotopic to zero in $X = \mathbb{R}^{2n-k-1} \setminus f(M_2)$.

	Consider an $\epsilon$-tubular neighborhood
	of $f(M_1)$ in $\mathbb{R}^{2n-k-1}$. Let $\lambda : M \rightarrow [0,\epsilon]$ be a smooth function that has value $\epsilon$ on $M_2$ and is $0$ on $D_1$. Then $f(\partial M_1)$ bounds the image of $M_1$ under the map $f_v : M \rightarrow \mathbb{R}^{2n-k-1}$ given by $x \mapsto f(x) + \lambda(x)v(x).$ Thus, $f(\partial M_1)$ is null-homologous in $X$.

	\begin{figure}[htbp]
		
		\def\svgwidth{10.5cm}
\begingroup%
  \makeatletter%
  \providecommand\color[2][]{%
    \errmessage{(Inkscape) Color is used for the text in Inkscape, but the package 'color.sty' is not loaded}%
    \renewcommand\color[2][]{}%
  }%
  \providecommand\transparent[1]{%
    \errmessage{(Inkscape) Transparency is used (non-zero) for the text in Inkscape, but the package 'transparent.sty' is not loaded}%
    \renewcommand\transparent[1]{}%
  }%
  \providecommand\rotatebox[2]{#2}%
  \newcommand*\fsize{\dimexpr\f@size pt\relax}%
  \newcommand*\lineheight[1]{\fontsize{\fsize}{#1\fsize}\selectfont}%
  \ifx\svgwidth\undefined%
    \setlength{\unitlength}{523.38968493bp}%
    \ifx\svgscale\undefined%
      \relax%
    \else%
      \setlength{\unitlength}{\unitlength * \real{\svgscale}}%
    \fi%
  \else%
    \setlength{\unitlength}{\svgwidth}%
  \fi%
  \global\let\svgwidth\undefined%
  \global\let\svgscale\undefined%
  \makeatother%
  \begin{picture}(1,0.44651273)%
    \lineheight{1}%
    \setlength\tabcolsep{0pt}%
    \put(0,0){\includegraphics[width=\unitlength,page=1]{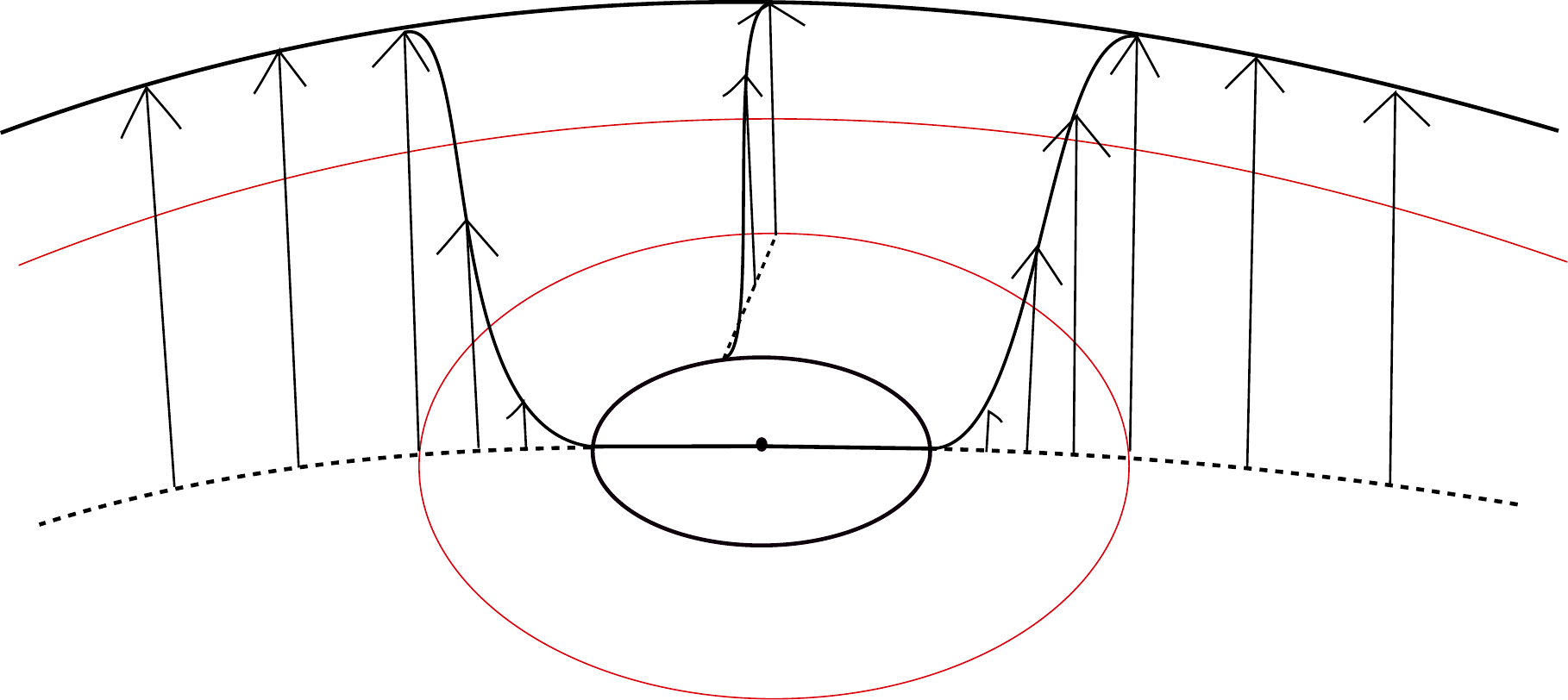}}%
    \put(0.46019438,0.13817485){\color[rgb]{0,0,0}\makebox(0,0)[lt]{\lineheight{1.25}\smash{\begin{tabular}[t]{l}$x_0$\end{tabular}}}}%
    \put(0.48110716,0.18345491){\color[rgb]{0,0,0}\makebox(0,0)[lt]{\lineheight{1.25}\smash{\begin{tabular}[t]{l}$D_1$\end{tabular}}}}%
    \put(0.73114679,0.09688106){\color[rgb]{0,0,0}\makebox(0,0)[lt]{\lineheight{1.25}\smash{\begin{tabular}[t]{l}$X = M \setminus D_2$\end{tabular}}}}%
    \put(0.7269576,0.28281984){\color[rgb]{0,0,0}\makebox(0,0)[lt]{\lineheight{1.25}\smash{\begin{tabular}[t]{l}$\lambda(x)v(x)$\end{tabular}}}}%
  \end{picture}%
\endgroup%

		\caption{Isotoping the embedding of $M_1$ by the vector field $\lambda(x) \cdot v(x)$. The region enclosed between the two red curves is $X = M \setminus D_2$.}
		\label{hpic}
		
	\end{figure}

	\noindent By the Poincare duality and the Alexander duality, $H_i(X) \cong H_{k+i+2-n}(M)$ (with $\mathbb{Z}$-coefficients). Since $M$ is $k$-connected, $H_i(X) = 0$ for $i \leq n-2$. Thus, by the Hurewicz isomorphism between $\pi_{n-1}(X)$ and $H_{n-1}(X)$ we get that that $f(\partial M_1)$ is null-homotopic in $X$.
	
	One can therefore, extend the map $f|_{M_1}$ to a map $g : M \rightarrow \mathbb{R}^{2n-k-1}$ such that $g(M_2) \cap g(int(D_2)) = \emptyset$. An application of Theorem \ref{hh 3.2} then leads to an embedding of $D_2$ in $X$ relative to boundary. Together with the embedding $f|_{M_2}$, this gives an embedding of $M$ in $\mathbb{R}^{2n-k-1}$.

\end{proof}

Next we discuss the proof of Theorem \ref{hh isotopy}. We shall continue with the notations used above.

\begin{proof}[Proof of Theorem \ref{hh isotopy}:]
	
	First one shows that Given an embedding $f : M \rightarrow \mathbb{R}^{2n-k}$, one can associate a normal vector field $v$ on $f(M_0)$ such that $f_v(M)$ is null-homologous in $X = \mathbb{R}^{2n-k-1} \setminus f(M_2)$. Here, $f_v : M \rightarrow \mathbb{R}^{2n-k-1}$ is an embedding which is equal to $f(x) + \lambda(x)v(x)$ for $x \in M_1$ and equal to $f(x)$ for $x \in D_1$. 
	
	An argument similar to the previous proof shows that $X$ is $(n-1)$-connected and $\pi_n(x) = H_n(X) = H^{n-k-1}(M_2)$. If $v_1$ and $v_2$ are two normal (to $f(M_2)$) vector fields, then the difference class $d(v_1,v_2) \in H^{n-k-1}(M_2)$ corresponds to the homology class $[f_{v_1}(M)] - [f_{v_2}(M)] \in H_n(X)$. Also, the homotopy classes of normal vector fields on $f(M_0)$ are in $1-1$ correspondence with $H^{n-k-1}(M_0) = H_n(X)$. Therefore, up to homotopy, there is only one vector field such that $f_v(M)$ is null-homologous in $X$.
	
	The next step is to show that this correspondence is $1-1$. We skip the proof of surjectivity and focus only on the proof of injectivity.
	
	Let $f_1, f_2 : M \rightarrow \mathbb{R}^{2n-k-1}$ be two embeddings with corresponding vector fields $v_1$ and $v_2$, respectively. If $(f_1|_{M_0}, v_1)$ and $(f_2|_{M_0}, v_2)$ are regularly homotopic, then by Theorem \ref{hh 3.1}, they can be assumed to be isotopic. Let $h_t$ be an isotopy joining $f_1|_{M_0}$ and $f_2|_{M_0}$, and let $u_t$, be a normal vector field of $h_t(M_0)$ joining $v_1$ and $v_2$. An isotopy of $h_0(M_0)$ can be extended to an isotopy of $\mathbb{R}^{2n-k-1}$. So one can assume that $f_1$ and $f_2$ agree on $M_1$ and that $v_1 = v_2$. Now observe that $f_{v_1}(M)$ and $f_{v_2}(M)$ are homologous in $X$. Therefore, by similar arguments as before $f_{v_1}(M)$ and $f_{v_2}(M)$ are homotopic in $X$. Thus, $f_1|_{D_1}$ and $f_2|_{D_1}$ are homotopic in $X$. Therefore, by Theorem \ref{hh 3.2} they are isotopic in $X$ by an isotopy fixed on a neighborhood of $\partial D_1$. Hence, $f_1$ and $f_2$ are isotopic.
	
\end{proof}

We now prove Lemma \ref{relative HH isotopy}.

\begin{proof}[Proof of Lemma \ref{relative HH isotopy}]
	
	\begin{enumerate}
		\item First we embed $V$ in the boundary $(2n-k-1)$-sphere of $\mathbb{D}^{2n-k}$ by Theorem \ref{hh embedding}. Let $f_0$ be the embedding. Now define a smooth function $\psi : V \rightarrow [0,\epsilon]$ such that $\psi$ is zero in a neighborhood of $\partial V$. Let $R$ be the radial unit vector field on $\mathbb{D}^{2n-k}$ pointing inward from boundary. Let $r_t$ denote the time $t$ flow of the vector field $\psi R$. Then $r_1\circ f_0$ gives the required embedding. In other words, we just \emph{pushed} $f_0(V)$ into the interior of $\mathbb{D}^{2n-k}$ relative to its boundary.
		
		\item Let $g_1,g_2$ be two proper embeddings of $V$ in $\mathbb{D}^{n-k}$ such that $g_1$ and $g_2$ agree on a neighborhood of $\partial V$. We assume that $g_1$ and $g_2$ are transverse to the boundary $\partial D^{2n-k}$. Recall the embedding $f_0$ as in $(1)$ above. Then $\iota_1 = f_0 \cup g_1$ and $\iota_2 = f_0 \cup g_2$ are embeddings of the double, $D(V)$, of $V$. Let $v_1$ and $v_2$ be two corresponding normal vector fields on $\iota_1(D(V))$ and $\iota_2(D(V))$. Since $\iota_1 = \iota_2$ on a neighborhood $N(\partial V)$ of $\partial V \subset V$ and on the image of $f_0$, we may assume that $v_1 = v_2$ on $\mathcal{N} = N(\partial V) \cup f_0(V)$.
		
		If $g_1$ and $g_2$ are regularly homotopic, relative to $\mathcal{N}$ and with homotopic normal vector fields then by Theorem \ref{hh 3.1} $(\iota_1|_{D(V)_0},v_1)$ and $(\iota_1|_{D(V)_0},v_2)$ are isotopic. Moreover, this isotopy can be taken to be identity near $\mathcal{N}$. Therefore, we can proceed as in the proof of Theorem \ref{hh isotopy} to show that $\iota_1$ and $\iota_2$ are isotopic in $\mathbb{D}^{2n-k}$ via an isotopy $\Phi_t$. Then, $\Phi_t|_V$ gives us the required isotopy between $g_1$ and $g_2$.
		
		The surjectivity case can also be dealt with similarly. For details see \cite{HH}.
		
		\item We observe what happens to the proof of Theorem \ref{hh isotopy} when we look at an embedding $f$ of $M^n$ in $\mathbb{R}^{2n-k+1}$, instead of $\mathbb{R}^{2n-k}$. Let $Y = \mathbb{R}^{2n-k+1} \setminus f(M)$. Note that $Y$ is then $n$-connected and the homotopy classes of normal vector fields on $M_0$ are in $1-1$ correspondence with $H^{n-k}(M_0) = 0$. Let $f_1,f_2$ be two such embeddings. By Smale-Hirsch immersion theory the obstruction to a regular homotopy between $(f_1|_{M_0},v_1)$ and $(f_2|_{M_0},v_2)$ lies in the group $H^i(M_0;\pi_iSt(2n-k+1,n+1))$ for $1\leq i \leq n$. Since $M_0$ is homotopic to its $(n-k-1)$-skeleton and $St(2n-k+1,n+1)$ is $(n-k-1)$-connected, these obstructions vanish. Therefore, $f_1$ and $f_2$ are isotopic on $M_0$. Since $H_n(Y) = 0$, both $f_{v_1}(M)$ and $f_{v_2}(M)$ are null-homologous in $Y$. Arguing as in Theorem \ref{hh isotopy}, we then see that $f_1|_{D_1}$ and $f_2|_{D_1}$ are homotopic relative to boundary. Thus, by Theorem \ref{hh 3.2} we see that $f_1$ and $f_2$ are isotopic. Using arguments similar to $(1)$ and $(2)$ above, one can then show that any two proper embeddings of $(V^n,\partial V)$ in $(\mathbb{D}^{2n-k+1},\partial \mathbb{D}^{2n-k+1})$ are isotopic.
	\end{enumerate}
	
\end{proof}

\begin{proof}[\textbf{Proof of Theorem \ref{open book thm 1}}]
	Say, $M^n = \mathcal{O}b(V^{n-1},\phi)$, where the open book is obtained by the Winkelnkemper construction. Since $M$ is $k$-connected, by Lemma \ref{le wink}, $V^{n-1}$ is $k$-connected. Using $(1)$ of Lemma \ref{relative HH isotopy}, take a proper embedding $f : (V^{n-1},\partial V^{n-1}) \rightarrow (\mathbb{D}^{2n-k-1},\partial \mathbb{D}^{2n-k-1})$. By $(3)$ of Lemma \ref{relative HH isotopy}, $f$ and $f \circ \phi$ are isotopic (relative to boundary) embeddings of $(V^{n-1},\partial V^{n-1})$ in $(\mathbb{D}^{2n-k-1},\partial \mathbb{D}^{2n-k-1})$. By the isotopy extension theorem, there exists an isotopy  $\Phi_t$ of $(\mathbb{D}^{2n-k-1},\partial \mathbb{D}^{2n-k-1})$ for $t \in [0,1]$, such that $\Phi_0 = id$ and $\Phi_1 \circ f = f \circ \phi$. Moreover, $\Phi_t$ can be arranged to be identity near boundary. Hence, $\mathcal{O}b(V^{n-1},\phi)$ open book embeds in $\mathcal{O}b(\mathbb{D}^{2n-k-1},\Phi_1) \cong \mathcal{O}b(\mathbb{D}^{2n-k-1}, id) \cong \mathbb{S}^{2n-k}$.  
\end{proof}	

\begin{remark}
	Note that for manifolds which admit open book decomposition with identity monodromy, one can get open book embedding in $S^{2n-k-1} \cong \mathcal{O}b(D^{2n-2k-2}, id)$. If $N^{n-1}$ is a manifold with boundary, then $\partial(N \times D^2)$ admits such an open book with page $N^{n-1}$.
\end{remark}

\section{Smooth open book embedding of manifolds bounding an abstract Weinstein achiral fibration}\label{stein sect}

   In this section we prove Theorem \ref{top sein embed}. We prove it for the case when the achiral Lefschetz fibration has monodromy consisting of only positive Dehn--Seidel twist. The general case is completely analogous. The following lemma is our main ingredient.
   
   \begin{lemma}\label{top sein lemma}
   	Let $L \subset (W^{2n},d\lambda_W)$ be a Lagrangian $n$-sphere and let $\tau_L$ denote the Dehn-Seidel twist along $L$. Then, $(W^{2n};\tau_L)$ admits an embedding $g$ in $\mathbb{D}^{2\lfloor \frac{3n}{2} \rfloor + 2}$ such that there exists an isotopy $\psi_t$ ($t \in [0,1]$) of $\mathbb{D}^{2\lfloor \frac{3n}{2} \rfloor + 2}$, relative to boundary, whose time $1$-map induces the map $\tau_L$ on the embedded $W^{2n}$ and $g(W^{2n}) = \psi_1 \circ g(W^{2n}).$ 
  
   \end{lemma}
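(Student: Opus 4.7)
The plan is to reduce the lemma to a relative embedding/isotopy problem concentrated near the Lagrangian $L$. Since $\tau_L$ is supported in a Weinstein neighborhood of $L$ (which is diffeomorphic to $DT^*S^n$), outside this neighborhood $\tau_L$ is the identity, and the ambient isotopy can be constructed to be supported near the image of that neighborhood.

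First, by the Weinstein Lagrangian neighborhood theorem I would fix a symplectic tubular neighborhood $U$ of $L$ in $W$ with $U$ symplectomorphic to $(DT^*S^n, d\lambda_{\mathrm{can}})$, chosen small enough that $\tau_L$ has support contained in $U$ and is the identity on $W \setminus U$.

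Second, I would produce a \emph{model embedding} $\iota_0 : DT^*S^n \hookrightarrow \mathbb{D}^{2\lfloor 3n/2 \rfloor + 2}$ together with an ambient isotopy $\Psi_t$ of $\mathbb{D}^{2\lfloor 3n/2 \rfloor + 2}$, supported in a compact neighborhood of $\iota_0(DT^*S^n)$, such that $\Psi_1 \circ \iota_0 = \iota_0 \circ \tau_L$. The key point is that $DT^*S^n$ is a $2n$-dimensional manifold with boundary which is $(n-1)$-connected (it retracts to $S^n$). Applying Lemma \ref{relative HH isotopy}(3) with $V = DT^*S^n$ and $k = n-1$ gives that any two proper embeddings of $DT^*S^n$ in $\mathbb{D}^{2(2n)-(n-1)+1} = \mathbb{D}^{3n+2}$ are proper-isotopic relative to boundary; in particular $\iota_0$ and $\iota_0 \circ \tau_L$ are proper-isotopic, and the isotopy extension theorem produces $\Psi_t$. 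For odd $n$, when $2\lfloor 3n/2 \rfloor + 2 = 3n+1$, I would instead use Lemma \ref{relative HH isotopy}(2) to classify the embeddings by their normal vector field class and check that the Dehn--Seidel twist preserves this class (using that $\tau_L$ is smoothly isotopic to the identity after doubling, via the Picard--Lefschetz description).

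Third, I would extend the model embedding $\iota_0 \colon U \hookrightarrow \mathbb{D}^{2\lfloor 3n/2 \rfloor + 2}$ to a proper embedding $g \colon W \hookrightarrow \mathbb{D}^{2\lfloor 3n/2 \rfloor + 2}$ arranged so that $g(W \setminus U)$ is disjoint from the compact support of $\Psi_t$. This is a relative embedding problem for the manifold-with-boundary $W \setminus U$, with the boundary embedding $\iota_0|_{\partial U}$ prescribed. The codimension is $\lfloor n/2 \rfloor + 2$, so using the Weinstein handle decomposition of $W$ (all handles of index $\leq n$) attached to a collar of $\partial U$, one positions each handle in the complementary directions of the disk away from the support of $\Psi_t$; this is a general-position argument built on a relative Haefliger--Hirsch extension. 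With this arranged, one extends $\Psi_t$ by the identity outside its support to obtain an isotopy $\psi_t$ of $\mathbb{D}^{2\lfloor 3n/2 \rfloor + 2}$ (relative to boundary) with $\psi_1|_{g(W\setminus U)} = \mathrm{id}$ and $\psi_1|_{g(U)} = g \circ \tau_L \circ g^{-1}$, giving $\psi_1 \circ g = g \circ \tau_L$ and $g(W) = \psi_1(g(W))$.

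The main obstacle is Step 3 — extending the fixed model embedding of $U$ to a global embedding of $W$ while keeping the rest of $W$ outside the support of $\Psi_t$, particularly in the odd $n$ case where the codimension is tighter. The two secondary obstacles are (i) verifying that $\iota_0$ and $\iota_0 \circ \tau_L$ really are proper-isotopic in the tight target dimension for odd $n$ (which hinges on the normal vector field obstruction vanishing for Dehn twists), and (ii) ensuring that the isotopy extension can be localized near $\iota_0(DT^*S^n)$ so that it does not disturb $g(W \setminus U)$.
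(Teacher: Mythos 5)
Your overall skeleton (localize at a Weinstein neighborhood $U\cong DT^*\mathbb{S}^n$ of $L$, realize $\tau_L$ by an ambient isotopy concentrated near the image of $U$, and keep $g(W\setminus U)$ out of the way) matches the spirit of the paper, but the mechanism you propose for the two hard steps is different from the paper's and, as written, has genuine gaps. The paper does \emph{not} obtain the ambient isotopy by abstractly isotoping $\iota_0$ to $\iota_0\circ\tau_L$ and invoking isotopy extension. Instead it uses an explicit model: $\mathbb{S}^N=\mathcal{O}b(DT^*\mathbb{S}^{(N-1)/2},\tau_{(N-1)/2})$ with $N=2\lfloor \frac{3n}{2}\rfloor+1$, together with the fact that the standard inclusion $DT^*\mathbb{S}^n\hookrightarrow DT^*\mathbb{S}^{n+k}$ intertwines Dehn--Seidel twists, so the monodromy-realizing isotopy of the standard open book of $\mathbb{S}^N$ restricts to $\tau_n$ on a standardly placed $DT^*\mathbb{S}^n$ in the page. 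It then embeds all of $W$ in $\mathbb{S}^N\times\{0\}\subset\mathbb{D}^{N+1}$, drags only $N(L)$ radially outward into the page of the standard open book of the concentric sphere $\mathbb{S}^N\times\{1\}$ by an ambient isotopy $G_s$ (leaving the rest of $W$ at radial coordinate at most $1-2\epsilon$), applies the model twist isotopy supported in the radial shell $\mathbb{S}^N\times[1-\epsilon,1+\epsilon]$, and then conjugates back by $G_{1-s}$. The separation of $W\setminus N(L)$ from the support of the twist is thus achieved by construction in the radial coordinate, not by general position.

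The concrete gaps in your version are these. First, in your Step 3, general position cannot make the $2n$-dimensional set $g(W\setminus U)$ disjoint from the support of $\Psi_t$: that support necessarily contains a neighborhood of the $(2n+1)$-dimensional trace $\bigcup_t j_t(DT^*\mathbb{S}^n)$ of the isotopy from $\iota_0$ to $\iota_0\circ\tau_L$, and $2n+(2n+1)-(3n+2)=n-1\geq 0$, so a generic perturbation still produces intersections; you would instead have to build the isotopy inside the complement of $g(W\setminus U)$, which is precisely the difficulty the paper's radial-pushing and conjugation trick is designed to avoid. Second, Lemma \ref{relative HH isotopy} concerns proper embeddings $(V,\partial V)\to(\mathbb{D}^q,\partial\mathbb{D}^q)$, whereas your $\iota_0$ sends $\partial DT^*\mathbb{S}^n$ into the interior of the disk; you need a rel-boundary-germ version of the isotopy statement, which is not established and does not follow formally. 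Third, for odd $n$ your claim that the normal-field class is preserved because $\tau_L$ is ``smoothly isotopic to the identity after doubling'' is unsupported and doubtful: Dehn--Seidel twists (and even their squares) are in general not smoothly isotopic to the identity relative to the boundary. The paper sidesteps any direct comparison of $\iota_0$ with $\iota_0\circ\tau_L$: it only needs to isotope $h_0=f_0|_{N(L)}$ to one fixed \emph{standard} embedding $h_1$, and the twist then comes for free from the open book monodromy of the ambient sphere.
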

   
   Before going to the proof of Lemma \ref{top sein lemma}, let us recall a classical result from embedding theory.
   
   \begin{lemma} \label{embedding of n-handlebody}
   Let $V^{2m}$ be a compact manifold with nonempty boundary which has handles of index upto $m$. Then $V^{2m}$ admits a smooth embedding in $\mathbb{R}^{2m+1}.$ 	
   \end{lemma}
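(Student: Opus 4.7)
The plan is to induct on the number of handles in a handle decomposition of $V^{2m}$ using only handles of index at most $m$. The base case is a single $0$-handle $D^{2m}$, which embeds in $\mathbb{R}^{2m+1}$ as a standard codimension-$1$ disk inside a hyperplane.

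For the inductive step, suppose $V^{2m}_k$ has been embedded as a codimension-$1$ submanifold of $\mathbb{R}^{2m+1}$, and we wish to extend the embedding over an $i$-handle ($i \leq m$) attached along $\phi : S^{i-1} \times D^{2m-i} \hookrightarrow \partial V^{2m}_k$. I would first push the attaching sphere $\phi(S^{i-1} \times \{0\})$ off $V^{2m}_k$ into the complement $U := \mathbb{R}^{2m+1} \setminus V^{2m}_k$, using the trivial rank-$1$ normal bundle that exists because $V^{2m}_k$ sits in $\mathbb{R}^{2m+1}$ with codimension one. Next, I would cap the pushed-off sphere off by an embedded $i$-disk $D^i \subset U$, keeping its boundary fixed. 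To see this is possible, note that $V^{2m}_k$ collapses onto its $m$-dimensional spine (the union of the cores attached so far), so Alexander duality in $S^{2m+1}$ gives $\tilde H_j(U) = 0$ for $j \leq m-1$; combined with a general-position argument making $U$ simply connected (for $m \geq 2$), this yields $(m-1)$-connectivity of $U$ by Hurewicz. Since the pushed-off sphere has dimension $i - 1 \leq m - 1$, it bounds a continuous $i$-disk in $U$, which by general position can be perturbed rel boundary to a smooth embedding because $2i \leq 2m < 2m+1$.

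Finally, I would thicken $D^i$ to an embedded handle $D^i \times D^{2m-i}$ matching the framing prescribed by $\phi$; the obstruction to extending this framing from $\partial D^i$ to $D^i$ amounts to extending a prescribed $(2m-i)$-plane subbundle of the trivial rank-$(2m+1-i)$ normal bundle of $D^i$, and its primary obstruction lies in $\pi_{i-1}(\mathrm{Gr}(2m-i,\, 2m+1-i)) \cong \pi_{i-1}(S^{2m-i})$, which vanishes since $i - 1 < 2m - i$ whenever $i \leq m$. Gluing the thickened handle to $V^{2m}_k$ produces the embedded $V^{2m}_{k+1}$ and completes the induction.

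The main obstacle I anticipate is verifying that the complement $U$ is sufficiently connected to span the pushed-off attaching sphere by an embedded disk. Since $V^{2m}_k$ is codimension-$1$ with boundary, simple connectivity of $U$ is not automatic: one has to push immersed $2$-disks in $\mathbb{R}^{2m+1}$ across the codimension-$2$ boundary $\partial V^{2m}_k$ to remove intersections, and this cleanup argument works only for $m \geq 2$. In the residual case $m = 1$ one simply appeals to the classical fact that every compact surface with non-empty boundary embeds in $\mathbb{R}^3$.
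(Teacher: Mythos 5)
Your overall strategy (handle--by--handle induction, pushing the attaching sphere into the complement, and spanning it by an embedded core disk using the $(m-1)$-connectivity of the complement supplied by Alexander duality and general position) is fine up to and including the construction of the embedded core disk. The fatal gap is the final framing step. To attach the handle $D^i\times D^{2m-i}$ via the prescribed map $\phi$ you must extend the given \emph{trivialization} of a rank-$(2m-i)$ subbundle of the rank-$(2m+1-i)$ normal bundle of the core disk, not merely the underlying plane field: the diffeomorphism type of $V^{2m}_{k+1}$ depends on the framing, so extending the plane field and then re-trivializing it builds the wrong manifold. The relevant obstruction therefore lies in $\pi_{i-1}$ of the Stiefel manifold $V_{2m+1-i,\,2m-i}\cong O(2m+1-i)/O(1)$, i.e.\ in $\pi_{i-1}(SO(2m+1-i))$, which is in general nonzero (already $\pi_{1}(SO(3))=\mathbb{Z}/2$ for $i=2$, $m=2$). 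Even the Grassmannian computation as written is off: $\mathrm{Gr}(2m-i,\,2m+1-i)\cong\mathbb{R}P^{2m-i}$, whose fundamental group is $\mathbb{Z}/2$ rather than $\pi_1(S^{2m-i})=0$ when $i=2$.

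This obstruction is not an artifact of your method. Take $m=2$ and $V^4=\mathbb{C}P^2\setminus \mathrm{int}(D^4)$, a $0$-handle with one $2$-handle attached along the unknot with framing $+1$. If $V^4$ embedded in $\mathbb{R}^5$, its normal line bundle would be trivial (since $H^1(V;\mathbb{Z}/2)=0$), forcing $TV\oplus\varepsilon^{1}$ to be trivial and hence $w_2(TV)=0$; but $w_2(T\mathbb{C}P^2)\neq 0$ restricts isomorphically to $V$. So the step you need cannot be repaired without additional hypotheses on the normal data of $V$. For comparison, the paper's proof takes a different route: it invokes Smale--Hirsch immersion theory to immerse $V^{2m}$ in $\mathbb{R}^{2m+1}$, uses Whitney's general position to make the immersion injective on the $m$-skeleton, and then embeds a thickening of that skeleton. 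That argument conceals the same issue in its first step, since Smale--Hirsch requires a tangential monomorphism $TV\to T\mathbb{R}^{2m+1}$, which the example above also fails to admit; your framing obstruction and the missing formal solution there are two faces of the same missing hypothesis.
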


\begin{proof}
	
Since $V^{2m}$ has the homotopy type of an $m$-dimensional CW-complex, by the Smale-Hirsch immersion theory $V^{2m}$ admits an immersion $f_0$ in $\mathbb{R}^{2m+1}.$ By Whitney (Theorem $2(e)$, \cite{Wh2}, $f_0$ can be approximated by an immersion $f$	that restricts to an embedding on the $m$-skeleton $V^{(m)}$ of $V^{2m}$. By a compactness argument one can then prove that a slightly thickened neighborhood of $V^{(m)}$ in $V^{2m}$, which is again diffeomorphic to $V^{2m}$, can be embedded in $\mathbb{R}^{2m+1}$ (see the proof of Theorem $4.1$ in \cite{Hi2}).
	
\end{proof}

   \begin{proof}[Proof of Lemma \ref{top sein lemma}]
   	By Lemma \ref{embedding of n-handlebody}, $W^{2n}$ admits an embedding $f_0$ in $\mathbb{S}^{2\lfloor\frac{3n}{2}\rfloor + 1}$. Let $N = 2\lfloor\frac{3n}{2}\rfloor + 1$. 
   	
   	By the Lagrangian neighborhood Theorem, $L \subset (W^{2n},d\lambda_W)$ has a neighborhood $N(L)$ symplectomorphic to $(DT^*\mathbb{S}^n,d\lambda_n)$. This induces a smooth embedding $h_0 = f_0|_{N(L)}$ of $(DT^*\mathbb{S}^n,d\lambda_n)$ in $\mathbb{S}^N$. Recall that the standard inclusion of $\mathbb{S}^n$ in $\mathbb{S}^{n+k}$ induces a standard proper isosymplectic embedding of $(DT^*\mathbb{S}^n,d\lambda_n)$ in $(DT^*\mathbb{S}^{n+k},d\lambda_{n+k})$ for all $k \geq 0$. This gives an embedding of $DT^*\mathbb{S}^n$ in $\mathcal{O}b(DT^*\mathbb{S}^{n+k}, \tau_{n+k}) = \mathbb{S}^{2n+2k+1}$ such that an $(n+k)$-fold Dehn--Seidel twist on $DT^*\mathbb{S}^{n+k}$ induces an $n$-fold Dehn--Seidel twist on the embedded $DT^*\mathbb{S}^n$. Note that there exists a flow $\Phi_t$ on $\mathbb{S}^{2n+2k+1}$ whose time $1$ map $\Phi_1$ maps its page $DT^*\mathbb{S}^{n+k}$ to itself by applying a Dehn--Seidel twist on $DT^*\mathbb{S}^{n+k}$. Thus, we get another embedding $h_1$ of $DT^*\mathbb{S}^n$ in $\mathbb{S}^{N}.$
   	
   	Consider the $(N+1)$-disk $\mathbb{D}^{N+1}$ and let $\mathbb{S}^N \times [-1, 2)$ denote a collar neighborhood of $\partial \mathbb{D}^{N+1}$ such that $\mathbb{S}^{N} \times \{-1\}$ denotes $\partial \mathbb{D}^{N+1}$. Note that one can think of $W^{2n}$ as embedded in $\mathbb{S}^N \times \{0\}$ and then attach $\partial W^{2n} \times [-1,0] \subset \mathbb{S}^N \times [-1,0]$ to $W^{2n}$ along boundary to make $f_0$ a proper embedding of $W$ in $\mathbb{D}^{N+1}$.

   	Haefliger--Hirsch \cite{HH} generalized the result of Wu \cite{Wu} to show that any two embeddings of a $k$-connected manifold $M^n$ ($k \leq \frac{n-1}{2}$) in $\mathbb{S}^p$ are isotopic, for $p > \frac{3n}{2} + 2$. Therefore, $h_0$ and $h_1$ are isotopic in $\mathbb{S}^N$. Let $h_t$ be an isotopy between $h_0$ and $h_1$ in $\mathbb{S}^N$ for $t \in [0,1]$. We now modify the proper embedding $f_0$ in the following way. Take $\epsilon \in (0, \frac{1}{2}).$ Define $H : \partial DT^*\mathbb{S}^n \times [0,1] \rightarrow \mathbb{S}^N \times [0,1-2\epsilon]$ by $H(x,s) = (h_{s(1-2\epsilon)}|_{\partial DT^*\mathbb{S}^n}(x),s).$  We remove the interior of $f_0(N(L))$ from $f_0(W)$ and add the cylindrical region $H(\partial DT^*\mathbb{S}^n \times [0,1-2\epsilon])$ to $\overline{f_0(W)\setminus f_0(N(L))}$ along its boundary. We then further glue $h_1(\partial DT^*\mathbb{S}^n) \times [1-2\epsilon,1] \subset \mathbb{S}^N \times [1-2\epsilon,1]$ along $\partial(h_1(DT^*\mathbb{S}^n) \times \{1-2\epsilon\}).$ Finally, we identify the boundary of $h_1(DT^*\mathbb{S}^n) \times \{1\}$ with $h_1(\partial DT^*\mathbb{S}^n) \times \{1\}$. The total space, $\overline{f_0(W)\setminus f_0(N(L))} \cup H(\partial DT^*\mathbb{S}^n \times [0,1-2\epsilon]) \cup (h_1(\partial DT^*\mathbb{S}^n) \times [1-2\epsilon,1]) \cup h_1(DT^*\mathbb{S}^n) \times \{1\}$, gives a new proper embedding $f_1$ of $W^{2n}$ in $\mathbb{D}^{N+1}$.

   	\noindent Thus, we now have two proper embeddings, $f_0$ and $f_1$, of $W^{2n}$ inside $\mathbb{D}^{N+1}$. Note that $f_0 = f_1$ in the complement of $N(L)$. Thus, by applying Lemma \ref{relative HH isotopy}, we can find an isotopy between $f_0$ and $f_1$ supported away from the complement of $N(L)$. We extend this isotopy to an isotopy $G_s$ of $\mathbb{D}^{N+1}$, relative to boundary, such that $G_0 = id$ and $G_1 \circ f_0 = f_1.$

   	Let $\Psi_t$ be the isotopy of $\mathbb{S}^N$ such that $\Psi_0 = id$ and $\Psi_1$ realizes
   	the Dehn-Seidel twist on the page of its open book $\mathcal{O}b(DT^*\mathbb{S}^{\frac{N-1}{2}}, \tau_{\frac{N-1}{2}}).$  We construct an isotopy $\Gamma_s$ ($0\leq s \leq 1$) of $\mathbb{S}^N \times [-1,2]$ that satisfies the following properties.
   	
   	\begin{enumerate}
   		
   		\item $\Gamma_0 = id.$ 
   		
   		\item $\Gamma_1$ restricted to $\mathbb{S}^N \times \{1\}$ is $\Psi_1.$
   		
   	\end{enumerate}

   	\noindent $\Gamma_s$ is defined as follows.
   	
   	$$
   	\Gamma_s (x,t) 
   	=
   	\left\{
   	\begin{array}{ll}
   	\Psi_{s(2-t)}(x)  & \mbox{if } t \geq 1 \\
   	\Psi_{s(\frac{1+t}{2})}(x) & \mbox{if } t \leq 1
   	\end{array}
   	\right . 
   	$$
   	
   	\begin{figure}[htbp] 
   		
   		\def\svgwidth{16cm}
   		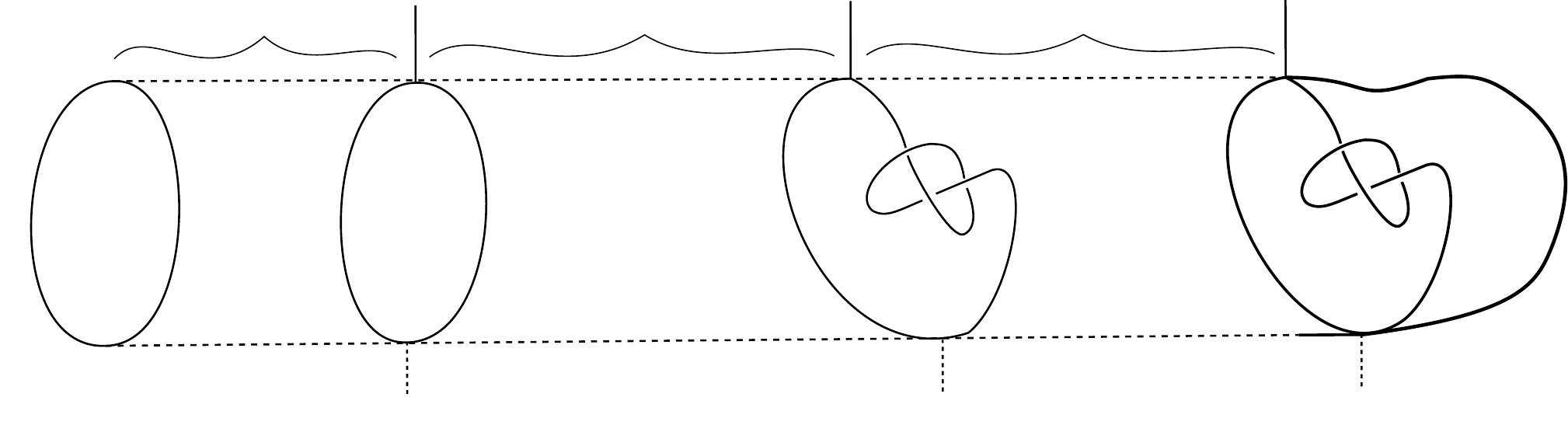
   		
   		\caption{Embedding $W^{2n}$ in $\mathbb{S}^N \times [-1,2]$ via $f_1.$}
   		\label{tsn}
   	\end{figure}

   	\noindent Let $\alpha^{-1}: [1-\epsilon, 1 + \epsilon] \rightarrow [-1,2]$ be a linear diffeomorphism such that $\alpha(1) = 1$. Using $\alpha^{-1}$, we can get an isotopy $\tilde{\Gamma}_s$ of $\mathbb{D}^{N+1}$, supported on $\mathbb{S}^N \times [1-\epsilon, 1 + \epsilon] \subset \mathbb{S}^{N} \times [-1,2] \subset \mathbb{D}^{N+1}$, so that $\tilde{\Gamma}_0 = id$ and $\tilde{\Gamma}_1|_{\mathbb{S}^N \times \{1\}} = \Psi_1.$

   	We now piece together all the steps to finish the proof. First we embed $W^{2n}$ in $\mathbb{S}^N \times \{0\}$ via $f_0$. Then we apply $G_s$ to isotope the neighborhood $N(L)$ into the page of the standard open book of $\mathbb{S}^N \times \{1\}$. We then apply $\tilde{\Gamma}_s$ to induce a Dehn-Seidel twist on $G_1(N(L))$. By construction, $\tilde{\Gamma}_s$ restricts to the identity map in the complement of $G_1(N(L)) \subset G_1(W)$. Finally, we bring $G_1(N(L))$ back to its starting embedding in $\mathbb{S}^N \times \{0\}$ by the isotopy $G_{1-s}$. The resultant diffeomorphism on $\mathbb{D}^{N+1}$ is isotopic, relative to boundary, to the identity map of $\mathbb{D}^{N+1}$, and induces a Dehn-Seidel twist on $N(L) \subset f_0(W)$. 
   	
   \end{proof}

\begin{proof}[Proof of Theorem \ref{top sein embed}]
	Say $M^{2n+1}$ bounds an achiral Lefschetz fibration $V^{2n+2} = LF(W^{2n},\phi)$ with monodromy consisting of Dehn--Seidel twists along the Lagrangians $(L_1,L_2,\dots,L_m))$. This induces an open book decomposition on $M^{2n+1}$ with page $W$ and monodromy $\phi = \tau_{L_1}\circ \tau_{L_2}\circ\dots\circ \tau_{L_m}$. Identify the $\mathbb{S}^1$-interval of the mapping torus of this open book with $[0,1]/0 \sim 1$. Divide $[0,1]$ into sub-intervals $I_j = [\frac{j}{m+1},\frac{j+1}{m+1}]$, for $j \in \{0,1,...,m\}$. Let $N = 2\lfloor\frac{3n}{2}\rfloor + 1$ as before. We apply Lemma \ref{top sein lemma} on the interval $I_j$, for the vanishing cycle $L_j$, to obtain embedding of the mapping cylinder of $\tau_{L_j} : W \rightarrow W$ in the mapping cylinder of the identity map (up to isotopy) of $\mathbb{D}^{N+1}$, for $0 \leq j \leq m-1$. We then glue these mapping cylinders together in cyclic order to obtain embedding of the mapping torus of $\phi$ in $\mathbb{D}^{N+1} \times \mathbb{S}^1$. This gives us an open book embedding of $M^{2n+1} = \mathcal{O}b(W^{2n},\phi)$ in $\mathbb{S}^{N+2} = \mathcal{O}b(\mathbb{D}^{N+1},id)$.

\end{proof}

\section{Contact embedding of manifolds bounding an achiral Lefschetz fibration}

For isocontact embeddings, Gromov (\cite{Gr}) proved the following theorem. The statement here is taken from \cite{EM}.

\begin{theorem}[Gromov] \label{h-principle for contact embedding}
	Let $(M,\xi)$ and $(V,\eta)$ be contact manifolds of dimension $2n+1$ and $2N+1$ respectively. Suppose that the differential $F_0 = Df_0$ of an embedding $f_0 : (M,\xi) \rightarrow (V,\eta)$ is homotopic via a homotopy of monomorphisms $F_t : TM \rightarrow TV$ covering $f_0$ to a contact monomorphism $F_1 : TM \rightarrow TV$. The embedding $f_0$ is called a \emph{formal contact embedding}.
	
	\begin{enumerate}
		\item 	Open case: If $n \leq N-1$ and the manifold $M$ is open, then there exists an isotopy $f_t : M \rightarrow V$ such that the embedding $f_1 : M \rightarrow W$ is contact and the differential $Df_1$ is homotopic to $F_1$ through contact monomorphisms.
		\item 	Closed case: If $n \leq N-2$, then the above isotopy $f_t$ exists even if $M$ is closed.
	\end{enumerate}
	
\end{theorem}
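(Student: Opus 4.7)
The plan is to reduce the theorem to Gromov's general h-principle for open $\mathrm{Diff}$-invariant relations, in the concrete form of the holonomic approximation theorem of Eliashberg--Mishachev. Define the first-order differential relation $\mathcal{R} \subset J^1(M,V)$ whose jets $(x,y,A)$ are characterized by the conditions that $A \colon T_xM \to T_yV$ is injective, transverse to $\eta_y$, sends $\xi_x$ into $\eta_y$, and identifies the conformal symplectic structure on $\xi_x$ with that induced on $A(T_xM)\cap\eta_y$. Each of these conditions is open under small perturbations of $A$, so $\mathcal{R}$ is an open relation, and it is invariant under the pseudogroup of local contactomorphisms of $(V,\eta)$. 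A formal contact embedding $(f_0, F_t)$ in the sense of the statement is precisely a continuous section of $\mathcal{R} \to M$ covering $f_0$, while an honest isocontact embedding corresponds to a holonomic section.

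For the open case $(1)$, I would choose a polyhedron $K \subset M$ of positive codimension such that $M$ deformation retracts onto $K$, which is possible exactly because $M$ is open. The holonomic approximation theorem then produces, after an arbitrarily $C^0$-small ambient isotopy $h_s$ of $M$, a holonomic section of $\mathcal{R}$ defined over an open neighborhood of $h_1(K)$ and $C^0$-close to the given formal section. The codimension hypothesis $n \leq N-1$, together with the fact that $f_0$ is already an embedding, ensures that this holonomic section is still the $1$-jet of an embedding. Extending from a neighborhood of $h_1(K)$ to all of $M$ via the deformation retraction, and applying the same argument in a one-parameter family, produces the desired isocontact embedding $f_1$ together with a homotopy of $Df_1$ to $F_1$ through contact monomorphisms.

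For the closed case $(2)$, I would bootstrap the open case: remove a small open disk $D$ around a point $p \in M$, apply $(1)$ to the open manifold $M \setminus \overline{D}$, and extend the resulting isocontact embedding across $D$. The main obstacle is precisely this extension step. This is where the extra codimension $n \leq N-2$ is essential: inside a Darboux neighborhood of $f_0(p)$, one first uses a general-position argument (requiring codimension at least two) to produce a continuous extension that is still a topological embedding, and then a parametric application of the open h-principle inside that Darboux ball, relative to its boundary, upgrades the extension to an isocontact embedding $C^0$-close to $f_0$ there. The relative/parametric formulation delivers simultaneously the prescribed homotopy of derivatives through contact monomorphisms, completing the proof.
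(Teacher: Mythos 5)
The paper does not prove this theorem; it is imported verbatim from Gromov via Eliashberg--Mishachev \cite{EM}, so your proposal can only be measured against the standard proof there. Measured that way, it has a genuine gap at its very first step. The differential relation $\mathcal{R}$ you define is \emph{not} open: while injectivity of $A$ and transversality of $A(T_xM)$ to $\eta_y$ are open conditions, the containment $A(\xi_x)\subset\eta_y$ and the requirement that $A|_{\xi_x}$ be conformally symplectic are \emph{equations} on the $1$-jet, hence closed conditions of positive codimension --- a generic small perturbation of $A$ destroys them. Consequently the holonomic approximation theorem for open, $\Diff$-invariant relations does not apply to $\mathcal{R}$, and the entire reduction collapses. (A secondary issue: even for open relations, holonomic approximation controls jets near a polyhedron and does not by itself produce global embeddings; injectivity is not a condition on $1$-jets, which is why the embedding versions of these h-principles are routed through the directed-embedding theorems with their own codimension hypotheses, rather than through bare holonomic approximation.)

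The actual argument splits the closed condition into an open part and a correction. One first considers the \emph{open} subset $A\subset\operatorname{Gr}_{2n+1}(TV)$ of $(2n+1)$-planes $P$ that are transverse to $\eta$ with $P\cap\eta$ a symplectic subspace of $(\eta,d\beta)$; the h-principle for $A$-directed embeddings (which is where holonomic approximation, the codimension hypotheses $n\leq N-1$ resp.\ $n\leq N-2$, and the open/closed dichotomy genuinely enter) deforms $f_0$ to an embedding $f_1$ whose image is a contact submanifold, so that $f_1^{-1}(\eta)$ is \emph{some} contact structure $\xi'$ on $M$ in the prescribed formal class. A separate second step --- a Gray/Moser-type stability argument combined with the h-principle for contact structures, not a general-position argument --- is then needed to arrange $\xi'=\xi$. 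Your treatment of the closed case also understates the difficulty: extending an isocontact embedding across the removed disk is essentially the original problem again, and the extra codimension $n\leq N-2$ is consumed in the closed directed-embedding theorem (turning a regular homotopy into an isotopy of embeddings), not in a transversality count inside a Darboux ball. If you want to salvage the write-up, replace $\mathcal{R}$ by the directed-embedding relation and add the contact correction step explicitly.
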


Note that Theorem \ref{h-principle for contact embedding} implies that a smooth embedding of a contact manifold, which is regularly homotopic to a contact immersion, is isotopic to an isocontact embedding.

The following result was proved in \cite{S2}.

\begin{lemma}\label{lemma S2}
	If an almost contact manifold $M^{2n+1}$ embeds in ${\mathbb{R}}^{2N+1}$ with a trivial normal bundle, then there exists a contact structure $\xi_0$ such that $(M,\xi_0)$ isocontact embeds into $({\mathbb{R}}^{2N+1},\eta)$ ($N-n\geq1$) for some contact structure $\eta$ on $\mathbb{R}^{2N+1}$. 
\end{lemma}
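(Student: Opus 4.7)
The plan is to build $\eta$ in two stages: first an explicit contact form on a tubular neighborhood of $M$ in which $M$ tautologically sits as a contact submanifold, and then extend it across the rest of $\mathbb{R}^{2N+1}$ using Gromov's relative h-principle.

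First, I would obtain an honest contact structure $\xi_0 = \ker \alpha_0$ on $M$ compatible with the given almost contact data. For open $M$ this is immediate from Theorem \ref{existence of contact structure gromov}; for closed $M$ one invokes the Borman--Eliashberg--Murphy existence theorem. (In the intended applications in this paper, $M$ already carries a contact structure coming from an open book, so this step is free.)

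Next, using that the normal bundle of $f(M)$ is trivial, I would identify a tubular neighborhood $U$ of $f(M)$ with $M \times \mathbb{R}^{2k}$, where $k = N-n \geq 1$, and let $(x_1,y_1,\ldots,x_k,y_k)$ denote coordinates on $\mathbb{R}^{2k}$. Define on $U$ the 1-form
\[
\beta \;=\; \alpha_0 \;+\; \sum_{i=1}^{k} x_i\, dy_i.
\]
Expanding $(d\beta)^N = (d\alpha_0 + \sum dx_i \wedge dy_i)^N$, the term $(d\alpha_0)^j \wedge (\sum dx_i \wedge dy_i)^{N-j}$ vanishes unless $j = n$, because $(d\alpha_0)^{n+1} = 0$ on the $(2n+1)$-manifold $M$ and $(\sum dx_i \wedge dy_i)^{k+1} = 0$ on $\mathbb{R}^{2k}$. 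The contribution of $\sum_i x_i\, dy_i$ to $\beta \wedge (d\beta)^N$ vanishes because $(\sum_j dx_j \wedge dy_j)^{k}$ is already a top form on $\mathbb{R}^{2k}$, so wedging it with $dy_i$ gives zero. Consequently
\[
\beta \wedge (d\beta)^N \;=\; \binom{N}{n}\, \alpha_0 \wedge (d\alpha_0)^n \wedge \Bigl(\sum_{i=1}^{k} dx_i \wedge dy_i\Bigr)^{k},
\]
which is a volume form on $U$. Hence $\ker\beta$ is a contact structure on $U$, and the pullback of $\beta$ to $M \times \{0\}$ equals $\alpha_0$, so $M$ sits in $(U, \ker\beta)$ as a contact submanifold with induced contact structure $\xi_0$.

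Finally, to extend $\ker\beta$ to a contact structure on all of $\mathbb{R}^{2N+1}$, I would apply Theorem \ref{existence of contact structure gromov} to the open manifold $V = \mathbb{R}^{2N+1}$ with subcomplex $K = f(M)$: since $\mathbb{R}^{2N+1}$ is contractible, the almost contact structure underlying $\ker\beta$ on $Op(K) = U$ extends to some almost contact structure $\bar{\xi}$ on all of $\mathbb{R}^{2N+1}$, and Gromov's theorem then homotopes $\bar{\xi}$ rel $Op(K)$ to a genuine contact structure $\eta$ on $\mathbb{R}^{2N+1}$. By construction $(M, \xi_0) \hookrightarrow (\mathbb{R}^{2N+1}, \eta)$ is an isocontact embedding. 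The main obstacle is the first step — producing a genuine contact structure on $M$ realizing the given almost contact data — which for closed $M$ depends on a deep existence theorem; the explicit contact model $\beta$ and the extension via the relative h-principle are otherwise standard.
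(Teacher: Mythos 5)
Your overall architecture --- an explicit contact form $\alpha_0 + \sum_i x_i\,dy_i$ on the trivialized tubular neighborhood, followed by Gromov's relative $h$-principle (Theorem \ref{existence of contact structure gromov}) to upgrade an ambient almost contact extension to a contact structure $\eta$ --- is exactly the route the paper takes, and your verification that $\beta$ is contact on $U$ is correct. The genuine gap is in the extension step: you assert that because $\mathbb{R}^{2N+1}$ is contractible, the almost contact structure underlying $\ker\beta$ on $U$ automatically extends to all of $\mathbb{R}^{2N+1}$. That is false. An almost contact structure on $\mathbb{R}^{2N+1}$ is a section of the (trivial) $\Gamma_{N+1}$-bundle associated to $T(\mathbb{R}^{2N+1}\times\mathbb{R})$, i.e.\ a map to $\Gamma_{N+1}=SO(2N+2)/U(N+1)$. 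Contractibility of the ambient space does not let you extend a section given only over $U\simeq M$; on the contrary, it forces the classifying map $M\to\Gamma_{N+1}$ to be null-homotopic, and the obstructions live in $H^{i+1}(\mathbb{R}^{2N+1},M;\pi_i(\Gamma_{N+1}))\cong H^i(M;\pi_i(\Gamma_{N+1}))$, which are nonzero in general (already $\pi_2(\Gamma_{N+1})\cong\pi_3(SO)\cong\mathbb{Z}$ gives a potential obstruction in $H^2(M;\mathbb{Z})$ tied to $c_1(\xi_0)$). This obstruction computation is precisely the content of the paper's argument: it identifies these groups explicitly, and the hypotheses of Theorem \ref{codim 2 embedding}(2) ($(n-1)$-connectedness, $n\equiv 4\pmod 8$) exist solely to kill them. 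For the lemma as stated, one must exploit the freedom in the conclusion (``there exists a contact structure $\xi_0$''): the trivial normal bundle makes $TM$ stably trivial, and $\xi_0$ has to be chosen so that the induced stable complex structure over $U$ is the restriction of a constant one. Starting from an arbitrary $\xi_0$ supplied by Borman--Eliashberg--Murphy and declaring the extension free does not work.

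A secondary point: you locate the ``main obstacle'' in producing a genuine contact structure on $M$, but that is not where the difficulty lies (in the paper's applications $\xi_0$ comes from a supporting open book in any case); the difficulty is the vanishing of the extension obstructions above, which your proposal treats as automatic.
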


The idea of the proof of Lemma \ref{lemma S2} is to put a contact structure on $M^{2n+1} \times \mathbb{D}^{2(N-n)} \subset \mathbb{R}^{2N+1}$ and then look at the obstructions to extend the contact structure as an almost contact structure to all of $\mathbb{R}^{2N+1}.$ The obstructions to such extension lies in the groups $H^{i+1}(\mathbb{R}^{2N+1},M;\pi_i(\Gamma_{N+1})) \cong H^i(M;\pi_i(\Gamma_{N+1}))$, for $1 \leq i \leq 2n+1$, where $\Gamma_{N+1}$ denotes the set of all complex structures on $\mathbb{R}^{2N+2}$. If all the obstructions vanish, then Theorem \ref{existence of contact structure gromov} gives the desired contact structure $\eta$ on $\mathbb{R}^{2N+1}$. Since $({\mathbb{R}}^{2N+1},\eta)$ isocontact embeds in $({\mathbb{R}}^{2N+3},\xi_{std}).$, the result follows.

Recently, a codimension--$2$ $h$-principle for isocontact embedding was proved for closed contact manifolds by Casals--Pancholi--Presas \cite{CPP}. In particular, they proved statement $(2)$ of Theorem \ref{h-principle for contact embedding} for the case $n \leq N - 1.$ This allows to improve the codimension of embedding in Lemma \ref{lemma S2} by $2$.

\begin{proof}[\textbf{Proof of Theorem \ref{codim 2 embedding}}]
	
	\begin{enumerate}
	
		\item Let $W^{2n+2}$ be an achiral Lefschetz fibration which has $M^{2n+1}$ as boundary. By Lemma \ref{embedding of n-handlebody}, $W^{2n+2}$ embeds in $\mathbb{R}^{2n+3}.$ In particular $M^{2n+1}$ embeds in $\mathbb{R}^{2n+3}.$ Recall that the normal bundle of every embedding of an orientable manifold in a Euclidean space has Euler class zero. Therefore, $M^{2n+1}$ embeds in $\mathbb{R}^{2n+3}$ with trivial normal bundle. By the proof of Lemma \ref{lemma S2}, there exists a contact structure $\xi_0$ on $M$ such that $(M,\xi_0)$ isocontact embeds in $(\mathbb{R}^{2n+3},\eta)$ for some contact structure $\eta$ on $\mathbb{R}^{2n+3}.$ Since any two almost contact structures on $\mathbb{R}^{2n+3} $are homotopic, $(M,\xi_0)$ admits a formal contact embedding in $(\mathbb{R}^{2n+3},\xi_{std}).$ By the codimension--$2$ $h$-principle of Casals--Pancholi--Presas \cite{CPP} for isocontact embeddings of closed contact manifolds, $(M,\xi)$ then isocontact embeds in $(\mathbb{R}^{2n+3},\xi_{std}).$
		
		\item Let $\xi$ be any contact structure on $M^{2n+1}$. Let $\alpha$ be a contact form representing $\xi$. Let $(r,\theta)$ be a cylindrical co-ordinate system on $\mathbb{D}^2$. The $1$-form $\tilde{\alpha} = \alpha + r^2 d\theta$ then defines a contact structure on $M \times \mathbb{D}^2 \subset \mathbb{R}^{2n+3}$. The obstructions to extend this as an almost contact structure to all of $\mathbb{R}^{2n+3}$ lies in $H^i(M;\pi_i(\Gamma_{n+2}))$, for $1 \leq i \leq 2n+1.$ Since $M^{2n+1}$ is $(n-1)$-connected, the only obstructions lie in $H^n(M;\pi_n(\Gamma_{n+2}))$, $H^{n+1}(M;\pi_{n+1}(\Gamma_{n+2})$ and $H^{2n+1}(M;\pi_{2n+1}(\Gamma_{n+2}))$. Since, $\pi_i(\Gamma_{n+2}) = \pi_{i+1}(SO)$ for $i \leq 2n+2$ and $\pi_{n+1}(SO) = \pi_{n+2}(SO) = \pi_{2n+2}(SO) = 0$ for $n \equiv 4 \pmod 8$, all the obstructions vanish. Thus, by Theorem \ref{existence of contact structure gromov}, $(M,\xi)$ admits an isocontact embedding in $(\mathbb{R}^{2n+3},\eta)$ for some contact structure $\eta$ on $\mathbb{R}^{2n+3}.$ Then by similar arguments as in $(1)$ above, one can show that $(M^{2n+1},\xi)$ isocontact embeds in $(\mathbb{R}^{2n+3},\xi_{std}).$ 
		
	\end{enumerate}

\end{proof}

\section{Contact open book embedding in $(S^{4n+1},\xi_{std})$}\label{contact ob sect}

\subsection{$h$-principle for isosymplectic immersions and isosymplectic embeddings} We recall the notions of \emph{isosymplectic immersions}, \emph{isosymplectic embeddings} and \emph{formal isosymplectic immersion} or \emph{isosymplectic homomorphism}.

\begin{definition}[Isosymplectic immersion/embedding]
	An isosymplectic immersion (or embedding) $h : (M^{2n}, \omega_M) \rightarrow (W^{2N},\omega_W)$ is an immersion (or embedding) such that $h^*(\omega_W) = \omega_M$.  
\end{definition}

\begin{definition}[Isosymplectic homomorphism]
	An isosymplectic homomorphism is a map $F : (TM^{2n},\omega_M) \rightarrow (TV^{2N},\omega_W)$ such that $F$ restricts to a vector space monomorphism on $(T_pM,\omega_M)$ for all $p \in M$ and $F^*(\omega_W) = \omega_M$.
\end{definition}

\noindent The existence of an isosymplectic homomorphism is a necessary condition for existence of an isosymplectic immersion. Gromov \cite{Gr} proved the following $h$-principle to show that it is also sufficient.

\begin{theorem}[$h$-principle for isosymplectic immersion \cite{Gr}] \label{gromov h-principle immersion}
	
	Let $(W, \omega_W)$ and $(V, \omega_V)$ be symplectic manifolds of dimensions $2N$ and $2n$ respectively. Suppose a
	continuous map $f_0 : V \rightarrow W$ satisfies the cohomological condition $f_0^*[\omega_W] = [\omega_V]$, and that $f_0$ is covered by an isosymplectic homomorphism $F : TV \rightarrow TW$. If $V$ is closed and $2N \geq 2n + 2$, then there exists a homotopy $f_t : V \rightarrow W$ such that $f_1$ is an isosymplectic immersion and the differential
	$Df_1$ is homotopic to $F$ through isosymplectic homomorphisms.
	
\end{theorem}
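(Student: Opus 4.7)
The plan is to reduce the problem to an application of Gromov's convex integration technique, preceded by a smoothing step that exploits Smale--Hirsch immersion theory. First I would use the hypothesis that $f_0$ is covered by an isosymplectic homomorphism $F : TV \to TW$, in particular a bundle monomorphism, together with the codimension condition $2N \geq 2n+2$, to invoke the standard $h$-principle for immersions: after a homotopy of $f_0$ we may assume $f_0$ is a smooth immersion whose differential is homotopic to $F$ through bundle monomorphisms. At this point $f_0^*\omega_W$ is a (possibly degenerate) closed $2$-form on $V$ cohomologous to $\omega_V$, so there is a $1$-form $\beta$ on $V$ with $f_0^*\omega_W - \omega_V = d\beta$.

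Next I would set up a one-parameter family of closed $2$-forms $\omega_s = \omega_V + s\cdot d\beta$ interpolating between $\omega_V$ and $f_0^*\omega_W$ and try to cancel the exact correction $d\beta$ by perturbing $f_0$ inside a tubular neighbourhood in $W$. Concretely, the goal is to find an isotopy $f_t$ of immersions covered by a homotopy of isosymplectic homomorphisms $F_t$ with $F_0 = Df_0$, such that $(f_t)^*\omega_W = \omega_V + (1-t)d\beta$; at $t=1$ this produces the desired isosymplectic immersion. This is exactly the kind of first-order differential relation to which convex integration applies.

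The main step — and the main obstacle — is verifying that the differential relation $\mathcal{R} \subset J^1(V,W)$ cutting out isosymplectic immersions is ample in the coordinate directions. After fixing Darboux charts on $(V,\omega_V)$ and local symplectic coordinates on $(W,\omega_W)$, the isosymplectic condition becomes a system of quadratic equations $\omega_W(\partial_i f,\partial_j f) = \omega_V(\partial_i,\partial_j)$ in the partial derivatives. One fixes all but one partial derivative, say $\partial_k f$, and checks that the set of admissible values for $\partial_k f$ (subject to the remaining equations and to the monomorphism condition) is a connected subset of the affine space of $1$-jets whose convex hull is the whole ambient space; this is where the codimension bound $2N\geq 2n+2$ is essential, as it supplies the two extra transverse symplectic directions needed to make the admissible set non-empty and path-connected. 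Granted ampleness, Gromov's iterated convex integration produces a $C^0$-close perturbation of $f_0$ realising any prescribed first-order data in $\mathcal{R}$, yielding the isosymplectic immersion $f_1$.

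Finally, one must keep track of the formal lift throughout the construction, ensuring at each inductive step of convex integration over a fine cell decomposition of $V$ that the newly obtained differential stays homotopic to $F$ through isosymplectic homomorphisms. Closedness of $V$ is used to make the induction finite and to control the uniform estimates. The principal technical difficulty is thus the ampleness verification and the bookkeeping that the successive small perturbations assemble into a genuine $C^1$ immersion rather than merely a holonomic approximation in a weaker sense; once ampleness is established, the rest of the argument is the standard convex integration machinery.
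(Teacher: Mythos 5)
This statement is not proved in the paper at all: it is quoted verbatim as a theorem of Gromov, with the proof deferred to \cite{Gr} (see also Theorem 16.4.3 of \cite{EM}). So there is no in-paper argument to compare against, and your proposal has to be judged as a reconstruction of Gromov's proof.

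As such it contains a genuine gap at the step you yourself identify as the main one. The differential relation cutting out isosymplectic immersions is \emph{not} ample, so convex integration cannot be applied in the way you describe. Fix a coordinate direction $k$ and freeze all partial derivatives except $\partial_k f$. Because $\omega_W$ is antisymmetric, the equation $\omega_W(\partial_k f,\partial_k f)=\omega_V(\partial_k,\partial_k)=0$ is automatic, and the remaining constraints $\omega_W(\partial_k f,\partial_j f)=\omega_V(\partial_k,\partial_j)$ for $j\neq k$ are \emph{affine-linear} in $\partial_k f$. Hence the admissible set for $\partial_k f$ is (an open dense subset of) a proper affine subspace of $T_{f(x)}W$ of codimension $2n-1$, and its convex hull is contained in that affine subspace rather than filling the whole principal subspace. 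Ampleness fails identically, so the phrase ``granted ampleness'' cannot be granted, and the iterated convex integration you invoke never gets off the ground. The extra codimension $2N\geq 2n+2$ is not there to make an ample relation nonempty; it is used quite differently. The correct architecture (which your first two paragraphs actually begin correctly) is: use Smale--Hirsch to make $f_0$ an immersion with the prescribed formal isosymplectic data, write $f_0^*\omega_W-\omega_V=d\beta$, and then kill the exact discrepancy by an explicit $C^0$-small ``twisting'' perturbation of the immersion into the at least two extra symplectic normal directions (writing $\beta$ as a finite sum $\sum_i \phi_i\,d\psi_i$ via a partition of unity and pairing each summand into an auxiliary symplectic $2$-plane), combined with holonomic approximation and microflexibility arguments to handle the closed manifold case. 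That correction mechanism, not convex integration, is the engine of Gromov's proof, and it is entirely missing from your proposal.
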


\noindent The $h$-principle for isosymplectic immersion is also true for the relative and the parametric version (see Theorem $16.4.3$ in \cite{EM}).

\

We now recall Gromov's $h$-principle for isosymplectic embedding.

\begin{theorem}[$h$-principle for isosymplectic embedding \cite{Gr}] \label{gromov h-principle embedding}
	
	Let $(V, \omega_V)$ and
	$(W, \omega_W)$ be symplectic manifolds of dimension $n = 2m$ and $q = 2l$ respectively. Suppose that an embedding $f_0 : V \rightarrow W$ satisfies the cohomological
	condition $f_0^*[\omega_W] = [\omega_V]$, and the differential $F_0 = df_0$ is homotopic via a
	homotopy of monomorphisms
	$F_t : TV \rightarrow TW$, covering $f_0$,
	to an isosymplectic homomorphism $F_1 : TV \rightarrow TW$.
	
	\begin{enumerate}
		\item (Open case) If $n \leq q-2$ and the manifold $V$ is open then there
		exists an isotopy $f_t : V \rightarrow W$ such that the embedding $f_1 : V \rightarrow W$
		is isosymplectic and the differential $df_1$ is homotopic to $F_1$ through
		isosymplectic homomorphisms.
		
		\item (Closed case) If $n\leq q-4$ then the above isotopy $f_t$ exists even if $V$
		is closed. Moreover, one can choose the isotopy $f_t$ to be arbitrarily
		$C^0$-close to $f_0$.
	\end{enumerate}
	
\end{theorem}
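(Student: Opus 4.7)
The plan is to reduce this embedding h-principle to the immersion version (Theorem \ref{gromov h-principle immersion}) and then convert the resulting isosymplectic immersion into an embedding by a general-position argument exploiting the codimension hypotheses. First I would apply the parametric, relative form of the h-principle for isosymplectic immersions to the formal data $(f_0, F_t)$, producing an isosymplectic immersion $g : V \to W$ which is regularly homotopic to $f_0$ through an immersion path covered by a homotopy of monomorphisms from $F_0$ to $F_1$ through isosymplectic homomorphisms. The cohomological hypothesis $f_0^*[\omega_W] = [\omega_V]$ is exactly what is needed to run the immersion h-principle, and the formal monomorphism data $F_1$ supplies the required jet-level solution.

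The remaining task is to perturb $g$, within the class of isosymplectic maps, into an embedding. In the open case $V$ is open and $q - n \geq 2$, so in the smooth category a generic $C^0$-small perturbation of $g$ is an embedding; the issue is keeping the perturbation isosymplectic. I would work locally near each double point: by Darboux's theorem a neighborhood of such a point in $W$ is symplectomorphic to a standard ball, the two sheets of $g(V)$ become transversely intersecting symplectic subspaces, and one can push them apart in the symplectic-orthogonal directions within the local model. A Moser-type argument glues the local separations into a global symplectic isotopy, and the parametric/relative immersion h-principle lets one run this family keeping the isosymplectic condition throughout.

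In the closed case, the bound $q - n \geq 4$ is precisely the codimension threshold at which the Whitney trick applies on closed manifolds: pairs of transverse double points of opposite intersection sign can be cancelled along embedded Whitney disks which are generically disjoint from the rest of $g(V)$. The main obstacle, and the step I would take the most care with, is executing the Whitney cancellations while staying in the isosymplectic category, since the Whitney moves are defined at the smooth level and need to be promoted to moves through isosymplectic maps. The cancellation is naturally encoded as a homotopy of monomorphisms along the Whitney disk, so I would invoke Gromov's convex integration scheme applied to the ample, microflexible differential relation defining isosymplectic maps, which turns the formal homotopy into an actual isosymplectic isotopy. The same convex integration mechanism delivers the arbitrary $C^0$-closeness claim: iterating the one-step construction on sufficiently fine covers produces holonomic solutions inside any prescribed $C^0$-neighborhood of the formal solution.
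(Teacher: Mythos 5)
This theorem is not proved in the paper at all: it is quoted verbatim (as Theorem \ref{gromov h-principle embedding}) from Gromov's work, with \cite{Gr} and \cite{EM} as the references, so there is no in-paper argument to compare yours against. Judged on its own merits, your outline does not follow the known proof and contains a step that fails for dimensional reasons. The route in \cite{EM} (Theorem 12.1.1) never passes through an isosymplectic \emph{immersion} with double points: it starts from the given embedding $f_0$, uses holonomic approximation (open case) together with the formal data to deform $f_0$ through \emph{embeddings} to one whose pullback of $\omega_W$ is exact-close to $\omega_V$, and then corrects the form by a Moser-type isotopy realized as a perturbation in the symplectic normal bundle; the codimension hypotheses ($q-n\geq 2$ open, $q-n\geq 4$ closed) are what make that normal correction and the closed-case extension over a ball possible, not what make general position work.

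The concrete gap in your plan is the double-point removal. In the codimensions of the statement a generic immersion $V^n\looparrowright W^q$ has self-intersection locus of dimension $2n-q$, which equals $n-2$ (open case) or $n-4$ (closed case) and is positive-dimensional as soon as $n\geq 5$. So a generic $C^0$-small perturbation is \emph{not} an embedding, there are no isolated double points of opposite sign to pair up, and the Whitney trick simply does not apply; that mechanism is only relevant when $q=2n$ (as in the paper's Lemma \ref{intersection number}, where $V^{2n}$ sits in $\mathbb{D}^{4n}$). The appeal to convex integration to ``promote'' the cancellation is also misplaced: the isosymplectic condition $f^*\omega_W=\omega_V$ is a closed, non-open differential relation, so the standard ample/open convex integration machinery does not apply to it directly (Gromov's treatment of isosymplectic \emph{immersions} requires a separate device, and in any case convex integration produces immersions, not embeddings). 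To repair the argument you would need to abandon the immersion-plus-surgery strategy and instead keep the map an embedding throughout, proving the key lemma that an embedding $f$ with $f^*\omega_W-\omega_V=d\alpha$ small and exact can be isotoped, through embeddings and $C^0$-small, to an isosymplectic one.
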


The $h$-principle for isosymplectic embedding also holds true for the relative and the parametric case. In particular, if a family of proper isosymplectic immersions $f_t : (V, \partial V, d\lambda_V) \rightarrow (W, \partial W, d\lambda_W)$ is regularly homotopic to a family of embeddings $e_t$, depending smoothly on $t$, then by the parametric relative version of Theorem \ref{gromov h-principle embedding}, $f_t$ can be isotoped to a family of proper isosymplectic embeddings $f_t^s$.


It is well known from the obstruction theory of fiber bundles that $(TV,\omega_V)$ has an isosymplectic homomorphism in $(TD^{2N},\omega_0)$ if and only if all the obstruction classes in $H^{i}(V, \partial V;\pi_{i-1}{St}^\mathbb{C}(N,n))$ vanish for $1 \leq i \leq 2n$. Here, ${St}^\mathbb{C}(N,n)$ denotes the complex Stiefel manifold. Moreover, Two isosymplectic homomorphisms are homotopic via isosymplectic homomorphisms if all the homotopy obstructions in $H^{i}(V, \partial V;\pi_i{St}^\mathbb{C}(N,n))$, for $1 \leq i \leq 2n$.

\

The following lemma is essentially Proposition $4$ in \cite{Au}, adapted to our setting. For a proof, we refer to \cite{S}.

\begin{lemma}\label{symplectic isotopy lemma}
	Let $(V,\partial V,d\lambda_V) $ and $(W,\partial W,d\lambda_W)$ be two exact symplectic manifolds with convex boundaries of dimension $2m$ and $2m+2s$ respectively. Let $\psi_t : (V,\partial V) \rightarrow (W,\partial W)$ be a family of proper isosymplectic embeddings. There exists a symplectic isotopy $\Psi_t$ of $(W,\partial W,d\lambda_W)$ such that $\Psi_0 = Id$ and $\Psi_1 \circ \psi_0(V) = \psi_1(V)$.
\end{lemma}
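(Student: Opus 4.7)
The plan is to build a time-dependent Hamiltonian $H_t$ on $W$ whose Hamiltonian flow realizes the ambient isotopy, after correcting the naive generating vector field of $\psi_t$ by a tangential vector field on $V$ in order to kill a cohomological obstruction living in $H^1(V)$.

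First I would differentiate to form the vector field $v_t$ along $\psi_t(V)$ defined by $v_t(\psi_t(p)) = \tfrac{d}{dt}\psi_t(p)$, which is tangent to $\partial W$ along $\psi_t(\partial V)$ since $\psi_t$ is a proper isotopy sending $\partial V$ into $\partial W$. Differentiating the isosymplectic identity $\psi_t^*\omega_W = \omega_V$ in $t$ and using Cartan's formula, one finds that
\[
\eta_t := \psi_t^*(\iota_{v_t}\omega_W)
\]
is a closed 1-form on $V$. Since $\eta_t$ need not be exact, a naive attempt to produce a global Hamiltonian $H_t$ with $X_{H_t}|_{\psi_t(V)} = v_t$ fails. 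To resolve this, I would introduce the tangential correction $\xi_t := d\psi_t(X_{\eta_t})$, where $X_{\eta_t}$ is the vector field on $V$ defined by $\iota_{X_{\eta_t}}\omega_V = \eta_t$, and replace $v_t$ by
\[
\tilde v_t := v_t - \xi_t.
\]
A short computation using $\psi_t^*\omega_W = \omega_V$ gives $\psi_t^*(\iota_{\tilde v_t}\omega_W) = 0$, while $\tilde v_t$ differs from $v_t$ only by a vector tangent to $\psi_t(V)$, so both fields produce the same geometric displacement of the submanifold.

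Next I would use the vanishing $\psi_t^*(\iota_{\tilde v_t}\omega_W) = 0$ to construct $H_t : W \to \mathbb{R}$ vanishing on $\psi_t(V)$ whose 1-jet along $\psi_t(V)$ equals $\iota_{\tilde v_t}\omega_W$ as a section of $T^*W|_{\psi_t(V)}$. The vanishing tangential component makes this prescription consistent, so $H_t$ exists in a tubular neighborhood of $\psi_t(V)$ and can be cut off to a globally defined smooth function. Near $\partial W$ the extension must be arranged so that $X_{H_t}$ is tangent to $\partial W$; here the Liouville vector field dual to $\lambda_W$ and the fact that $v_t$ (hence $\tilde v_t$) is already tangent to $\partial W$ along $\psi_t(\partial V)$ force the normal-to-$\partial W$ component of the prescribed 1-jet to vanish along that stratum, allowing a compatible extension via the contact form $\lambda_W|_{\partial W}$ on the convex boundary.

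Finally, let $\Psi_t$ be the flow of $X_{H_t}$. Then $\Psi_t$ is a symplectic isotopy of $(W,d\lambda_W)$ with $\Psi_0 = \Id$ that preserves $\partial W$, and by construction $X_{H_t}(\psi_t(p)) = \tilde v_t(\psi_t(p))$ differs from $v_t(\psi_t(p))$ only by the tangential term $\xi_t$; hence $\Psi_t(\psi_0(V)) = \psi_t(V)$ and in particular $\Psi_1\circ\psi_0(V) = \psi_1(V)$. I expect the main obstacle to be the extension step: the tangential correction $\tilde v_t$ kills the cohomological obstruction in $H^1(V)$ and makes the 1-jet along $\psi_t(V)$ integrable, but extending $H_t$ globally so that the resulting Hamiltonian flow is complete and preserves $\partial W$ requires careful use of the convex boundary hypothesis and the Liouville structure, with the smooth dependence on $t$ of every object tracked in parallel.
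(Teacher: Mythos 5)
The paper does not actually write out a proof of this lemma --- it cites Proposition 4 of Auroux \cite{Au} and the reference \cite{S} --- and your argument is precisely the standard proof of that cited result: realize the motion of the submanifolds by a Hamiltonian vector field prescribed along $\psi_t(V)$ and integrate the flow. Two small remarks: the ``cohomological obstruction in $H^1(V)$'' framing is a red herring, since your corrected field $\tilde v_t$ is nothing but the $\omega_W$-orthogonal projection of $v_t$ onto the symplectic normal bundle of $\psi_t(V)$ (exactness of $\eta_t$ is never at issue --- only its nonvanishing), and the tangency of $X_{H_t}$ to $\partial W$ is the one genuinely delicate point, which in the paper's application is finessed because the embeddings there agree near the boundary, so the cutoff of $H_t$ can be taken to be supported away from $\partial W$.
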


\begin{proof}[\textbf{Proof of Theorem \ref{contact open book thm}}]
	
	Let $f^s$ and $f^s \circ \phi$ be two proper isosymplectic embeddings of $(V^{2n},\partial V^{2n},d\lambda_V)$ in $(D^{4n},\partial D^{4n},d\lambda_0)$. Since $V^{2n}$ is simply connected, by Lemma \ref{relative HH isotopy}, there exists a family of embeddings $e_t$ for $t \in [0,1]$ such that $e_0 = f^s$ and $e_1 = f^s \circ \phi$. Since $St^{\mathbb{C}}(2n,n)$ is $2n$-connected, $De_0$ and $De_1$ are homotopic via a continuous family of isosymplectic homomorphisms $F_t$ for $t \in [0,1]$. Applying the parametric relative version of the $h$-principle for isosymplectic immersion (Theorem \ref{gromov h-principle immersion}), we get a family of isosymplectic immersions $f^s_t$ joining $e_0$ and $e_1$, such that there is continuous family of formal homotopy $H_t^r$ ($r \in [0,1]$) connecting $H_t^0 = Df^s_t$ and $H_t^1 = F_t$. Since each $f^s_t$ is regularly homotopic to the embedding $e_0$ and hence to $e_t$, there exists continuous family $G_t^r$ ($r \in [0,1]$) of formal homotopy between $ G_t^0 = De_t$ and $G_t^1 = Df^s_t$. Define $(H\circ G)_t^r$ as follows.

	$$
	(H\circ G)_t^r
	=
	\left\{
	\begin{array}{ll}
	G_t^{2r}  & \mbox{for } 0 \leq r \leq \frac{1}{2}  \\
	H_t^{2r-1} & \mbox{for } \frac{1}{2} \leq r \leq 1
	\end{array}
	\right . 
	$$

	Thus, $(H\circ G)_t^r$ gives a continuous family of formal homotopy between $De_t$ and $F_t$. Moreover, we can choose the formal homotopy to vary smoothly with $t$. So, we have a family of embeddings $e_t$ satisfying the cohomology conditions for symplectic forms and $F_t$ is a family of isosymplectic homomorphisms covering $e_t$ such that $De_t$ is formally homotopic to $F_t$ and the homotopy depends continuously on $t$. Thus, by the parametric relative version of Theorem \ref{gromov h-principle embedding}, we can isotope each $e_t$ to an isosymplectic embedding depending continuously on $t$. Therefore, we have shown that $e_0 = f^s$ and $e_1 = f^s \circ \phi$ are isotopic as isosymplectic embeddings. By Lemma \ref{symplectic isotopy lemma}, there is a symplectic isotopy $\Psi_t$ of $(D^{4n},\partial D^{4n},d\lambda_0)$ such that $\Psi_0 = Id$ and $\Psi_1 \circ f^s(V) = f^s \circ \phi (V)$. Hence, $\mathcal{O}b(V^{2n},d\lambda_V,\phi)$ admits a contact open book embedding in $\mathcal{O}b(D^{4n},d\lambda_0,id) = (S^{4n+1},\xi_{std})$.
	
\end{proof}

\section{Some applications of Theorem \ref{contact open book thm}}

Theorem \ref{contact open book thm} says that if one can find a simply connected Weinstein domain $(W^{2n},d\lambda_W)$ that admits proper symplectic embedding in $(D^{4n},d\lambda_0)$, then $\mathcal{O}b(W^{2n},d\lambda_W,\phi)$ contact open book embeds in $\mathcal{O}b(D^{4n},d\lambda_0,id)$. Therefore, to apply Theorem \ref{contact open book thm} we want to find a sizable class of simply connected $2n$-Weinstein domains that admit proper symplectic embedding in the standard $4n$-ball.

\begin{proof}[Proof of corollary \ref{corollary1}]
	
	By Cieliebak \cite{C}, any sub-critical Weinstein domain is split. Thus, $(W^{2n},d\lambda_W)$ is symplectomorphic to some $(W_0^{2n-2} \times D^2, d\lambda_{W_0} \bigoplus dx \wedge dy)$, where $(W_0^{2n_2}, d\lambda_{W_0})$ is a Weinstein domain. Now, by Gromov's h-principle for isosymplectic embedding, $(W_0^{2n-2}, d\lambda_{W_0})$ has a proper isosymplectic embedding in $(D^{4n-2}, d\lambda^{2n-1}_0)$. Therefore, $(W^{2n},d\lambda_W) = (W_0^{2n-2} \times D^2, d\lambda_{W_0} \bigoplus dx \wedge dy)$ has a proper isosymplectic embedding in $(D^{4n}, d\lambda^{2n}_0)$. Hence, by Theorem \ref{contact open book thm} $\mathcal{O}b(W^{2n},d\lambda_W,\phi_W)$ contact open book embeds in $\mathcal{O}b(D^{4n},d\lambda_0,id)$.
	
\end{proof}

By Gromov's $h$-principle for symplectic immersion, there exists a proper isosymplectic immersion $f^s$ of $(V^{2n}, d\lambda_V)$ in $(D^{4n}, d\lambda_0)$. Moreover, any such immersion will have only finitely many intersection points. By Whitney \cite{Wh}, this intersection number $I(f^s) = \frac{1}{2} \langle e_r(\nu(f^s)),[V]\rangle$. Here, $\nu(f^s)$ denotes the normal bundle of immersion, $e_r(\nu(f^s)) \in H^{2n}(V,\partial V ; \mathbb{Z})$ denotes the relative Euler class of $\nu(f^s)$, and $[V]$ is the fundamental class. Since an isosymplectic immersion induces a complex structure on the normal bundle, $e(\nu(f^s))$ is same as the relative Chern class $c^r_n(\nu(f^s))$. Therefore, $I(f^s) = \frac{1}{2} \langle c^r_n(\nu(f^s)),[V]\rangle$. Moreover, by Whitney \cite{Wh}, $f^s$ is regularly homotopic to an embedding if and only if $I(f^s) = 0$. Therefore, by Theorem \ref{gromov h-principle embedding} we have the following.

\begin{lemma}\label{intersection number}
	Let $g : (V^{2n}, d\lambda_V) \rightarrow (\mathbb{D}^{4n}, d\lambda_0)$ be a proper isosymplectic immersion with double points in the interior of $V^{2n}$. Then, the intersection number $ I(g) = \frac{1}{2} \langle c^r_n(\nu(g)),[V]\rangle$. Moreover, $I(g) = 0$ if and only if $g$ can be isotoped to an isosymplectic embedding.  
\end{lemma}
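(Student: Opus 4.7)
The plan splits naturally into two independent parts: a computation of the self-intersection number via classical characteristic-class methods, and an upgrade from smooth regular homotopy to isosymplectic isotopy via the parametric/relative $h$-principle of Theorem \ref{gromov h-principle embedding}.

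For the formula, I would invoke Whitney's self-intersection count \cite{Wh}: for a proper generic immersion $g:(V^{2n},\partial V) \rightarrow (\mathbb{D}^{4n},\partial\mathbb{D}^{4n})$ whose only singularities are transverse double points in the interior, the algebraic count $I(g)$ of self-intersections equals $\tfrac{1}{2}\langle e_r(\nu(g)),[V]\rangle$, where $e_r(\nu(g))\in H^{2n}(V,\partial V;\mathbb{Z})$ is the relative Euler class of the normal bundle. Since $g$ is isosymplectic, the normal bundle $\nu(g)$ inherits a fibrewise symplectic structure as the $d\lambda_0$-symplectic complement of $Dg(TV)$; choosing a (up to homotopy unique) compatible complex structure identifies the real relative Euler class with the top relative Chern class $c_n^r(\nu(g))$, yielding the stated formula.

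For the ``moreover'' part, the ``only if'' direction is immediate: if $g$ is isotopic through isosymplectic embeddings to a genuine embedding $\tilde g$, then $I(\tilde g)=0$, and $I$ is invariant under regular homotopy. Conversely, assume $I(g)=0$. By Whitney's theorem, $g$ is smoothly regularly homotopic, through proper immersions fixed near $\partial V$, to a smooth embedding $h$. This smooth regular homotopy can be covered by a path of bundle monomorphisms $F_t:TV\to T\mathbb{D}^{4n}$ with $F_0=Dg$ isosymplectic. Since the space of compatible complex (equivalently symplectic) structures on a given linear monomorphism is contractible, the path $F_t$ may be deformed, rel $F_0$, to a path through isosymplectic homomorphisms. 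With this formal data and since $\dim V = 2n \leq 4n-2$ and $V$ has nonempty boundary, the parametric relative form of Theorem \ref{gromov h-principle embedding} produces an isotopy from $g$ to a proper isosymplectic embedding.

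The main technical point I anticipate is producing the family of isosymplectic homomorphisms covering Whitney's regular homotopy in a way that is compatible with the boundary condition; this is where the symplectic-versus-smooth distinction enters, and it rests on the parametric contractibility of the forgetful map from isosymplectic monomorphisms to smooth monomorphisms, carried out relative to a collar of $\partial V$. Once this formal data is arranged, the rest follows formally from Whitney and Gromov.
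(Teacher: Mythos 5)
Your proposal is correct and follows essentially the same route as the paper: Whitney's double-point formula identifies $I(g)$ with half the relative Euler class pairing, the complex structure induced on $\nu(g)$ by the isosymplectic condition converts this to $c^r_n(\nu(g))$, and the ``moreover'' part combines Whitney's criterion for regular homotopy to an embedding with Gromov's $h$-principle for isosymplectic embeddings (Theorem \ref{gromov h-principle embedding}). The only difference is that you spell out the construction of the covering homotopy of isosymplectic homomorphisms needed as formal data for the $h$-principle, a step the paper leaves implicit.
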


Recall that if $(E,F,\pi)$ is an $n$-dimensional complex vector bundle, then its $n^{th}$-Segre class $s_n(E)$ is the $n^{th}$ order term in the expansion of $(1 + c_1(E) + c_2(E) +...+ c_n(E))^{-1}$. We then show the following.

\begin{lemma} \label{symp embed lemma}
	
	Let $(V^{2n},d\lambda_V)$ be a Weinstein domain with a compatible almost complex structure $J_V$. Assume that $(V^{2n},d\lambda_V)$ satisfies the following properties.
	
	\begin{enumerate}
		
		\item The restriction of $(TV,J_V)$ on $\partial V$ is trivial as a complex vector bundle.
		
		\item The $n^{th}$ relative Segre class of $(V^{2n},\partial V)$, $s^r_n(V,\partial V)$, vanishes.
		
	\end{enumerate}
	
	Then $(V^{2n},d\lambda_V)$ admits a proper isosymplectic embedding in $(\mathbb{D}^{4n},d\lambda_0)$, for $n \geq 3$.
	
\end{lemma}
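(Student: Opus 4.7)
The plan is to construct the embedding in two stages: first produce a proper isosymplectic \emph{immersion} $f^s : (V^{2n},\partial V^{2n},d\lambda_V) \to (\mathbb{D}^{4n},\partial\mathbb{D}^{4n},d\lambda_0)$ via Gromov's $h$-principle, and then use the hypotheses to show that the Whitney intersection number $I(f^s)$ vanishes, so that Lemma \ref{intersection number} isotopes $f^s$ to a proper isosymplectic embedding.

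For the first stage, I would verify the formal data required by Theorem \ref{gromov h-principle immersion} (in its relative form, as in \cite{EM}). The cohomological condition $f_0^{*}[\omega_0]=[\omega_V]$ is automatic since both forms are exact and $\mathbb{D}^{4n}$ is contractible. The obstructions to the existence of an isosymplectic homomorphism $TV \to T\mathbb{D}^{4n}$ lie in the groups $H^{i}(V,\partial V;\pi_{i-1}(St^{\mathbb{C}}(2n,n)))$ for $1\leq i\leq 2n$, and these all vanish because $St^{\mathbb{C}}(2n,n)=U(2n)/U(n)$ is $2n$-connected. Applying the $h$-principle then yields a proper isosymplectic immersion $f^s$; after a small perturbation its double points lie in the interior, since the dimension count $\dim\partial V + \dim\partial V = 4n-2 < 4n-1 = \dim\partial\mathbb{D}^{4n}$ makes boundary self-intersections generically empty.

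For the second stage, the isomorphism $f^{s*}T\mathbb{D}^{4n} \cong TV \oplus \nu(f^s)$ of complex vector bundles, together with the triviality of $T\mathbb{D}^{4n}$, gives $c(TV)\cdot c(\nu(f^s)) = 1$ by the Whitney product formula, hence $c(\nu(f^s)) = s(TV)$ at the level of total classes. Hypothesis (1) supplies a trivialization of $(TV,J_V)|_{\partial V}$, and together with the pulled-back trivialization of $f^{s*}T\mathbb{D}^{4n}|_{\partial V}$ this induces a compatible complex trivialization of $\nu(f^s)|_{\partial V}$ (which is stably trivial of rank $n$ over $\partial V$, and is trivializable in this range). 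With these framings the relative Chern class $c_n^r(\nu(f^s)) \in H^{2n}(V,\partial V;\mathbb{Z})$ equals $s_n^r(V,\partial V)$, which is zero by hypothesis (2). Lemma \ref{intersection number} then gives $I(f^s) = \tfrac{1}{2}\la c_n^r(\nu(f^s)),[V]\ra = 0$, so $f^s$ isotopes to a proper isosymplectic embedding.

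The main technical obstacle lies in the relative framing step: one must check that the trivialization of $TV|_{\partial V}$ furnished by hypothesis (1) is genuinely compatible with the framing of $\nu(f^s)|_{\partial V}$ used to define $s_n^r(V,\partial V)$, so that the Whitney product formula makes sense in $H^{*}(V,\partial V;\mathbb{Z})$ rather than only in $H^{*}(V;\mathbb{Z})$. The role of the dimension hypothesis $n\geq 3$ is to ensure that the Whitney double-point cancellation used inside Lemma \ref{intersection number} applies (the source is $2n\geq 6$-dimensional), and the $h$-principle for embeddings in Theorem \ref{gromov h-principle embedding} yields the isosymplectic isotopy; in dimension $2n=4$ the Whitney trick fails and the argument breaks down.
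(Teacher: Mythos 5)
Your proposal is correct and follows essentially the same route as the paper: obtain a proper isosymplectic immersion from Gromov's $h$-principle, use the relative Whitney sum formula with the boundary trivialization from hypothesis (1) to identify $c_n^r(\nu(f^s))$ with $s_n^r(V,\partial V)$, and conclude via the vanishing of the Whitney intersection number that the immersion isotopes to an embedding. Your added verifications (connectivity of $St^{\mathbb{C}}(2n,n)$, pushing double points to the interior) are consistent with the paper; only your closing speculation that $n\geq 3$ is needed for the Whitney trick is off (the source is $2n$-dimensional, so $n\geq 2$ would already suffice for that step), but this does not affect the argument.
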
	

The proof is an application of a version of the Whitney sum formula for relative characteristic classes. It was first developed by Kervaire in \cite{Ker}.

\begin{proof}[Proof of Lemma \ref{symp embed lemma}:]

	Let $f_s$ be a proper isosymplectic immersion of $(V^{2n},d\lambda_V)$ in $(\mathbb{D}^{4n},d\lambda_0)$. Note that the $i$-th Chern class is a characteristic class in the group $H^{2i}(V^{2n}; \pi_{2i-1}St^\mathbb{C}(n,n-i+1))$, which is an obstruction to the existence of a section of the associated $St^\mathbb{C}(n,n-i+1)$-bundle of $(TV,J_V)$. Since the restriction of $(TV,J_V)$ on $\partial V$ is a trivial complex bundle, their is no obstruction to the existence of a section of that associated $St^\mathbb{C}(n,n-i+1)$-bundle over $\partial V$. Therefore, we can talk about relative Chern classes of $(TV,J_V)$. Moreover, all the relative Chern classes of $(T\mathbb{D}^{4n},d\lambda_0,J_0)$ vanish. Therefore, the relative version of the Whitney sum formula gives the following.  $$(1 + c^r_1(V,\partial V) + \dots + c^r_n(V, \partial V))(1 + c^r_1(\nu(f_s)) + \dots + c^r_n(\nu(f_s))) = 1 \implies c^r_n(\nu(f_s)) = s^r_n(V,\partial V) = 0.$$
	
	\noindent Thus, by Lemma \ref{intersection number}, $f_s$ can be isotoped to a proper isosymplectic embedding of $(V,d\lambda_V)$ in $(\mathbb{D}^{4n},d\lambda_0).$
	
\end{proof}

\begin{proof}[Proof of Corollary \ref{corollary2}]
	
	Recall that a Weinstein domain has the homotopy type of an $n$-complex. The obstruction to finding a complex trivializtion of $(TW,d\lambda_W,J_W)$ lies in $H^i(W^{2n};\pi_{i-1}U(n)) = H^i(W;\pi_{i-1}U)$ ($n \geq 3 \implies n < 2n-2$), for $1 \leq i \leq n$. By Bott periodicity, $\pi_kU = 0$ for $k$ even and there are no nonzero even cells. Thus, $(TW,d\lambda_W,J_W)$ is trivial and the conditions of Lemma \ref{symp embed lemma} are satisfied. Thus, $(W^{2n},d\lambda_W)$ admits proper isosymplectic embedding in the standard symplectic $4n$-ball. Hence, by Theorem \ref{contact open book thm}, $\mathcal{O}b(W^{2n},d\lambda_W,\phi)$ open book embeds in $\mathcal{O}b(\mathbb{D}^{4n},d\lambda_0, id)$ for every relative symplectomorphism $\phi$ of $(W,d\lambda_W)$.  
	
\end{proof}

\begin{remark}
	In the proof of Corollary \ref{corollary2}, the main point was to show the triviality of the almost complex structure on the page. Therefore, similar result will hold true for any complex parallelizable Weinstein domain as page.
\end{remark}

\end{document}